\title{Operators coming from ring schemes}
\author[J. GOGOLOK]{Jakub Gogolok$^{\dagger}$}
\thanks{$^{\dagger}$ Supported by the Narodowe Centrum Nauki grant no. 2018/31/B/ST1/00357.}
\address{$^{\spadesuit}$Instytut Matematyczny\\
Uniwersytet Wroc{\l}awski\\
pl. Grunwaldzki 2, 50-384
\\
Wroc{\l}aw\\
Poland}
\email{jakub.gogolok@math.uni.wroc.pl} \urladdr{http://www.math.uni.wroc.pl/\textasciitilde gogolok/}
\author[P. KOWALSKI]{Piotr Kowalski$^{\spadesuit}$}
\thanks{$^{\spadesuit}$ Supported by the Narodowe Centrum Nauki grant no. 2018/31/B/ST1/00357 and by the T\"{u}bitak 1001 grant no. 119F397.}
\address{$^{\spadesuit}$Instytut Matematyczny\\
Uniwersytet Wroc{\l}awski\\
pl. Grunwaldzki 2, 50-384
\\
Wroc{\l}aw\\
Poland}
\email{pkowa@math.uni.wroc.pl} \urladdr{http://www.math.uni.wroc.pl/\textasciitilde pkowa/ }
\thanks{2020 \textit{Mathematics Subject Classification} Primary 03C60; Secondary 12H05, 03C45, 14L15.}
\thanks{\textit{Key words and phrases}. Ring scheme, operator, derivation, model companion.}
\DeclareMathOperator{\locus}{locus}
\DeclareMathOperator{\acl}{acl} \DeclareMathOperator{\dcl}{dcl} \DeclareMathOperator{\scf}{SCF}
 \DeclareMathOperator{\aut}{Aut} \DeclareMathOperator{\id}{id}
 \DeclareMathOperator{\fr}{Fr}
 \DeclareMathOperator{\alg}{alg}
\DeclareMathOperator{\tp}{tp}
\DeclareMathOperator{\spec}{Spec}\DeclareMathOperator{\rat}{rat}
\DeclareMathOperator{\Alg}{Alg}\DeclareMathOperator{\Hom}{Hom}
\DeclareMathOperator{\cf}{CF}
\newtheorem{theorem}{Theorem}[section]
\newtheorem{prop}[theorem]{Proposition}
\newtheorem{lemma}[theorem]{Lemma}
\newtheorem{cor}[theorem]{Corollary}
\theoremstyle{definition}
\newtheorem{definition}[theorem]{Definition}
\newtheorem{example}[theorem]{Example}
\newtheorem{remark}[theorem]{Remark}
\newtheorem{question}[theorem]{Question}
\newtheorem{notation}[theorem]{Notation}
\newtheorem{assumption}[theorem]{Assumption}
\begin{document}
\newcommand{\lili}{\underleftarrow{\lim }}
\newcommand{\coco}{\underrightarrow{\lim }}
\newcommand{\twoc}[3]{ {#1} \choose {{#2}|{#3}}}
\newcommand{\thrc}[4]{ {#1} \choose {{#2}|{#3}|{#4}}}
\newcommand{\Zz}{{\mathds{Z}}}
\newcommand{\Ff}{{\mathds{F}}}
\newcommand{\Cc}{{\mathds{C}}}
\newcommand{\Rr}{{\mathds{R}}}
\newcommand{\Nn}{{\mathds{N}}}
\newcommand{\Qq}{{\mathds{Q}}}
\newcommand{\Kk}{{\mathds{K}}}
\newcommand{\Pp}{{\mathds{P}}}
\newcommand{\ddd}{\mathrm{d}}
\newcommand{\Aa}{\mathds{A}}
\newcommand{\dlog}{\mathrm{ld}}
\newcommand{\ga}{\mathbb{G}_{\rm{a}}}
\newcommand{\gm}{\mathbb{G}_{\rm{m}}}
\newcommand{\gaf}{\widehat{\mathbb{G}}_{\rm{a}}}
\newcommand{\gmf}{\widehat{\mathbb{G}}_{\rm{m}}}
\newcommand{\ka}{{\bf k}}
\newcommand{\ot}{\otimes}
\newcommand{\si}{\mbox{$\sigma$}}
\newcommand{\ks}{\mbox{$({\bf k},\sigma)$}}
\newcommand{\kg}{\mbox{${\bf k}[G]$}}
\newcommand{\ksg}{\mbox{$({\bf k}[G],\sigma)$}}
\newcommand{\ksgs}{\mbox{${\bf k}[G,\sigma_G]$}}
\newcommand{\cks}{\mbox{$\mathrm{Mod}_{({A},\sigma_A)}$}}
\newcommand{\ckg}{\mbox{$\mathrm{Mod}_{{\bf k}[G]}$}}
\newcommand{\cksg}{\mbox{$\mathrm{Mod}_{({A}[G],\sigma_A)}$}}
\newcommand{\cksgs}{\mbox{$\mathrm{Mod}_{({A}[G],\sigma_G)}$}}
\newcommand{\crats}{\mbox{$\mathrm{Mod}^{\rat}_{(\mathbf{G},\sigma_{\mathbf{G}})}$}}
\newcommand{\crat}{\mbox{$\mathrm{Mod}^{\rat}_{\mathbf{G}}$}}
\newcommand{\cratinv}{\mbox{$\mathrm{Mod}^{\rat}_{\mathbb{G}}$}}
\newcommand{\ra}{\longrightarrow}
\newcommand{\bdcf}{\mathcal{B}-\cf}

\def\Ind#1#2{#1\setbox0=\hbox{$#1x$}\kern\wd0\hbox to 0pt{\hss$#1\mid$\hss}
\lower.9\ht0\hbox to 0pt{\hss$#1\smile$\hss}\kern\wd0}

\def\ind{\mathop{\mathpalette\Ind{}}}

\begin{abstract}
We introduce the notion of a \emph{coordinate $\ka$-algebra scheme} and the corresponding notion of a \emph{$\mathcal{B}$-operator}. This class of operators includes endomorphisms and derivations of the Frobenius map, and it generalizes the operators related to $\mathcal{D}$-rings from \cite{MS2} as well. We classify the (coordinate) $\ka$-algebra schemes for a perfect field $\ka$ and we also discuss the model-theoretic properties of fields with $\mathcal{B}$-operators.
\end{abstract}

\maketitle

\section{Introduction}
In this paper, we study \emph{operators} of a certain kind on rings and fields. There are many versions of this notion in the literature, we list several of them below.
\begin{enumerate}
\item Bia{\l}ynicki-Birula's notion of \emph{fields with operators}, which are fields together with a family of automorphisms and derivations \cite{BBop}.

\item Buium's notion of \emph{jet operators}, which are functions on rings satisfying additive and multiplicative laws given by polynomials \cite{Bui3}.

\item The notion of \emph{$\mathcal{D}$-rings} introduced by Moosa and Scanlon \cite{MS2} and the equivalent notion considered by Kamensky in \cite{Kam}.

\item Hardouin's notion of \emph{iterative $q$-difference operators} see \cite{Hard}.

\end{enumerate}
The situations considered in Items $(2)$ and $(3)$ above jointly generalize the set-up from Item $(1)$ (actually, by \cite{K4}, the ``intersection'' of $(2)$ and $(3)$ is exactly $(1)$).
We introduce here a certain class of operators, which was invented in an attempt of bringing into a common framework the set-ups from Items $(2)$ and $(3)$ above. A more concrete motivation was to generalize the definition from \cite{MS2} to a case including derivations of Frobenius and to find out whether the model-theoretic results from \cite{K2}, \cite{MS2}, and \cite{BHKK} still hold in this wider context (such a possibility was alluded to in \cite[Section 5.3]{BHKK}). We explain it in a more detailed way below.

We start from the following example.
If $\partial$ is a derivation on a field $K$ of positive characteristic $p$, then $\partial':=\fr_K\circ \partial$ is a \emph{derivation of Frobenius}, i.e. $\partial'$ is additive and satisfies the following ``twisted Leibniz rule'':
$$\partial'(xy)=x^p\partial'(y)+y^p\partial'(y).$$
As it was discussed by the second author in \cite{K2}, derivations of Frobenius are usually \emph{not} of the form $\fr_K\circ \partial$ and they have a reasonable model theory. However, such operators do not fit to the set-up from \cite{MS2} (it is briefly explained in \cite[Section 5.3]{BHKK}), but they are examples of jet operators from \cite{Bui3}. More generally (we use here the definitions from \cite{BHKK}, which are equivalent to the ones from \cite{MS2}), if we replace ``derivations'' with ``$B$-operators'', where $B$ is an arbitrary finite $\Ff_p$-algebra, then an appropriate composition of a $B$-operator and the Frobenius map should be considered as a ``twisted version'' of a $B$-operator. Such operators neither fit to the set-up from \cite{MS2} nor they are the jet operators from \cite{Bui3}.

The aim of this paper is to introduce a natural new class of operators (we call them \emph{$\mathcal{B}$-operators}), which satisfies the following properties:
\begin{itemize}
  \item it includes the operators coming from $\mathcal{D}$-rings considered in \cite{MS2};

  \item it includes jet operators  from \cite{Bui3} (at least on fields);

  \item it covers derivations of Frobenius and (more generally) twisted $B$-operators mentioned above;

  \item it allows a model-theoretic analysis similar as in \cite{K2}, \cite{MS2}, and \cite{BHKK}.
\end{itemize}
It is a good moment to remark that in the case of characteristic $0$, our set-up of $\mathcal{B}$-operators coincides with the set-up from \cite{MS2} (see Corollary \ref{charac0}). Therefore, all the new interesting phenomena related to $\mathcal{B}$-operators take place in the case of positive characteristic.

This paper has two aspects: the algebraic one and the model-theoretic one, which is exemplified by the following two main results.
\begin{enumerate}
\item[(i)] A classification of $\ka$-algebra schemes for a perfect field $\ka$ (Theorem \ref{classop}) extending the two-dimensional classification from \cite{K4}. This result yields a comparison (given by a sequence of Frobenius maps) between arbitrary $\ka$-algebra schemes and the $\ka$-algebra schemes related to $\mathcal{D}$-rings from \cite{MS2}.

\item[(ii)] A criterion for $\mathcal{B}$-operators (following the one from \cite{BHKK}) implying the existence of a model companion of the theory of fields with $\mathcal{B}$-operators  (Theorem \ref{supmain}) as well as its model-theoretic description (Section \ref{secstable}).
\end{enumerate}
We also extend some of the technical framework (the theory of prolongations) considered in \cite{MS2} into our new class of operators,
which requires generalizing the classical notion of the Weil restriction (in the affine case).

The paper is organized as follows. In Section \ref{secrings}, we collect the necessary results about $\ka$-algebra schemes and we place the ring schemes used in \cite{MS2} in our more general context as well. In Section \ref{secoper}, we describe our setting for $\mathcal{B}$-operators using the class of ring schemes, which was specified in Section \ref{secrings}. In Section \ref{secprol}, we show that the prolongations still exist in this new set-up and that they have good geometric properties. In Section \ref{secmt}, we analyze the  model theory of fields with $\mathcal{B}$-operators. To show the existence results, we use the prolongations from Section \ref{secprol}. We discuss some issues related to the negative results in Section \ref{secneg} and we finish with the quantifier elimination and stability results  (Section \ref{secstable}).

We would like to thank the referee for a careful reading of our paper and
many useful suggestions.

\section{Ring schemes}\label{secrings}
All rings considered in this paper are commutative and with $1$, except the ring of skew-polynomials which is always explicitly mentioned. Let $R$ be a ring, $I$ be an ideal in $R$, and $n$ be a positive integer. By $I^n$, we denote the $n$-fold product of $I$ with itself (product of ideals). If $R$ is of prime characteristic $p$, then $\fr_R$ denotes the Frobenius endomorphism:
$$\fr_R:R\ra R,\ \ \ \ \ \ \fr_R(r)=r^p,$$
and $\fr_R(I)$ is the image of the ideal $I$ by this endomorphism. However, instead of $\fr_R(R)$, we often write $R^p$. For any set $X$, $X^{\times n}$ denotes the $n$-th Cartesian power of $X$ (similarly for products of schemes, however, we still write ``$\Aa_{\ka}^n,\ga^n$'' rather than  ``$\Aa_{\ka}^{\times n},\ga^{\times n}$'').
We fix a base ring $\ka$, which will usually be a field (often of positive characteristic).

In this section, we define certain functors ``governing'' the class of operators, which will be introduced in the next section.
We also prove a classification result (Theorem \ref{classop}) extending the main theorem of \cite{K4}.

\subsection{Our categorical set-up}
We will consider $\ka$-algebra schemes with some extra data. The notion of a $\ka$-algebra scheme is natural, but it does not seem to
be very well established. For possible references, we could only find \cite[page 148]{ShGal}
and \cite[Section 3]{MS2} (it is called an ``$\mathbb{S}$-algebra scheme'' in \cite{MS2}).
Therefore, we will recall this notion below.

We start from the notion of an affine \emph{ring scheme} over $\ka$, which is a representable functor from the category of $\ka$-algebras, denoted $\mathrm{Alg}_{\ka}$, to the category of rings, see e.g. \cite[Lecture 26]{mumlectcur}. Since most of the time we assume that $\ka$ is a field and we consider ring schemes of finite type over $\ka$ (as schemes), we could also use the term ``algebraic ring'' as in \cite{algring}.
\begin{definition}\label{defs}
\begin{enumerate}
\item We denote the identity functor on the category $\mathrm{Alg}_{\ka}$ by:
$$\mathbb{S}_{\ka}=\id:\mathrm{Alg}_{\ka}\ra \mathrm{Alg}_{\ka},\ \ \ \ \mathbb{S}_{\ka}(R)=R.$$
Since $\mathbb{S}_{\ka}$ is represented by the affine line $\Aa^1_{\ka}$, it is a ring scheme over $\ka$.

\item By a \emph{$\ka$-algebra scheme}, we mean a morphism
$$\iota:\mathbb{S}_{\ka}\ra \mathcal{B}$$
of ring schemes over $\ka$. As in the case of $\ka$-algebras, we often say ``$\mathcal{B}$ is a $\ka$-algebra scheme'' or  ``$(\mathcal{B},\iota)$ is a $\ka$-algebra scheme''.
\end{enumerate}
\end{definition}
\begin{remark}\label{ralg}
Let $(\mathcal{B},\iota)$ be a $\ka$-algebra scheme and $R$ be a $\ka$-algebra.
\begin{enumerate}
\item The structure map evaluated on $R$:
$$\iota_R:\mathbb{S}_{\ka}(R)=R\ra \mathcal{B}(R)$$
gives $\mathcal{B}(R)$ the structure of an $R$-algebra.

\item We denote the extension of scalars from $\ka$ to $R$ in the following way:
$$\mathcal{B}_R:=\mathcal{B}\times_{\spec(\ka)}\spec(R).$$
Clearly, $\mathcal{B}_R$ has a natural structure of an $R$-algebra scheme.

\item There are ring schemes over a field $\ka$, which do not have a $\ka$-algebra scheme structure. The main example is the ring scheme of \emph{n-truncated  Witt vectors} $W_n$ for $n>1$ (see e.g. \cite[Lecture 26, Section 2]{mumlectcur}).

\end{enumerate}
\end{remark}
\begin{example}\label{example1}
Any finite free $\ka$-algebra $B$ yields a $\ka$-algebra scheme, which we denote by $B_{\otimes}$, in the following way:
$$B_{\otimes}:\mathrm{Alg}_{\ka}\ra \mathrm{Alg}_{\ka},\ \ \ \ B_{\otimes}(R)=R\otimes_{\ka}B$$
(see \cite[Remark 2.3]{MS}). In particular, we have $\ka_{\otimes}=\mathbb{S}_{\ka}$.
\end{example}
We state below an important result about the additive group scheme of a $\ka$-algebra scheme. If $\ka$ is an algebraically closed field, then this result appears in \cite{algring} and it can be easily transferred to the case of a  perfect field $\ka$, using for example the theory described in \cite{sprinlin}. However, in the case of an arbitrary base field $\ka$, we could not find such a result in the literature.
\begin{prop}\label{additive}
We assume that $\ka$ is a field and $\mathcal{B}$ is a $\ka$-algebra scheme, which is of finite type and connected (as a scheme over $\ka$). Then, for some positive integer $e$, we have the following isomorphism of group schemes over $\ka$:
$$(\mathcal{B},+)\cong \ga^e.$$
\end{prop}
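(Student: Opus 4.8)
The plan is to exploit the \emph{ring} structure on $\mathcal{B}$, which endows the additive group $(\mathcal{B},+)$ with a scalar multiplication, and to read off the desired isomorphism from the grading it induces on the coordinate ring. First I would record the general shape of the object: since a ring scheme is by definition representable, $\mathcal{B}=\spec A$ for a finitely generated $\ka$-algebra $A$ (finite type is assumed), so $(\mathcal{B},+)$ is a connected commutative affine algebraic group over $\ka$; as it carries the identity section $0$, connectedness forces geometric connectedness, which will let me base change freely.

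Next I would bring in the extra structure. The $\ka$-algebra scheme structure gives, for every $\ka$-algebra $R$, an $R$-module structure on $\mathcal{B}(R)$, that is, a scalar multiplication $s\colon \Aa^1_{\ka}\times\mathcal{B}\to\mathcal{B}$ which is an action of the multiplicative monoid $(\Aa^1_{\ka},\cdot)$ on $(\mathcal{B},+)$ by group endomorphisms, with $s(1,-)=\id$ and $s(0,-)=0$. Restricting to $\gm$ yields a linear action of $\gm$ on $(\mathcal{B},+)$, hence a grading $A=\bigoplus_{n\ge 0}A_n$ (nonnegative because the action extends to the monoid $\Aa^1_{\ka}$). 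The identity $s(0,-)=0$ forces $A_0=\ka$, and the $\gm$-equivariance of $+$ (a consequence of $c(x+y)=cx+cy$) makes the comultiplication a morphism of graded algebras, so that the counit axioms force every element of $A_1$ to be primitive. Choosing a $\ka$-basis $a_1,\dots,a_e$ of the finite-dimensional space $A_1$, where $e=\dim_\ka A_1$, therefore defines a \emph{$\ka$-rational} homomorphism of group schemes $$\phi\colon \mathcal{B}\longrightarrow \ga^e,\qquad \phi^*(y_i)=a_i,$$ which is moreover $\gm$-equivariant for the standard (weight one) action on $\ga^e$.

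It then remains to prove that $\phi$ is an isomorphism, and here I would reduce to the known case. Being an isomorphism is local for the faithfully flat base change $\ka\to\bar{\ka}$, so it suffices to treat $\phi_{\bar{\ka}}$. Since the grading and the primitive elements are compatible with base change, $\phi_{\bar{\ka}}$ is the morphism produced by the very same canonical recipe over $\bar{\ka}$ from $A_1\otimes_\ka\bar{\ka}$. Over the algebraically closed field $\bar{\ka}$ the classification of \cite{algring} applies (one may also pass through the perfect case first by means of \cite{sprinlin}) and shows that this canonical homomorphism is an isomorphism; hence so is $\phi$, and $(\mathcal{B},+)\cong\ga^e$.

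The main obstacle is precisely this last step: promoting the classification over an algebraically closed (or perfect) base to an \emph{arbitrary} field $\ka$, the case not covered in the literature. Two points need care. First, the entire construction of $\phi$ must be visibly $\ka$-rational and functorial, so that its base change \emph{is} the known isomorphism rather than merely isomorphic to it; this is exactly what the grading-plus-primitives description guarantees, and it is the reason I build a canonical map instead of quoting an abstract isomorphism over $\bar{\ka}$ and attempting to descend it. Second, the genuinely nontrivial input absorbed into \cite{algring} is that $A$ is generated in degree one (equivalently $\mathfrak{m}/\mathfrak{m}^2=A_1$ for the attractive $\gm$-action); this is where the multiplicative unit $1\in\mathcal{B}$ and the ring multiplication are indispensable, since they exclude the non-reduced homothety modules such as $\alpha_p$, which is a module scheme over $\mathbb{S}_{\ka}$ but is not a ring scheme (it has no unit) and whose additive group is not $\ga$.
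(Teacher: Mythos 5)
There is a genuine gap, and it sits exactly at the step you yourself flag as delicate: the claim that $A$ is generated in degree one (equivalently, that $\dim_{\ka}A_1$ equals the dimension of $\mathcal{B}$), which you propose to absorb into \cite{algring}. In positive characteristic this claim is simply false, and \cite{algring} cannot supply it because it already fails over $\overline{\Ff}_p$. The scalar multiplication coming from $\iota$ need not act with weight one on all of $(\mathcal{B},+)$: take the paper's motivating Example \ref{firstex}(2), where $\mathcal{B}=\Aa^2_{\ka}$ with $(x,x')\cdot(y,y')=\left(xy,x^py'+y^px'\right)$ and $\iota(c)=(c,0)$. Then scalar multiplication is $c\cdot(y,y')=\left(cy,c^py'\right)$, so on $A=\ka[u,v]$ the grading induced by the $\gm$-action gives $u$ weight $1$ but $v$ weight $p$. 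Hence $A_1=\ka u$ is one-dimensional, and your canonical map $\phi$ is just the projection $\pi\colon\mathcal{B}\to\ga$, which is not an isomorphism even though $(\mathcal{B},+)\cong\ga^2$; since this $\mathcal{B}$ is defined over $\Ff_p$, base change to $\bar{\ka}$ does not help. Indeed, by Proposition \ref{eqcon}, ``coordinate ring generated in degree one'' (equivalently, standard $\ka$-module scheme structure, equivalently $\mathcal{B}\cong\mathcal{B}(\ka)_{\otimes}$) characterizes precisely the Moosa--Scanlon-type schemes $B_{\otimes}$ --- exactly the special case this proposition is meant to go beyond. The ring structure with unit rules out degenerations like $\alpha_p$, as you note, but it does not rule out weights $p^{n}$; compare Theorem \ref{classop}, where the exponents $n_i$ record exactly this phenomenon. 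Your argument is fine in characteristic $0$, where additivity of the scalar action ($s_{c+c'}=s_c+s_{c'}$) forces all weights to equal $1$, but that is the easy case (cf.\ Corollary \ref{charac0}).

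The paper's proof uses the $\gm$-action in a much weaker, weight-independent way, which is why it survives the example above. In characteristic $p$, each $(\mathcal{B}(R),+)$ is a $\ka$-vector space, hence of exponent $p$, so $(\mathcal{B},+)$ is a connected elementary unipotent group; the $\gm$-action by group scheme automorphisms has $0$ as its only fixed rational point, which are precisely the hypotheses of Springer's splitting criterion \cite[Corollary 14.4.2]{sprinlin}, giving that $(\mathcal{B},+)$ is $\ka$-split; finally, a split connected elementary unipotent group of finite type is isomorphic to $\ga^e$ by \cite[13.34]{milnered}. (In characteristic $0$ the same scheme of proof runs through perfectness of $\ka$ and \cite[Corollary 14.3.10]{sprinlin}, with no use of the algebra structure.) If you wanted to repair your construction along its original lines, you would need to exhibit coordinates of weights $p^{n_1},\dotsc,p^{n_e}$ rather than weight $1$; but producing such coordinates over an arbitrary, possibly imperfect, field is essentially the content of the proposition, so this route does not bypass the appeal to a splitness theorem.
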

\begin{proof}
We consider two cases.
\smallskip
\\
\textbf{Case 1} $\mathrm{char}(\ka)=0$.
\\
In this case, we do not use the $\ka$-algebra scheme assumption, that is assuming that $\mathcal{B}$ is a connected ring scheme of finite type is enough.
By \cite[Corollary 4.5]{algring}, we get that for some positive integer $e$ (see our notation from Remark \ref{ralg}(2)):
$$\left(\mathcal{B}_{\ka^{\alg}},+\right)\cong \ga^e.$$
Therefore, $(\mathcal{B},+)$ is an elementary unipotent group (see \cite[Section 3.4]{sprinlin}). Since $\ka$ is perfect, the elementary unipotent group $(\mathcal{B},+)$ is $\ka$-split (see \cite[Corollary 14.3.10]{sprinlin}). Finally, by \cite[13.34]{milnered}, we obtain that $(\mathcal{B},+)\cong \ga^e$.
\smallskip
\\
\textbf{Case 2}  $\mathrm{char}(\ka)=p>0$.
\\
For any $\ka$-algebra $R$, the group $(\mathcal{B}(R),+)$ has exponent $p$, since (by Remark \ref{ralg}) it has a structure of a vector space over $\ka$. Therefore, $(\mathcal{B},+)$ is an elementary unipotent group again. We use now the $\ka$-algebra scheme structure and obtain that there is an
action of $\gm$ on $(\mathcal{B},+)$ by group scheme automorphisms such that $0$ is the only rational point
 of $(\mathcal{B},+)$ fixed by $\gm$. Since $(\mathcal{B},+)$ is connected, we are exactly in  the situation of the assumptions of
\cite[Corollary 14.4.2]{sprinlin} and we get that $(\mathcal{B},+)$ is $\ka$-split. Therefore, we can conclude the proof as in Case 1 above.
\end{proof}
From now on, for any $\ka$-algebra scheme $\mathcal{B}$ satisfying the assumptions from Proposition \ref{additive}, we just assume that $(\mathcal{B},+)=\ga^e$ (for some positive integer $e$).

Our main definition is below.
\begin{definition}\label{bdef}
A \emph{coordinate $\ka$-algebra scheme} is a $\ka$-algebra scheme $(\mathcal{B},\iota)$ such that:
\begin{enumerate}
\item there is a positive integer $e$ and a fixed isomorphism of group schemes over $\ka$ such that:
$$(\mathcal{B},+)\cong \ga^e,$$
\item there is a fixed morphism of $\ka$-algebra schemes:
$$\pi:\mathcal{B}\ra \mathbb{S}_{\ka}$$
(that is: a morphism of ring schemes satisfying $\pi\circ\iota=\id$) such that under the identification $(\mathcal{B},+)=\ga^e$, $\pi$ is the projection on the first coordinate.
\end{enumerate}
\end{definition}
\begin{remark}
\begin{enumerate}
\item If $\ka$ is a field, then, using Proposition \ref{additive}, we can replace Item $(1)$ in Definition \ref{bdef} with the statement: ``$\mathcal{B}$ is a connected scheme of finite type over $\ka$''.

\item If we make an extra assumption that for each $\ka$-algebra $R$, the $R$-module structure (from Remark \ref{ralg}(1)) on $\mathcal{B}(R)=R^{\times e}$ coincides with the product $R$-module structure (in other words, the isomorphism from Proposition \ref{additive} is an isomorphism of ``$\ka$-module schemes''), then we get exactly ``the basic data'' from \cite[page 5]{MS2}. Therefore, preservation of the scalar multiplication by the isomorphism from Proposition \ref{additive} gives the main dividing line between the approach from \cite{MS2} and the approach here.

\item In the situation from \cite{MS2}, the entire ring scheme data is given by the finite $\ka$-algebra $B:=\mathcal{B}(\ka)$ and the use of ring schemes can be avoided (see again \cite[page 5]{MS2}). It is not the case here, since there is no way to encode the functor $\mathcal{B}$ from Definition \ref{bdef} using only the $\ka$-algebra $\mathcal{B}(\ka)$ (see Example \ref{firstex}(2)). However, if $\ka$ is a perfect field, then the functor $\mathcal{B}$ \emph{is} given by $\mathcal{B}(\ka)$ and a certain sequence of powers of Frobenius maps (see Theorem \ref{classop}).

\item It is a good moment to compare Kamensky's set-up from \cite{Kam} with the ring scheme functor discussed in Items $(1)$ and $(2)$ above. The above comments on a finite $\ka$-algebra $B$ controlling the corresponding functor apply to \cite{Kam} as well. The main difference between \cite{MS2} and \cite{Kam} lies in the choice of the affine scheme over $\ka$ corresponding to the $\ka$-algebra $B$: the finite $\ka$-scheme $\spec(B)$ is considered in \cite{Kam}, in particular, there are no ring schemes in \cite{Kam}.
\end{enumerate}
\end{remark}
\begin{example}\label{firstex}
\begin{enumerate}
\item Assume that $B$ is as in Example \ref{example1} and that we have a $\ka$-algebra map $\pi_B:B\to \ka$. Then, the functor $B_{\otimes}$ from Example \ref{example1} is a coordinate $\ka$-algebra scheme and this is exactly the type of ring schemes, which is considered in \cite{MS2}. In Proposition \ref{eqcon}, we give several equivalent conditions characterizing this type of (coordinate) $\ka$-algebra schemes.

\item We consider now our motivating example, which originates from \cite{K2}. We assume that $\ka=\Ff_p$ and define $\mathcal{B}=\Aa^2_{\ka}$ (as a $\ka$-scheme) with the following ring scheme structure:
$$\ \ \ \ \ \ \ \ \ \ (x,x')+(y,y'):=(x+y,x'+y'),\ \ \ (x,x')\cdot (y,y'):=\left(xy,x^py'+y^px'\right).$$
This ring scheme has the following $\ka$-algebra scheme structure:
$$\iota:\mathbb{S}_{\ka}\ra \mathcal{B},\ \ \ \ \iota(x)=(x,0);$$
and $\pi$ is the projection on the first coordinate making $\mathcal{B}$ a coordinate $\ka$-algebra scheme.

It is clear that $\mathcal{B}$ is not of the form $B_{\otimes}$ for (recall that $\ka=\Ff_p$ here):
$$B:=\mathcal{B}(\ka)\cong \ka[X]/(X^2),$$
since if $R$ is a $\ka$-algebra, then $\mathcal{B}(R)$ need not be isomorphic to $R[X]/(X^2)$ as an $R$-algebra. For example, if $R=K$ is a field of is infinite imperfection degree, then $\dim_K\mathcal{B}(K)$ is infinite as well.

\item Similarly as in Item $(2)$ above, we get a ring scheme structure $\mathcal{B}$ on $\Aa^2_{\ka}$ associated to any jet operator on $\ka$ (see \cite{Bui3} and \cite[Definition 1.1]{K4}). In this case, there is still a morphism $\pi:\mathcal{B}\to \mathbb{S}_{\ka}$, but $\mathcal{B}$ need \emph{not} be a $\ka$-algebra scheme (see \cite[Example(d)]{Bui3}).

    However, if $\ka$ is a field, then by a classification result of the second author \cite[Theorem 2.1]{K4}, we get that $\mathcal{B}$ has a $\ka$-algebra scheme structure $\iota$ compatible with $\pi$, so $\mathcal{B}$ becomes a coordinate $\ka$-algebra scheme.
\end{enumerate}
\end{example}

\subsection{Twist and transport}
Suppose now that we have two ring homomorphisms $\pi:S\to T,\iota:T\to S$ such that $\pi\circ \iota=\id$ and we also have a ring endomorphism $F:T\to T$. Then, we can ``twist'' the ring multiplication on $S$ using $F$, as it is explained below.
\begin{lemma}\label{twist}
We define first the following three maps from $S$ to $S$ ($x\in S$):
$$x^T:=\iota(\pi(x)),\ \ \ \ \ x^{\perp}:=x-x^T,\ \ \ \ \ x^F:=\iota(F(\pi(x))).$$
A new multiplication on $S$ is defined in the following way ($x,y\in S$):
$$x*y := x^Ty^T+x^Fy^{\perp}+x^{\perp}y^F+x^{\perp}y^{\perp}.$$
Then, $(S,+,*)$ is a ring (commutative and with $1$) and the maps $\pi,\iota$ are still ring homomorphisms with respect to the ring $(S,+,*)$.
\end{lemma}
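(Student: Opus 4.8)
The plan is to organize everything around the idempotent ring endomorphism $e := \iota \circ \pi \colon S \to S$, which satisfies $e \circ e = e$ since $\pi \circ \iota = \id$. In the notation of the statement we have $x^T = e(x)$ and $x^\perp = x - e(x)$, so $e$ and $\id - e$ are complementary additive projections and $S = S^T \oplus S^\perp$ as abelian groups, where $S^T := \iota(T) = \im(e)$ and $S^\perp := \ker(e)$. Because $\iota$ is injective (it has the left inverse $\pi$), $\ker(e) = \ker(\pi)$; hence $S^\perp$ is an \emph{ideal} of the original ring $S$, while $S^T$ is a subring that $\iota$ and $\pi$ identify with $T$. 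First I would record three facts: the map $x \mapsto x^F = \iota(F(\pi(x)))$ is additive, lands in $S^T$, and depends only on $x^T$ (as $\pi(x^\perp) = 0$); its restriction to $S^T$ is the ring endomorphism of $S^T$ transported from $F$, so in particular $(ab)^F = a^F b^F$ for $a,b \in S^T$; and $1_S \in S^T$ with $1_S^F = 1_S$, because the ring homomorphisms $\iota, \pi, F$ all preserve $1$.

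Writing $x = a + u$ and $y = b + v$ with $a = x^T, b = y^T \in S^T$ and $u = x^\perp, v = y^\perp \in S^\perp$, the new product takes the compact form
\[
x * y = ab + a^F v + b^F u + uv,
\]
where all products on the right are the original ones; here $ab \in S^T$ while $a^F v, b^F u, uv \in S^\perp$ since $S^\perp$ is an ideal. From this single formula the commutativity of $*$ is immediate from that of the original multiplication, and the bilinearity of $*$ (hence both distributive laws) follows because the projections $x \mapsto a$, $x \mapsto u$ and the map $a \mapsto a^F$ are additive. Taking $y = 1_S$ (so $b = 1_S$, $v = 0$) gives $x * 1_S = a + 1_S^F u = a + u = x$, so $1_S$ is the multiplicative identity for $*$.

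I expect associativity to be the main obstacle, and I would prove it by the direct computation that the component formula makes transparent. The $S^T$-component of $x * y$ is $ab$ and its $S^\perp$-component is $a^F v + b^F u + uv$; feeding this into the formula once more, and using both that $a \mapsto a^F$ is \emph{multiplicative} on $S^T$ and that $S^\perp$ is an ideal (so that all mixed products stay in $S^\perp$), one finds that with $z = c + w$ both $(x*y)*z$ and $x*(y*z)$ expand to the same symmetric sum of eight terms
\[
abc + a^F b^F w + a^F c^F v + b^F c^F u + c^F uv + a^F vw + b^F uw + uvw.
\]
Hence $*$ is associative, and $(S,+,*)$ is a commutative ring with $1$.

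Finally I would check that $\pi$ and $\iota$ remain ring homomorphisms for $*$. For $\pi$ one uses $\pi|_{S^\perp} = 0$: applying $\pi$ to the component formula annihilates the three terms lying in $S^\perp$, leaving $\pi(x * y) = \pi(a)\pi(b) = \pi(x)\pi(y)$, while $\pi$ still preserves $+$ and $1$. For $\iota$ one notes that on $S^T$ the products $*$ and $\cdot$ agree (when $u = v = 0$ the formula reduces to $x * y = ab$), so $\iota(st) = \iota(s)\iota(t) = \iota(s) * \iota(t)$ for $s, t \in T$, and the additivity and unitality of $\iota$ are unaffected.
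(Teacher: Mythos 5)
Your proof is correct and takes essentially the same route as the paper's: both verify the ring axioms directly, with associativity reducing to the same symmetric eight-term expansion, unity following from $1^F=1$ and $1^{\perp}=0$, and multiplicativity of $\pi$ and $\iota$ from the vanishing of $\perp$ on $\iota(T)$. Your bookkeeping via the idempotent $e=\iota\circ\pi$, the decomposition $S=\iota(T)\oplus\ker(\pi)$ with $\ker(\pi)$ an ideal, and the multiplicativity of $a\mapsto a^F$ on $\iota(T)$ simply makes explicit the facts hidden behind the paper's ``it can be easily computed''.
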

\begin{proof}
The operation $*$ is clearly commutative. The three maps defined above are additive, hence the distributivity follows. Since $1^T=1=1^F$ and $1^{\perp}=0$, we get that $1$ is a neutral element of $*$ as well. Regarding the associativity of $*$, it can be easily computed that:
\begin{IEEEeqnarray*}{rCl}
(x*y)*z & = & x^Ty^Tz^T+x^Fy^Fz^{\perp}+x^Fy^{\perp}z^F+x^{\perp}y^Fz^F \\
 &+& x^{\perp}y^{\perp}z^F+x^{\perp}y^Fz^{\perp}+x^Fy^{\perp}z^{\perp}+x^{\perp}y^{\perp}z^{\perp}\\
 &= & x*(y*z).
\end{IEEEeqnarray*}
Since the map $\perp$ vanishes on the image of $\iota$, we get $\pi$ and $\iota$ are multiplicative with respect to $*$ as well.
\end{proof}
\begin{remark}\label{transport}
In the set-up above, if we additionally have a bijection:
$$\widetilde{F}:\ker(\pi)\ra \ker(\pi),$$
which is additive, multiplicative, and for $a\in T,b\in \ker(\pi)$ it satisfies:
$$\widetilde{F}(\iota(a)b)=\iota(F(a))\widetilde{F}(b),$$
then the map:
$$x\mapsto x^T+\widetilde{F}\left(x^{\perp}\right)$$
is an isomorphism between the original ring $S$ and the new ring, which was obtained using Lemma \ref{twist}. The isomorphism above commutes with the maps $\iota$ and $\pi$.
\end{remark}
The assumptions from Remark \ref{transport} look unlikely to be satisfied, but they are satisfied in our coordinate $\ka$-algebra scheme situation. If $\ka$ is a field of positive characteristic, then any coordinate $\ka$-algebra scheme $\mathcal{B}$ can be ``twisted'' (Example \ref{firstex}(2) is the motivating case) as in Lemma \ref{twist} with respect to the Frobenius morphism on $\mathbb{S}_{\ka}$, which we denote by $\fr_{\mathbb{S}_{\ka}}$.
\begin{definition}\label{twistdef}
Assume that $\ka$ is a field of positive characteristic, $\mathcal{B}$ is a coordinate $\ka$-algebra scheme, and $n$ is a positive integer. Then,
we denote by $\mathcal{B}^{\fr^n}$ the coordinate $\ka$-algebra scheme obtained from $\mathcal{B}$ using twisting (as in Lemma \ref{twist}) by the morphism $\fr_{\mathbb{S}_{\ka}}^n$.
\end{definition}
\begin{remark}\label{transportfr}
In the case of Definition \ref{twistdef}, we can get an easier proof of Lemma \ref{twist} using Remark \ref{transport}. We assume for simplicity that $\iota(x)=(x,0,\ldots,0)$. If $\ka\subseteq K$ is a field extension such that $K$ is perfect, then we define:
$$\widetilde{F}:=\left(0,\fr_K,\ldots,\fr_K\right):\ker(\pi)(K)\ra \ker(\pi)(K).$$
Since $\fr_K$ is an automorphism of the field $K$, it commutes with all the maps from Lemma \ref{twist}, hence it satisfies the conditions for $\widetilde{F}$ from Remark \ref{transport}.
\end{remark}
\begin{example}\label{trexam}
We describe the twist in two most basic cases.
\begin{enumerate}
\item Suppose that $\mathrm{char}(\ka)=p>0$ and
$$\mathcal{B}=\left(\ka[X]/(X^2)\right)_{\otimes}.$$
It is easy to see that the multiplication in $\mathcal{B}^{\fr^n}$ is given by:
$$\left(x,x'\right)*\left(y,y'\right)=\left(xy,x^{p^n}y'+x'y^{p^n}\right)$$
and we get the coordinate $\ka$-algebra scheme discussed in Example \ref{firstex}(2).

It was noted in the last paragraph of \cite{K4}, that if $n\neq m$, then $\mathcal{B}^{\fr^n}$ and $\mathcal{B}^{\fr^m}$ are not isomorphic as ``ring schemes over $\pi$'' (that is, there is no ring scheme isomorphism commuting with the fixed projection maps as in Definition \ref{bdef}(2)), and it is not difficult to check that they are not isomorphic as ring schemes either.

\item Let us consider the case of
$$\mathcal{B}=\mathbb{S}_{\ka}\times \mathbb{S}_{\ka}=(\ka\times \ka)_{\otimes}$$
and we fix $n>0$. Then, it is easy to check that the morphism:
$$\left(x,x'\right)\mapsto \left(x,x-x^{p^n}+x'\right)$$
is an isomorphism between $\mathcal{B}$ and $\mathcal{B}^{\fr^n}$ in the category of ``ring schemes over $\pi$'' (see the comment in Item $(1)$ above). It is clear that  $\mathcal{B}$ and $\mathcal{B}^{\fr^n}$ are not isomorphic as $\ka$-algebra schemes, since they are not even isomorphic as ``$\ka$-module schemes'' (it is enough to look at the dimensions over a non-perfect field $K$ extending $\ka$).
\end{enumerate}
\end{example}
Using Remark \ref{transportfr}, we can try to generalize the construction from Definition \ref{twistdef} and attempt to transport the multiplicative structure on $\mathcal{B}$ by an additive endomorphism of the form:
$$\Psi=\left(\id,\fr^{n_2},\ldots,\fr^{n_e}\right):\ga^e\ra \ga^e$$
for some $n_2,\ldots,n_e\in \Nn$. If this transported multiplication is given by polynomials (that is: if the inverse of the Frobenius map does not appear), then we get a new coordinate $\ka$-algebra scheme.
The examples below show that this new multiplication need not be given by polynomials. By our classification result (Theorem \ref{classop}), \emph{all} coordinate $\ka$-algebra schemes come from such a transport of a coordinate $\ka$-algebra schemes of the form $B_{\otimes}$ as in Example \ref{firstex}(1).
\begin{example}\label{twistex}
We analyze here two examples of transport.
\begin{enumerate}
\item If $B=\ka[X]/(X^3)$ and $\Psi=(\id,\fr^2,\fr)$, then it is easy to see that the transported multiplication is \emph{not} given by polynomials. More generally, if $\Psi=\left(\id,\fr^{m},\fr^{n}\right)$, then $\Psi$ yields the transported multiplicative structure given by a polynomials if and only if $m\leqslant n$. In this case, we get the following formula for the transported multiplication:
    $$\ \ \ \ \ \ \ \ \ \ \ \ \ \left(x_1,x_2,x_3\right)\ast \left(y_1,y_2,y_3\right)=\left(x_1y_1,x_1^{p^m}y_2+y_1^{p^m}x_2,x_1^{p^n}y_3+\left(x_2y_2\right)^{p^{n-m}}+y_1^{p^n}x_3\right).$$
    For $m=1$ and $n=2$, it seems to be the simplest example of a ``new'' coordinate $\ka$-algebra scheme, which does not come from a Frobenius twist (as in Definition \ref{twistdef}) of a coordinate $\ka$-algebra scheme of the form $B_{\otimes}$.

\item Let $\mathcal{B}$ be a ``difference $\ka$-algebra scheme'', that is $\mathcal{B}\cong \mathbb{S}_{\ka}^{\times e}$, but the $\ka$-algebra scheme structure is given by $\iota(x)=(x,0,\ldots,0)$, so the multiplication formula is as follows:
    $$\left(x_1,x_2,\ldots,x_e\right)\cdot\left(y_1,y_2,\ldots,y_e\right)$$
    $$=\left(x_1y_1,x_1y_2+y_1x_2+x_2y_2,\ldots,y_1x_e+x_1y_e+x_ey_e\right).$$
    For any sequence of non-negative integers $n_2,\ldots,n_e$, the transport by
    $$\Psi:=\left(\id,\fr^{n_2},\ldots,\fr^{n_e}\right)$$
    is given by polynomials:
    $$\left(x_1,x_2,\ldots,x_e\right)\ast\left(y_1,y_2,\ldots,y_e\right)$$
      $$=\left(x_1y_1,y_1^{p^{n_2}}x_2+x_1^{p^{n_2}}y_2+x_2y_2,\ldots,
      y_1^{p^{n_e}}x_e+x_1^{p^{n_e}}y_e+x_ey_e\right).$$
   The transported $\ka$-algebra scheme is isomorphic to the following fiber product of ring schemes over $\ka$:
   $$\left(\mathbb{S}_{\ka}^{\times 2}\right)^{\fr^{n_2}}\times_{\ka} \ldots \times_{\ka} \left(\mathbb{S}_{\ka}^{\times 2}\right)^{\fr^{n_e}}.$$
   By Example \ref{trexam}(2), the above $\ka$-algebra scheme is isomorphic as a ``ring scheme over $\pi$'' to $\mathbb{S}_{\ka}^{\times e}$, which we will use in the proof of Theorem \ref{supmain}.
\end{enumerate}
\end{example}

\subsection{Classification of $\ka$-algebra schemes}
In this subsection, we show that if $\ka$ is a perfect field, then $\ka$-algebra schemes can be described by a finite $\ka$-algebra (as in Example \ref{example1}) and a finite sequence of non-negative integers corresponding to the transport by a sequence of powers of the Frobenius maps as in Example \ref{twistex}.

For a field $\ka$, by $\ka[\fr]$ we mean the ring of ``Frobenius polynomials'' with the usual addition and the multiplication given by the composition of polynomials. More precisely, if $\mathrm{char}(\ka)=0$, then we set $\ka[\fr]$ as $\ka$. If $\mathrm{char}(\ka)=p>0$, then $\ka[\fr]$ consists of polynomials with coefficients in $\ka$, which are of the following form:
$$a_0X+a_1X^p+\ldots +a_nX^{p^n}.$$
It is well-known that $\ka[\fr]$ is isomorphic to the ring of endomorphisms of the additive group scheme $\ga$ over $\ka$ (additive polynomials). If $\ka$ is a positive characteristic field which is not a prime field, then the ring $\ka[\fr]$ is not commutative. This ring coincides with the skew-polynomial ring with respect to the ring endomorphism $\fr_{\ka}:\ka\to \ka$ (the Frobenius map on $\ka$).

We fix a positive integer $e$.
\begin{lemma}\label{diagonal}
We assume that $\ka$ is a perfect field and $\phi \colon \ga^{e} \to \ga^{e}$ is a morphism of group schemes. Then, there are $\alpha, \beta \in \aut(\ga^e)$ such that the composition morphism $\alpha\phi\beta \colon \ga^{e} \to \ga^{e}$ is given by
$$\left( x_1 , \dotsc , x_e \right) \mapsto \left( s_1\left( x_1  \right), \dotsc , s_e\left( x_e  \right) \right)$$
for some monic additive polynomials $s_1, \dotsc , s_e \in \ka[\fr]$.
\end{lemma}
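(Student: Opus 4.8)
The plan is to translate the statement into linear algebra over a skew polynomial ring and then carry out a noncommutative Smith normal form reduction. Recall from the discussion preceding the lemma that $\ka[\fr]$ is the ring of additive polynomials under composition, which is the endomorphism ring $\End(\ga)$ of the additive group scheme over $\ka$. Consequently $\End(\ga^e)\cong M_e(\ka[\fr])$ (a morphism $\ga^e\to\ga^e$ is exactly an $e\times e$ matrix of additive polynomials, with composition given by matrix multiplication), and $\aut(\ga^e)\cong\gl_e(\ka[\fr])$. Under this dictionary $\phi$ corresponds to a matrix $M$, the automorphisms $\alpha,\beta$ to invertible matrices $P,Q$, and the composite $\alpha\phi\beta$ to $PMQ$. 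The goal is therefore to show that $M$ can be brought into diagonal form with monic entries by multiplying on the left and right by elements of $\gl_e(\ka[\fr])$; the resulting diagonal entries $s_1,\dots,s_e$ are then the desired additive polynomials.

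The structural input that makes this work is that, since $\ka$ is perfect, $\ka[\fr]$ is a two-sided Euclidean domain. Indeed, the commutation rule $\fr a=\fr_\ka(a)\fr$ for $a\in\ka$ identifies $\ka[\fr]$ with the twisted polynomial ring $\ka[\fr;\fr_\ka]$, and perfectness makes $\fr_\ka$ an automorphism of $\ka$; a twisted polynomial ring over a field by an automorphism admits both a left and a right division algorithm with respect to the $p$-degree. In particular $\ka[\fr]$ is a noncommutative principal ideal domain, and its units are exactly the nonzero constants $\ka^\times$ (degrees add under multiplication, as $\ka$ is a domain and $\fr_\ka$ is injective). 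This is precisely the setting in which a matrix can be diagonalized by elementary row and column operations.

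I would then run the usual minimal-degree reduction. Among all matrices $PMQ$ with $P,Q$ invertible, choose one possessing a nonzero entry of least $p$-degree and move it to the $(1,1)$ position by permutation matrices. Using the division algorithm (available on both sides because $\fr_\ka$ is invertible), each first-row entry $a_{1j}$ with $j>1$ is reduced modulo $a_{11}$ by a right column operation, and the resulting remainder must vanish by minimality of $\deg a_{11}$; symmetrically one clears the first column by left row operations. Crucially, the row operations used to clear the first column change only entries in columns below row $1$, so they do not repopulate the already-emptied first row, and one arrives at a block form $\mathrm{diag}(s_1,M')$ with $s_1=a_{11}$. Induction on $e$ applied to $M'$ completes the diagonalization. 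The main obstacle here, and the only genuine departure from the commutative Smith normal form, is the bookkeeping forced by noncommutativity: one must keep column operations on the right and row operations on the left, use the correctly-sided division, and verify that clearing the first column does not disturb the first row.

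Finally I would normalize to monic. Each nonzero diagonal entry $s_i$ has a leading coefficient $c_i\in\ka^\times$, and left-multiplying the $i$-th row by the invertible constant $c_i^{-1}$, a diagonal element of $\gl_e(\ka[\fr])$ that can be absorbed into $P$ and hence into $\alpha$, makes $s_i$ monic without affecting the remaining entries. This yields monic $s_1,\dots,s_e$ and finishes the argument. The reduction produces a nonzero $s_i$ for every $i$ exactly when $M$ has full rank over the Ore field of fractions of $\ka[\fr]$, equivalently when $\phi$ is an isogeny; this is the situation relevant to the classification and is what guarantees that all the $s_i$ are genuinely monic.
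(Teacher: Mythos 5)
Your proposal is correct and takes essentially the same route as the paper: both identify $\phi$ with a matrix over the skew polynomial ring $\ka[\fr]$, diagonalize it by row and column operations using left and right division with remainder (perfectness of $\ka$ supplying the left division), and then rescale to make the diagonal entries monic. Your minimal-degree Smith-normal-form bookkeeping is just a cleaner packaging of the paper's iterative degree-reduction, and your closing remark about zero diagonal entries is a fair observation --- the paper's own statement glosses over this case, which its next lemma (where $s_i=0$ is explicitly allowed) then accommodates.
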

\begin{proof}
The morphism $\phi$ corresponds to an $e\times e$ matrix $M$ with coefficients in $\ka[\fr]$.
 There is an algorithm (similar to Gaussian  elimination) putting $M$ into the diagonal form, which yields the desired representation of $\phi$. We present the details for $e=2$, as the general case is analogous.

Let
$$M=\begin{bmatrix}
s_{11} & s_{12} \\
s_{21} & s_{22}
\end{bmatrix}$$
for some $s_{ij}\in \ka[\fr]$. Firstly, we will get rid of the $\left( 2, 1 \right)$-entry.

Assume that the degree of $s_{21}$ is not lower than the degree of $s_{11}$; if this is not the case, we swap the rows of $M$ using multiplication  on the left by the matrix $\begin{bmatrix}
0 & 1 \\
1 & 0
\end{bmatrix}$.  By \cite[Lemma 3.3.2(i)]{sprinlin} (originating from \cite{ore}), there are some $p, r\in \ka[\fr]$ such that $s_{21}=ps_{11}+r$ with $\deg r < \deg s_{21}$. By setting $\alpha = \begin{bmatrix}
1 & 0\\
-p & 1
\end{bmatrix}$,
we get that $\alpha \phi = \begin{bmatrix}
s_{11} & s_{12} \\
r & -ps_{12}+s_{22}
\end{bmatrix}$. We reduced the degree of the $\left( 2, 1 \right)$-entry and by iterating this process we annihilate this entry.

By \cite[Lemma 3.3.2(ii)]{sprinlin}, the left division with remainder is possible in $\ka[\fr]$ (perfectness of $\ka$ matters here) and we get rid of the $\left( 1, 2 \right)$-entry  in a similar way.

Therefore, we can assume that $\phi$ corresponds to a diagonal matrix. By composing with a morphism of the form
$$\left(x_1,\ldots,x_e\right)\mapsto \left(x_1/a_1,\ldots,x_e/a_e\right),$$
we obtain that the additive polynomials from the diagonal of $M$ are monic.
\end{proof}

\begin{lemma}\label{diagonaltofr}
Let
$$\phi=(s_1,\ldots,s_e) \colon \mathcal{A} \ra \mathcal{B}$$
be a morphism of ring schemes over a field $\ka$ such that the additive group schemes of $\mathcal{A}$ and $\mathcal{B}$ are isomorphic to $\ga^{e}$, and $s_1,\ldots,s_e$ are as in Lemma \ref{diagonal}. Then, for each $i$ we have that $s_i$ is a power of the Frobenius morphism or $s_i=0$.
\end{lemma}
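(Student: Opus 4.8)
The plan is to reduce the statement to a system of one- and two-variable polynomial identities and then run an elementary characteristic-$p$ computation on the coefficients of the $s_i$. First I would record the shape of the two multiplications. Since $(\mathcal{A},+)\cong\ga^e\cong(\mathcal{B},+)$ and the multiplication of any ring scheme is bi-additive (it distributes over addition in each argument), each coordinate of a product is an $\Ff_p$-bilinear combination of Frobenius monomials; writing $x=(x_1,\dots,x_e)$, $y=(y_1,\dots,y_e)$, one has $(x\cdot_{\mathcal{A}}y)_k=\sum_{i,j}m^{\mathcal{A}}_{k,ij}(x_i,y_j)$ with each $m^{\mathcal{A}}_{k,ij}(\xi,\eta)=\sum_{a,b}c_{ab}\,\xi^{p^a}\eta^{p^b}$, and similarly for $\mathcal{B}$. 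Because $\phi=(s_1,\dots,s_e)$ is diagonal and each $s_k\in\ka[\fr]$ is additive, the condition that $\phi$ be a morphism of ring schemes unwinds, block by block, into the identities
$$s_k\bigl(m^{\mathcal{A}}_{k,ij}(\xi,\eta)\bigr)=m^{\mathcal{B}}_{k,ij}\bigl(s_i(\xi),s_j(\eta)\bigr)\qquad(1\le i,j,k\le e),$$
together with unit preservation $\phi(1_{\mathcal{A}})=1_{\mathcal{B}}$.

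The heart of the argument is the diagonal block $i=j=k$, which involves $s_k$ alone, namely $s_k(m^{\mathcal{A}}_{k,kk}(\xi,\eta))=m^{\mathcal{B}}_{k,kk}(s_k(\xi),s_k(\eta))$. The model case to keep in mind is that of the honest product $\xi\eta$, where the identity becomes $s_k(\xi\eta)=s_k(\xi)s_k(\eta)$. Writing $s_k=\sum_c\sigma_c X^{p^c}$ and using $(\xi\eta)^{p^c}=\xi^{p^c}\eta^{p^c}$ in characteristic $p$, the left-hand side produces only the ``diagonal'' monomials $\sigma_c\,\xi^{p^c}\eta^{p^c}$, whereas the right-hand side produces $\sigma_a\sigma_b\,\xi^{p^a}\eta^{p^b}$ for all $a,b$. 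Comparing coefficients forces $\sigma_a\sigma_b=0$ for $a\ne b$ and $\sigma_c^2=\sigma_c$, so each $\sigma_c$ is an idempotent of the field $\ka$, hence $0$ or $1$, and at most one of them is nonzero. The monicity supplied by Lemma \ref{diagonal} then pins $s_k$ to a single $X^{p^n}$, the only alternative being $s_k=0$. My plan for general $e$ is to reproduce this dichotomy coordinatewise.

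The step I expect to be the main obstacle is isolating, for each fixed $k$, a genuine ``product'' identity of the above shape for $s_k$ alone, i.e.\ locating a block in which the relevant bi-additive form is nondegenerate and behaves, after a Frobenius twist and a scalar, like $\xi\eta$. This is exactly where the multiplicative unit enters: the relation $1_{\mathcal{A}}\cdot_{\mathcal{A}}x=x$ forces, for every $k$, the operator identity $\sum_i m^{\mathcal{A}}_{k,ik}\bigl((1_{\mathcal{A}})_i,\xi\bigr)=\xi$ in $\ka[\fr]$, which prevents all the blocks feeding coordinate $k$ from being degenerate and lets one extract a relation in which $s_k$ plays the role of a ring endomorphism of $\mathbb{S}_{\ka}$. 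Once such a relation is in hand, the characteristic-$p$ idempotent computation above applies verbatim: the off-diagonal Frobenius monomials must vanish, the surviving coefficients are idempotents of $\ka$, and monicity leaves either a single power of Frobenius or the zero polynomial.

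For the bookkeeping I would organise everything through the regular representation $w\mapsto R^{\mathcal{A}}_w\in M_e(\ka[\fr])$ given by right multiplication, under which multiplicativity of $\phi$ becomes the single matrix identity $\mathrm{diag}(s_1,\dots,s_e)\,R^{\mathcal{A}}_w=R^{\mathcal{B}}_{\phi(w)}\,\mathrm{diag}(s_1,\dots,s_e)$, valid for $w$ ranging over the generic point. Reading off the $(k,i)$-entry recovers the blockwise identities above, and tracking the leading and lowest Frobenius-degrees of these entries — the $\ord$ and $\deg$ invariants of $\ka[\fr]$ used already in Lemma \ref{diagonal} — is the cleanest way to control the intermediate coefficients of each $s_k$ and to confirm that no terms survive strictly between its lowest and top degrees, so that $s_k$ is forced to be a monomial $X^{p^{n_k}}$ or $0$.
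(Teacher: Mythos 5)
Your reduction to blockwise identities is fine: since distinct blocks involve distinct pairs of variables, multiplicativity of $\phi$ does split into $s_k\bigl(m^{\mathcal{A}}_{k,ij}(\xi,\eta)\bigr)=m^{\mathcal{B}}_{k,ij}\bigl(s_i(\xi),s_j(\eta)\bigr)$, and your idempotent computation is correct in the model case $m(\xi,\eta)=\xi\eta$. The gap is precisely the step you flag as the main obstacle: producing, for each $k$, a product-like identity involving $s_k$ alone. First, the diagonal block can be identically zero --- for the ring scheme of Example \ref{firstex}(2) one has $(x\cdot y)_2=x_1^py_2+y_1^px_2$, so $m_{2,22}=0$ --- hence ``reproducing the dichotomy coordinatewise'' has nothing to bite on; the nonzero blocks couple $s_k$ to the other $s_i$'s, and you give no chaining or induction scheme that handles this coupling for two arbitrary ring schemes $\mathcal{A},\mathcal{B}$ whose multiplication tables are unknown. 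Second, the specific mechanism you propose --- feeding the unit relation into the blockwise identities --- provably yields nothing by itself: writing $u=1_{\mathcal{A}}$ and $v=\phi(u)=1_{\mathcal{B}}$, the unit relations read $\sum_i m^{\mathcal{A}}_{k,ik}(u_i,\eta)=\eta$ and $\sum_i m^{\mathcal{B}}_{k,ik}(v_i,\zeta)=\zeta$; applying the additive map $s_k$ to the first and using the blockwise identities specialized at $\xi=u_i$ gives
$$s_k(\eta)=\sum_i s_k\bigl(m^{\mathcal{A}}_{k,ik}(u_i,\eta)\bigr)=\sum_i m^{\mathcal{B}}_{k,ik}\bigl(v_i,s_k(\eta)\bigr)=s_k(\eta),$$
a tautology. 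So the unit does not force any block to be nondegenerate ``like $\xi\eta$'', and no amount of $\deg$/$\ord$ bookkeeping in $\ka[\fr]$ recovers the missing information.

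The missing input is geometric, and it is exactly what the paper's proof uses: connectedness. The paper passes to $\ka^{\alg}$ and invokes \cite[Proposition 2.1]{algring}, which says that $\ker(\phi)$, being an ideal subscheme of the connected ring scheme $\mathcal{A}$, is connected (intuitively: for $x\in\ker(\phi)$, the image of the connected scheme $\mathcal{A}$ under $a\mapsto a\cdot x$ is connected and joins $0=0\cdot x$ to $x=1\cdot x$). Since $\ker(\phi)=\ker(s_1)\times\cdots\times\ker(s_e)$, and a monic additive polynomial that is not a monomial has, over $\ka^{\alg}$, a finite kernel with more than one point, any such $s_i$ would make $\ker(\phi)$ disconnected --- a contradiction. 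Note that this argument also uses the unit ($1\cdot x=x$), but only in combination with connectedness of $\mathcal{A}$, an ingredient entirely absent from your coefficient-level plan; without a substitute for it, your proposal does not close.
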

\begin{proof}
For any (additive) morphism $s:\ga\to \ga$ of additive group schemes over $\ka$, $s$ is a power of the Frobenius morphism or $s=0$ if and only if the same holds for the induced (base-change) morphism of additive groups schemes over the algebraic closure $\ka^{\alg}$:
$$s_{\ka^{\alg}}:\mathbb{G}_{\mathrm{a},\ka^{\alg}}\ra \mathbb{G}_{\mathrm{a},\ka^{\alg}}.$$
Therefore, we can assume that the field $\ka$ is algebraically closed.
By \cite[Proposition 2.1]{algring}, the ideal $\ker(\phi)$ is a connected algebraic variety. If there is $i$ such that $s_i$ is neither a power of Frobenius nor $s_i=0$, then $s_i$ is not a monomial and $\ker(s_i)$ is a finite non-trivial group. Since we have:
$$\ker(\phi)=\ker(s_1)\times \ldots \times \ker(s_e),$$
we get that $\ker(\phi)$ is not connected, which is a contradiction.
\end{proof}
\begin{theorem}\label{dime}
Assume that the field $\ka$ is perfect and $\mathcal{B}$ is a $\ka$-algebra scheme such that $(\mathcal{B},+)\cong \ga^e$. Then, we have:
$$\dim_{\ka}(\mathcal{B}(\ka))=e.$$
\end{theorem}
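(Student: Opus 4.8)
The plan is to analyze the $\ka$-vector space structure that Remark \ref{ralg}(1) puts on $A:=\mathcal{B}(\ka)$ and to show that over a perfect field the Frobenius-twisting, which is what spoils the dimension count over imperfect fields, is invisible to the $\ka$-dimension. First I would fix the isomorphism $(\mathcal{B},+)\cong\ga^e$, so that $(A,+)=(\ka^e,+)$, and record the scalar multiplication $m_c\colon A\to A$, $m_c(v)=\iota_{\ka}(c)\cdot v$. As $c$ varies these assemble into a morphism $\mathbb{S}_{\ka}\times\ga^e\to\ga^e$ which is additive in each variable (distributivity of the ring $\mathcal{B}$ and additivity of $\iota$). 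Hence each $m_c$ is an endomorphism of the group scheme $\ga^e$, i.e. a matrix over $\ka[\fr]$, and $c\mapsto m_c$ is a ring homomorphism $\ka\to\mathrm{Mat}_e(\ka[\fr])$ with $m_1=\id$ and $m_{cc'}=m_c m_{c'}$. In other words, the only datum that matters is the $\ka$-module scheme structure carried by $\ga^e$.

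Next I would diagonalize this module structure. Using the Gaussian-elimination / Smith-normal-form procedure over the skew PID $\ka[\fr]$ from Lemma \ref{diagonal}, together with the fact from Lemma \ref{diagonaltofr} that diagonal entries of a ring-scheme morphism are powers of Frobenius, I would produce $\alpha\in\aut(\ga^e)$ after which the action becomes diagonal,
$$c\cdot(v_1,\dotsc,v_e)=\bigl(c^{p^{n_1}}v_1,\dotsc,c^{p^{n_e}}v_e\bigr)$$
for non-negative integers $n_1,\dotsc,n_e$; since $m_1=\id$ is invertible, no zero lines occur. Equivalently, $A$ splits as a direct sum of Frobenius-twisted lines $\ka_{(n_i)}$, where $\ka_{(n)}$ denotes $(\ka,+)$ equipped with $c\cdot v=c^{p^n}v$.

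Finally I would compute the dimension summand by summand. For each $n$ one has $\sspan_{\ka}\{1\}=\{c^{p^n}:c\in\ka\}=\ka^{p^n}$, and perfectness of $\ka$ gives $\ka^{p^n}=\ka$, so $\{1\}$ spans and $\dim_{\ka}\ka_{(n)}=1$. Summing over the $e$ lines yields $\dim_{\ka}A=e$. (In characteristic $0$ we have $\ka[\fr]=\ka$, all $n_i=0$, and this is just the statement that the $e$-dimensional space $\ka^e$ with a $\ka$-linear module structure has dimension $e$, so the same argument covers that case with no twisting.)

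The main obstacle is the diagonalization step. Lemmas \ref{diagonal} and \ref{diagonaltofr} diagonalize a \emph{single} morphism $\phi\colon\ga^e\to\ga^e$ by two-sided coordinate changes $\alpha\phi\beta$, whereas here I must bring the whole commuting family $\{m_c\}_{c\in\ka}$ to diagonal form \emph{simultaneously}, by a single change of variables $v\mapsto\alpha(v)$ that is compatible with $m_{cc'}=m_c m_{c'}$ and $m_1=\id$. Reconciling the two-sided normal form of Lemma \ref{diagonal} with this one-sided rigidity — equivalently, proving that over a perfect field the $\ka$-module scheme $\ga^e$ decomposes into twisted lines — is where the real work sits, and perfectness of $\ka$ (bijectivity of $\fr_{\ka}$ and the availability of left division in $\ka[\fr]$, as used in the proof of Lemma \ref{diagonaltofr}) enters precisely at this point and again in the final count.
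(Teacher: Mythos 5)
Your setup is correct: the scalar multiplications $m_c$ are indeed endomorphisms of the group scheme $\ga^e$, hence matrices over $\ka[\fr]$ depending polynomially on $c$, and $c\mapsto m_c$ is a ring homomorphism; moreover, granted your decomposition into Frobenius-twisted lines, the final dimension count over a perfect field is right. But the decomposition itself --- conjugating the \emph{whole commuting family} $\{m_c\}_{c\in\ka}$ by a single $\alpha\in\aut(\ga^e)$ into the diagonal form $\left(c^{p^{n_1}},\dotsc,c^{p^{n_e}}\right)$ --- is exactly what you never prove, and you admit as much ("this is where the real work sits"). The two lemmas you invoke cannot supply it. Lemma \ref{diagonal} produces a \emph{two-sided} normal form $\alpha\phi\beta$ for a \emph{single} morphism; the matrices $\alpha,\beta$ depend on $\phi$ and $\beta\neq\alpha^{-1}$, so this says nothing about simultaneous conjugation of a family. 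Lemma \ref{diagonaltofr} applies only to morphisms of \emph{ring} schemes --- its proof uses connectedness of $\ker(\phi)$ as an ideal, via \cite[Proposition 2.1]{algring} --- whereas the maps $m_c$ are merely additive group scheme endomorphisms and do not preserve the ring multiplication, so it cannot be applied to them. The heart of your argument is therefore an unproven structure theorem for $\ka$-module schemes.

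There is also an architectural problem: within this paper, that structure theorem is essentially \emph{downstream} of the statement you are proving. The decomposition of $\mathcal{B}$ into twisted lines is the content of Theorem \ref{classop} (compare the isomorphism $\mathcal{B}(K)\cong K\times\left(\fr^{n_2}_K\right)_*(K)\times\dotsb$ in the proof of Proposition \ref{eqcon}), and Theorem \ref{classop} is proved \emph{from} Theorem \ref{dime}: one needs $\dim_{\ka}\mathcal{B}(\ka)=e<\infty$ just to form the comparison map $\Theta$ of Notation \ref{nottheta}. So you would have to prove your module-scheme decomposition from scratch, which is at least as hard as the theorem itself. The paper's own proof avoids all structure theory and is far more elementary: if $\dim_{\ka}\mathcal{B}(\ka)\leqslant e-1$, a spanning tuple $\bar{b}$ yields a group scheme morphism $s_{\bar{b}}:\ga^{e-1}\to\mathcal{B}$ that is surjective on $\ka$-points, hence (for infinite $\ka$) a dominant morphism $\Aa^{e-1}_{\ka}\to\Aa^{e}_{\ka}$, which is absurd; if $\dim_{\ka}\mathcal{B}(\ka)\geqslant e+1$, an independent tuple yields an injective map $\ka^{e+1}\to\ka^{e}$ presented by an $e\times(e+1)$ matrix over $\ka[\fr]$, and Gaussian elimination over $\ka[\fr]$ (this is where perfectness, i.e.\ left division, enters) produces a zero column while preserving injectivity --- a contradiction. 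You should either switch to such a counting argument or actually prove the simultaneous decomposition; as it stands, your proposal has a genuine gap.
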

\begin{proof}\renewcommand{\qedsymbol}{}
For any $b\in \mathcal{B}(\ka)$ considered as a scheme morphism $\spec(\ka)\to \mathcal{B}$, we denote by $s_b$ the following composition morphism of group schemes ($\mathcal{B}$ is considered below with the additive group scheme structure):
\begin{equation*}
\xymatrix{\ga \ar[r]^-{\cong} &  \ga\times_{\spec(\ka)}\spec(\ka) \ar[r]^-{\iota\times b} & \mathcal{B}\times_{\spec(\ka)}\mathcal{B} \ar[r]^-{m}  & \mathcal{B},}
\end{equation*}
where $m$ is the ring scheme multiplication in $\mathcal{B}$. For any finite tuple $\bar{b}=(b_1,\ldots,b_n)$ of elements of $\mathcal{B}(\ka)$, we denote by $s_{\bar{b}}$ the following morphism of group schemes:
\begin{equation*}
\xymatrix{\ga^n \ar[rr]^-{s_{b_1}\times \ldots \times s_{b_n}} & & \mathcal{B}^{\times n} \ar[rr]^-{+}  & & \mathcal{B},}
\end{equation*}
where $+$ is the ring scheme addition in $\mathcal{B}$.
\end{proof}
\noindent
{\bf Claim 1}
$$\dim_{\ka}(\mathcal{B}(\ka))\geqslant e.$$
\begin{proof}[Proof of Claim 1]
Suppose not. Then, there is $\bar{b}\in \mathcal{B}(\ka)^{\times (e-1)}$ such that for the following group scheme morphism (defined above):
$$s_{\bar{b}}:\ga^{e-1}\ra \mathcal{B},$$
the group homomorphisms on $\ka$-rational points:
$$\left(s_{\bar{b}}\right)_{\ka}:\ga^{e-1}(\ka)\ra \mathcal{B}(\ka)$$
is onto. Since $(\mathcal{B},+)=\ga^{e}$ , it is impossible (it is clear for a finite $\ka$; if $\ka$ is infinite, then we get a dominant morphism $\Aa^{e-1}_{\ka}\to \Aa^{e}_{\ka}$ giving a contradiction).
\end{proof}
\noindent
{\bf Claim 2}
$$\dim_{\ka}(\mathcal{B}(\ka))\leqslant e.$$
\begin{proof}[Proof of Claim 2]
Suppose not. Then, there is a tuple $\bar{b}\in \mathcal{B}(\ka)^{\times(e+1)}$ such that the homomorphism
on $\ka$-rational points:
$$s:=\left(s_{\bar{b}}\right)_{\ka}:\ga^{e+1}(\ka)\ra \mathcal{B}(\ka)$$
is one-to-one. As in the proof of Lemma \ref{diagonal}, $s$ is given by an $e\times (e+1)$ matrix $M$ with coefficients in $\ka[\fr]$ and we can apply the
``Gaussian elimination'' over $\ka[\fr]$ to $M$. Since the matrix $M$ has less rows than columns,
we can transform $M$ into a matrix $M'$ having at least one zero column. Since the map $s$ corresponding to $M$ is one-to-one, the map corresponding to $M'$ is one-to-one as well, a contradiction.
\end{proof}
\begin{remark}
The perfectness assumption in Theorem \ref{dime} was used only for the inequality in Claim 2 above, however, this assumption can not be dropped. For example, if $\ka$ is a non-perfect field of characteristic $p>0$, then for $\lambda \in \ka\setminus \ka^p$ and the group scheme morphism:
$$\Psi:\ga^2\ra \ga,\ \ \ \ \ \ \ \ \ \  \Psi(x,y)=x^p+\lambda y^p,$$
we get that the map $\Psi_{\ka}:\ga^2(\ka)\to \ga(\ka)$ is one-to-one.
\end{remark}
We introduce below a particular morphism of $\ka$-algebra schemes, which will play the role of $\phi$ from Lemma \ref{diagonaltofr}.
\begin{notation}\label{nottheta}
We assume that $\mathcal{B}$ is a $\ka$-algebra scheme such that $(\mathcal{B},+)=\ga^e$ and $\ka$ is a perfect field. By Theorem \ref{dime}, we have $\dim_{\ka}\mathcal{B}(\ka)=e$, so $\mathcal{B}(\ka)_\otimes$ is a $\ka$-algebra scheme by Example \ref{example1}. Then, there is a natural transformation:
$$\Theta:\mathcal{B}(\ka)_{\otimes}\ra \mathcal{B},\ \ \ \ \ \ \ \ \ \ \ \ \ \Theta_R:\mathcal{B}(\ka)\otimes_{\ka}R\to \mathcal{B}(R);$$
where for a $\ka$-algebra $R$, the map $\Theta_R$ is given by the universal property of the tensor product and the $\ka$-algebra homomorphisms $\mathcal{B}\left( \ka \right) \to \mathcal{B}\left( R \right),R \to\mathcal{B}\left( R \right)$.
\end{notation}
We are ready now to show our main classification result.
\begin{theorem}\label{classop}
Assume that $\ka$ is perfect and $\mathcal{B}$ is a $\ka$-algebra scheme such that $(\mathcal{B},+)=\ga^e$. Then, up to an isomorphism, for some $n_1,\ldots,n_e\in \Nn$ we have:
$$\Theta = \left(\fr^{n_1},\fr^{n_2},\dotsc ,\fr^{n_e}\right),$$
where $\Theta$ is the $\ka$-algebra scheme morphism introduced in Notation \ref{nottheta}.

Moreover, if $\mathcal{B}$ is a coordinate $\ka$-algebra scheme, then $n_1=0$.
\end{theorem}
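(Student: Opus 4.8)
The plan is to regard $\Theta$ as a morphism of ring schemes whose underlying morphism of additive group schemes is a map $\ga^e\to\ga^e$, to diagonalize that underlying map by Lemma~\ref{diagonal}, and then to invoke Lemma~\ref{diagonaltofr} to recognize the diagonal entries as powers of Frobenius (or zero). Writing $B:=\mathcal{B}(\ka)$, I first note that both ends of $\Theta$ have additive group $\ga^e$: the target does by hypothesis, and the source $B_\otimes$ does because $\dim_\ka B=e$ by Theorem~\ref{dime}, so $B_\otimes(R)=B\otimes_\ka R\cong R^e$ as $R$-modules. Lemma~\ref{diagonal} then produces $\alpha,\beta\in\aut(\ga^e)$ with $\alpha\,\Theta\,\beta=(s_1,\dotsc,s_e)$ for monic $s_i\in\ka[\fr]$.

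The point needing care is that $\alpha,\beta$ are a priori only automorphisms of the additive group, whereas Lemma~\ref{diagonaltofr} demands a morphism of \emph{ring} schemes. I would remedy this by transporting the multiplications: let $\mathcal{A}$ be $\ga^e$ carrying the unique ring-scheme structure for which $\beta\colon\mathcal{A}\to B_\otimes$ is an isomorphism of ring schemes, and let $\mathcal{B}'$ be $\ga^e$ carrying the structure for which $\alpha\colon\mathcal{B}\to\mathcal{B}'$ is one (since $\alpha,\beta$ are additive, the transported additions remain the standard one, so both additive groups are still $\ga^e$). Then $\alpha\,\Theta\,\beta\colon\mathcal{A}\to\mathcal{B}'$ is a composite of ring-scheme morphisms, hence itself a ring-scheme morphism in diagonal form, so Lemma~\ref{diagonaltofr} applies and each $s_i$ is either $0$ or a power of Frobenius. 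Being monic, each $s_i$ is nonzero, so $s_i=\fr^{n_i}$ for some $n_i\in\Nn$; since $\mathcal{A}\cong B_\otimes$ and $\mathcal{B}'\cong\mathcal{B}$, this says precisely that $\Theta=(\fr^{n_1},\dotsc,\fr^{n_e})$ up to isomorphism.

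For the moreover part I would use that, when $\mathcal{B}$ is a coordinate $\ka$-algebra scheme, $\Theta$ is compatible with both structure maps. As $\Theta$ is a morphism of $\ka$-algebra schemes it satisfies $\Theta\circ\iota^{B_\otimes}=\iota^{\mathcal{B}}$, and a direct check on elementary tensors shows $\pi^{\mathcal{B}}\circ\Theta=\pi^{B_\otimes}$, where $\pi^{B_\otimes}=\pi_B\otimes\id$ for $\pi_B:=(\pi^{\mathcal{B}})_\ka\colon B\to\ka$. Pick a $\ka$-basis $1=e_1,e_2,\dotsc,e_e$ of $B$ with $e_2,\dotsc,e_e\in\ker\pi_B$, and normalize the coordinates on $\mathcal{B}$ (an additive change fixing the first coordinate, hence preserving the coordinate structure) so that $\iota^{\mathcal{B}}(x)=(x,0,\dotsc,0)$. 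With respect to these coordinates both compatibilities translate into the statement that the matrix of $\Theta$ over $\ka[\fr]$ has first row and first column equal to $(1,0,\dotsc,0)$; that is, $\Theta$ is block diagonal, $\Theta=\id\oplus\Theta'$, with $\Theta'$ the induced map on the augmentation kernels $\ker\pi\cong\ga^{e-1}$.

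It remains to diagonalize only the block $\Theta'$ via Lemma~\ref{diagonal}, giving $\alpha',\beta'\in\aut(\ga^{e-1})$, and to set $\alpha=\id\oplus\alpha'$, $\beta=\id\oplus\beta'$, which fix the first coordinate. Transporting structure exactly as above — now $\alpha,\beta$ fix the first coordinate, so they preserve $\iota$ and $\pi$ and keep the transported objects coordinate $\ka$-algebra schemes — we get $\alpha\,\Theta\,\beta=(\id,s_2,\dotsc,s_e)$, so Lemma~\ref{diagonaltofr} forces $s_2,\dotsc,s_e$ to be powers of Frobenius while the first entry is literally $\id=\fr^0$; hence $n_1=0$. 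The main obstacle throughout is precisely this bookkeeping that lets a purely additive diagonalization be read as a statement about ring-scheme morphisms: once the multiplications are transported along $\alpha,\beta$, Lemma~\ref{diagonaltofr} does the real work, and the coordinate refinement only requires arranging $\alpha,\beta$ to fix the first coordinate.
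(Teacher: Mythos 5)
Your proposal follows the same route as the paper's proof: diagonalize $\Theta$ via Lemma \ref{diagonal}, recognize the diagonal entries via Lemma \ref{diagonaltofr}, and use compatibility of $\Theta$ with $\pi$ (which holds by naturality of $\pi$, as you verify on elementary tensors) for the moreover part. Your explicit transport of the multiplicative structures along $\alpha,\beta$ — so that the diagonalized map is genuinely a morphism of \emph{ring} schemes, as Lemma \ref{diagonaltofr} requires — and your block-diagonal refinement with $\alpha=\id\oplus\alpha'$, $\beta=\id\oplus\beta'$ in the moreover part are exactly the bookkeeping that the paper compresses into the phrase ``up to an isomorphism''; both are correct and are welcome elaborations of a very terse argument.

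The one step I would not accept as written is your exclusion of the zero entries: you argue ``being monic, each $s_i$ is nonzero.'' The monicity claim of Lemma \ref{diagonal} cannot be taken at face value: for the zero morphism $\phi=0$ every composition $\alpha\phi\beta$ is zero, so no choice of $\alpha,\beta$ yields monic diagonal entries, and the correct reading of that lemma is that each diagonal entry is monic \emph{or zero}. This is also how the paper uses it — the conclusion of Lemma \ref{diagonaltofr} explicitly allows $s_i=0$, and the paper's proof of Theorem \ref{classop} states that ``it is still possible to have the zero morphism on some coordinate.'' So leaning on monicity here inherits a flaw of the lemma's statement rather than closing the case. The robust argument, which is the one the paper gives, is that $\Theta_{\ka}$ is the identity map on $\mathcal{B}(\ka)$: since $\alpha_{\ka}$ and $\beta_{\ka}$ are bijections, $(\alpha\Theta\beta)_{\ka}$ is a bijection of $\ka^{e}$, whereas a zero diagonal entry $s_i=0$ would put the whole $i$-th coordinate axis in the kernel, contradicting injectivity. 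With this substitution — both in the main part and again in your moreover part, where the same monicity appeal recurs for $s_2,\ldots,s_e$ — your proof is complete and coincides with the paper's.
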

\begin{proof}
By applying Lemma \ref{diagonaltofr} to the morphism $\Theta$, we almost get the desired form of $\Theta$ except that it is still possible to have the zero morphism on some coordinate. However, $\Theta_{\ka}$ is the identity map on $\mathcal{B}(\ka)$, hence no zero morphism can appear.

For the moreover part, we notice that if $\mathcal{B}$ is a coordinate $\ka$-algebra scheme, then $\Theta$ commutes with the projection on the first coordinate (since $\pi$ is a natural map between $\mathcal{B}$ and the identity functor) implying $n_1=0$.
\end{proof}
We immediately get the following consequence of Theorem \ref{classop} saying that in the case of characteristic $0$, there are no ``new'' $\ka$-algebra schemes.
\begin{cor}\label{charac0}
Suppose that $\mathrm{char}(\ka)=0$ and $\mathcal{B}$ is as in Theorem \ref{classop}. Then, we have the following isomorphism of $\ka$-algebra schemes:
$$\mathcal{B}\cong \mathcal{B}(\ka)_{\otimes}.$$
\end{cor}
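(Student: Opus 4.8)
The plan is to deduce Corollary \ref{charac0} directly from Theorem \ref{classop} by observing that in characteristic $0$ the only Frobenius powers available are trivial. Recall that the morphism $\Theta \colon \mathcal{B}(\ka)_{\otimes} \to \mathcal{B}$ from Notation \ref{nottheta} is always a morphism of $\ka$-algebra schemes; to prove the corollary it suffices to show that $\Theta$ is an isomorphism. By Theorem \ref{classop}, up to isomorphism we have $\Theta = (\fr^{n_1}, \dotsc, \fr^{n_e})$ for some $n_1, \dotsc, n_e \in \Nn$.

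First I would invoke the convention fixed just before Lemma \ref{diagonal}: when $\mathrm{char}(\ka) = 0$, the ring $\ka[\fr]$ is by definition equal to $\ka$, so the only ``Frobenius polynomials'' are the scalars, and in particular the symbol $\fr$ denotes the identity morphism $\id \colon \ga \to \ga$ (there is no nontrivial Frobenius endomorphism of $\ga$ in characteristic $0$). Consequently each coordinate $\fr^{n_i}$ of $\Theta$ is simply the identity morphism on $\ga$, regardless of the value of $n_i$. Hence, up to the isomorphism furnished by Theorem \ref{classop}, $\Theta$ is the identity morphism $(\id, \dotsc, \id) \colon \ga^e \to \ga^e$ on the underlying additive group schemes.

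Next I would upgrade this from an isomorphism of additive group schemes to an isomorphism of $\ka$-algebra schemes. Since $\Theta$ is by construction a morphism of $\ka$-algebra schemes (it is the natural transformation from Notation \ref{nottheta}), and we have just seen that its underlying morphism of additive group schemes $\ga^e \to \ga^e$ is an isomorphism, it follows that $\Theta$ is itself an isomorphism of $\ka$-algebra schemes: a morphism of affine ring schemes whose underlying morphism of additive group schemes is an isomorphism is an isomorphism, because bijectivity can be checked on the level of the representing Hopf-algebra-type structure, and the ring scheme structure maps are transported along the inverse. Therefore $\mathcal{B} \cong \mathcal{B}(\ka)_{\otimes}$ as $\ka$-algebra schemes, which is exactly the assertion.

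I do not anticipate a serious obstacle here, since the corollary is essentially a one-line specialization of Theorem \ref{classop}. The only point requiring mild care is the very last inference: one must make sure that an isomorphism on the additive group schemes which is simultaneously a morphism of ring schemes is automatically an isomorphism of ring schemes. This is routine, but it is the step where the $\ka$-algebra scheme structure (as opposed to a mere additive structure) is genuinely used, so I would state it explicitly rather than leaving it to the reader.
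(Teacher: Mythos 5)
Your proposal is correct and is exactly the deduction the paper intends: the paper states Corollary \ref{charac0} as an immediate consequence of Theorem \ref{classop}, the point being that in characteristic $0$ the convention $\ka[\fr]=\ka$ forces every coordinate $\fr^{n_i}$ of $\Theta$ to be the identity, so $\Theta$ is an isomorphism. Your extra care at the end (a morphism of $\ka$-algebra schemes whose underlying scheme morphism is an isomorphism is an isomorphism of $\ka$-algebra schemes, since the pointwise inverses of bijective ring homomorphisms assemble into an inverse natural transformation compatible with $\iota$) is a valid and harmless elaboration of what the paper leaves implicit.
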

We also obtain the following.
\begin{cor}\label{perext}
Let $\ka,\mathcal{B}$ be as in Theorem \ref{classop} and suppose that $\ka\subseteq K$ is an extension of perfect fields. Then, we have the following isomorphism of $K$-algebras:
$$\mathcal{B}(K)\cong \mathcal{B}(\ka)\otimes_{\ka}K.$$
\end{cor}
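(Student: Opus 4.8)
The plan is to read off the conclusion from the comparison map $\Theta$ of Theorem \ref{classop}, using that perfectness of $K$ makes the relevant powers of Frobenius invertible on $K$-points. First I would recall from Notation \ref{nottheta} the natural transformation $\Theta\colon \mathcal{B}(\ka)_{\otimes}\to \mathcal{B}$ and evaluate it at the $\ka$-algebra $K$, obtaining
$$\Theta_K\colon \mathcal{B}(\ka)\otimes_{\ka}K\to \mathcal{B}(K).$$
Since $\Theta_K$ is assembled via the universal property of the tensor product from the structural $\ka$-algebra homomorphisms $\mathcal{B}(\ka)\to \mathcal{B}(K)$ (functoriality applied to $\ka\hookrightarrow K$) and $\iota_K\colon K\to \mathcal{B}(K)$, it is a homomorphism of $K$-algebras, where $K$ acts on the source through the second tensor factor. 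As $\mathcal{B}(\ka)_{\otimes}(K)=\mathcal{B}(\ka)\otimes_{\ka}K$, it therefore suffices to prove that $\Theta_K$ is bijective: then it is automatically a $K$-algebra isomorphism, which is exactly the claim.

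To check bijectivity I would invoke Theorem \ref{classop}: up to isomorphism of the additive group schemes we have $\Theta=(\fr^{n_1},\dotsc,\fr^{n_e})$ for some $n_1,\dotsc,n_e\in\Nn$. Concretely, by Lemma \ref{diagonal} there are $\alpha,\beta\in\aut(\ga^e)$ over $\ka$ with $\alpha\circ\Theta\circ\beta=(\fr^{n_1},\dotsc,\fr^{n_e})$ as morphisms $\ga^e\to\ga^e$. Evaluating at $K$, the maps $\alpha_K$ and $\beta_K$ are bijections of $\ga^e(K)=K^e$, because $\alpha,\beta$ correspond to invertible matrices over $\ka[\fr]$ and, evaluation being functorial, the inverse matrices induce the inverse maps on $K$-points. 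On the other hand, the diagonal morphism acts on $K^e$ coordinatewise, the $i$-th coordinate being a power of the Frobenius morphism $\fr_K$ (in characteristic $0$ it is simply the identity), and $\fr_K$ is an automorphism of $K$ precisely because $K$ is perfect. Hence the diagonal map is bijective on $K$-points, and therefore so is $\Theta_K=\alpha_K^{-1}\circ(\fr^{n_1},\dotsc,\fr^{n_e})_K\circ\beta_K^{-1}$.

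The only point requiring care is the interplay between the two kinds of structure involved. The ``up to isomorphism'' in Theorem \ref{classop} refers to isomorphisms of additive group schemes, so the conjugating automorphisms $\alpha,\beta$ need not be multiplicative; but this is harmless, since I use them only to transport the bijectivity of the diagonal map to $\Theta_K$, whereas the multiplicativity that upgrades this bijection to an algebra isomorphism comes separately from $\Theta$ being a morphism of $\ka$-algebra schemes. It is exactly the perfectness of $K$ that is used, through the surjectivity of $\fr_K$; dropping it, $\Theta_K$ would remain injective but could fail to be surjective, in parallel with the phenomenon recorded in the remark following Theorem \ref{dime}.
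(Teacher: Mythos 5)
Your proof is correct, and it departs from the paper's argument in the final step. Both proofs begin identically: evaluate the natural transformation $\Theta$ of Notation \ref{nottheta} at $K$, note that $\Theta_K\colon \mathcal{B}(\ka)\otimes_{\ka}K\to\mathcal{B}(K)$ is a $K$-algebra homomorphism, and invoke Theorem \ref{classop} to control its shape. The divergence is in how surjectivity is obtained. The paper only extracts \emph{injectivity} of $\Theta_K$ from Theorem \ref{classop} and then finishes by a dimension count: applying Theorem \ref{dime} to \emph{both} perfect fields gives $\dim_{\ka}\mathcal{B}(\ka)=e=\dim_K\mathcal{B}(K)$, so an injective $K$-linear map between $e$-dimensional spaces is automatically onto. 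You instead exploit the full decomposition $\Theta_K=\alpha_K^{-1}\circ(\fr^{n_1},\dotsc,\fr^{n_e})_K\circ\beta_K^{-1}$ coming from Lemma \ref{diagonal}, observing that the conjugating group scheme automorphisms are bijective on points by functoriality and that each $\fr^{n_i}_K$ is bijective because $K$ is perfect. Your route buys a more self-contained argument (no second application of Theorem \ref{dime}, whose Claim 2 is where perfectness of $K$ would otherwise enter) and makes completely transparent that perfectness of $K$ is used exactly for surjectivity of $\fr_K$; the paper's route is shorter on the page given that Theorem \ref{dime} is already available, and stays at the level of linear algebra rather than unwinding the automorphisms $\alpha,\beta$. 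Your closing caveat that $\alpha,\beta$ are only additive, not multiplicative, and that this does not matter because multiplicativity of $\Theta_K$ is supplied separately, is exactly the right point of care.
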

\begin{proof}
By Theorem \ref{classop}, the $K$-algebra map
$$\Theta_K:\mathcal{B}(\ka)\otimes_{\ka}K\to \mathcal{B}(K)$$
(see Notation \ref{nottheta}) is one-to-one. Since both the fields $\ka$ and $K$ are perfect, by Theorem \ref{dime} we get:
$$\dim_{\ka}(\mathcal{B}(\ka))=e=\dim_{K}(\mathcal{B}(K)),$$
hence the map $\Theta_K$ is an isomorphism.
\end{proof}

\begin{remark}
We briefly discuss the case of a non-perfect field $\ka$.
We observe that there is a finite purely inseparable field extension $\ka\subseteq \ka'$ such that Theorem \ref{classop} holds for $\mathcal{B}_{\ka'}$, since Theorem \ref{classop} holds for $\mathcal{B}_{\ka^{\mathrm{ins}}}$, where $\ka^{\mathrm{ins}}$ is the inseparable closure of $\ka$, but $\mathcal{B}(\ka^{\mathrm{ins}})$ is defined over a finite extension of $\ka$.
\end{remark}
We are able now to give algebraic conditions explaining when a $\ka$-algebra scheme is of the form $B_{\otimes}$. Because of Corollary \ref{charac0}, it is natural to assume that the characteristic of $\ka$ is positive.
\begin{prop}\label{eqcon}
Assume that $\ka$ is a perfect field of positive characteristic and $\mathcal{B}$ is a $\ka$-algebra scheme such that $(\mathcal{B},+)=\ga^e$. Then, the following are equivalent.
\begin{enumerate}
\item The natural map $\Theta$ from Notation \ref{nottheta} is an isomorphism.

\item The functor $\mathcal{B}$ is isomorphic (as a $\ka$-algebra scheme) to the functor $B_{\otimes}$ as in Example \ref{example1} for some $\ka$-algebra $B$ (then, necessarily $B\cong \mathcal{B}(\ka)$).

\item There is a ``$\ka$-module scheme'' isomorphism:
$$\mathcal{B}\cong \left(\mathbb{S}_{\ka}\right)^{\times e}.$$
In particular, for each $\ka$-algebra $R$, we have $\mathcal{B}(R)\cong R^{\times e}$ as $R$-modules.

\item For any field extension $\ka\subseteq K$, we have:
$$\dim_K(\mathcal{B}(K))=e.$$

\item There is a field extension $\ka\subseteq K$ such that $K$ is not perfect and:
$$\dim_K(\mathcal{B}(K))=e.$$
\end{enumerate}
\end{prop}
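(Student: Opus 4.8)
The plan is to establish the cycle of implications $(1)\Rightarrow(2)\Rightarrow(3)\Rightarrow(4)\Rightarrow(5)\Rightarrow(1)$, in which the first four arrows are formal and essentially all the substance lies in the last one, which I would deduce from the classification in Theorem~\ref{classop}.

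For $(1)\Rightarrow(2)$ I would simply note that if $\Theta\colon\mathcal{B}(\ka)_{\otimes}\to\mathcal{B}$ is an isomorphism, then $\mathcal{B}\cong B_{\otimes}$ with $B:=\mathcal{B}(\ka)$, which is finite free of rank $e$ over the field $\ka$ by Theorem~\ref{dime}, so it is a legitimate instance of Example~\ref{example1}. For $(2)\Rightarrow(3)$, since $\dim_{\ka}B=e$ forces $B\cong\ka^{e}$ as a $\ka$-module, forgetting the multiplication gives $B_{\otimes}(R)=R\otimes_{\ka}B\cong R^{\times e}$ naturally in $R$, i.e.\ a $\ka$-module scheme isomorphism $\mathcal{B}\cong(\mathbb{S}_{\ka})^{\times e}$. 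Evaluating such an isomorphism at a field extension $K$ yields $(3)\Rightarrow(4)$. For $(4)\Rightarrow(5)$ it is enough to exhibit one non-perfect extension, and since $\ch(\ka)=p>0$ the field $K=\ka(t)$ works ($t$ has no $p$-th root).

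The heart of the argument is $(5)\Rightarrow(1)$. By Theorem~\ref{classop} (via Lemmas~\ref{diagonal} and~\ref{diagonaltofr}) there are automorphisms $\alpha,\beta$ of the additive group $\ga^{e}$ with $\alpha\Theta\beta=(\fr^{n_1},\dots,\fr^{n_e})$ for some $n_1,\dots,n_e\in\Nn$. Because $\alpha,\beta$ are bijective on $K$-points and each $x\mapsto x^{p^{n_i}}$ is injective on the field $K$, the map $\Theta_K\colon\mathcal{B}(\ka)\otimes_{\ka}K\to\mathcal{B}(K)$ is injective; and since $\Theta$ is a morphism of $\ka$-algebra schemes, $\Theta_K$ is in particular $K$-linear, so $\im(\Theta_K)$ is a $K$-subspace of dimension $\dim_K(\mathcal{B}(\ka)\otimes_{\ka}K)=\dim_{\ka}\mathcal{B}(\ka)=e$ (using Theorem~\ref{dime}). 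The hypothesis $\dim_K\mathcal{B}(K)=e$ then forces $\im(\Theta_K)=\mathcal{B}(K)$, so $\Theta_K$ is surjective as a map of sets. Using the bijections $\alpha_K,\beta_K$ once more, surjectivity of $\Theta_K$ is equivalent to surjectivity of $(\fr^{n_1},\dots,\fr^{n_e})$ on $K^{\times e}$, i.e.\ to $K^{p^{n_i}}=K$ for every $i$. As $K$ is non-perfect we have $K^{p}\subsetneq K$, so this holds only when every $n_i=0$; then $\Theta=(\fr^{0},\dots,\fr^{0})$ is an isomorphism, which is $(1)$.

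The main obstacle I expect is bookkeeping two distinct linear structures: the diagonalizing automorphisms $\alpha,\beta$ are only automorphisms of $(\mathcal{B},+)=\ga^{e}$ and need not respect scalar multiplication, whereas the dimension count must be run with the intrinsic $K$-algebra (hence $K$-linear) structure of $\Theta_K$. The safe route is to read injectivity and the final surjectivity criterion off the conjugated form $(\fr^{n_1},\dots,\fr^{n_e})$ at the level of underlying sets, but to compute $\dim_K\im(\Theta_K)$ using $\Theta_K$ itself; conflating the two module structures is the trap to avoid.
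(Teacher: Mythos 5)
Your proof is correct, and it follows the paper's overall plan: the chain $(1)\Rightarrow(2)\Rightarrow(3)\Rightarrow(4)\Rightarrow(5)$ is treated as routine (the paper simply calls these implications obvious), and all the substance is in $(5)\Rightarrow(1)$, which both you and the paper deduce from Theorem \ref{classop}. The difference lies in how the hypothesis $\dim_K(\mathcal{B}(K))=e$ is exploited. The paper identifies the intrinsic $K$-vector space structure of $\mathcal{B}(K)$ in the diagonalizing coordinates, namely $\mathcal{B}(K)\cong K\times\left(\fr^{n_2}_K\right)_*(K)\times\cdots\times\left(\fr^{n_e}_K\right)_*(K)$, where $\rho_*(K)$ is $K$ with scalars acting through $\rho$; since $\dim_K\left(\fr^{n}_K\right)_*(K)=[K:K^{p^n}]$, this yields $\dim_K(\mathcal{B}(K))>e$ once some $n_i\neq 0$ and $K$ is imperfect. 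You never need that identification: you use only injectivity of $\Theta_K$ (diagonal form plus injectivity of Frobenius on a field) and $K$-linearity for the \emph{intrinsic} structures, so that $(5)$ forces surjectivity of $\Theta_K$, which read through the diagonal form gives $K^{p^{n_i}}=K$, hence $n_i=0$. Your route is cleaner exactly at the point you flag as the trap: the paper's intermediate assertion that $\Theta_K$ is a ``$K$-linear isomorphism'' for \emph{every} field extension $K$ is, read literally, false whenever some $n_i>0$ and $K$ is imperfect, and justifying the paper's twisted decomposition rigorously requires computing the intrinsic scalar action in the new coordinates (e.g.\ via a Zariski-density argument), which the paper leaves implicit; what the paper's computation buys in exchange is an exact formula for $\dim_K(\mathcal{B}(K))$. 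One cosmetic slip at the end of your argument: from $n_1=\cdots=n_e=0$ you get $\alpha\Theta\beta=\mathrm{id}$, i.e.\ $\Theta=\alpha^{-1}\beta^{-1}$, not $\Theta=\left(\fr^0,\ldots,\fr^0\right)$ itself; the conclusion that $\Theta$ is an isomorphism of $\ka$-algebra schemes is unaffected.
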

\begin{proof}
The implications $(1)\Rightarrow (2)\Rightarrow (3)\Rightarrow (4)\Rightarrow (5)$ are obvious.

To show the implication $(5)\Rightarrow (1)$, we use Theorem \ref{classop} and obtain that up to an isomorphism, we have:
$$\Theta=\left(\fr^{n_1},\fr^{n_2},\ldots,\fr^{n_e}\right)$$
for some $n_1,\ldots,n_e\in \Nn$. We need to show that under the assumption of Item $(5)$, we have $n_i=0$ for all $i$. We note first that for each field extension $\ka\subseteq K$, the map
$$\Theta_K=\left(\fr^{n_1}_K,\fr^{n_2}_K,\ldots,\fr^{n_e}_K\right):K^{\times e}\to \mathcal{B}(K)$$
is a $K$-linear isomorphism, where additively $(\mathcal{B}(K),+)=(K^{\times e},+)$. It follows that there is the following isomorphism of vector spaces over $K$:
$$\mathcal{B}(K)\cong K\times \left(\fr^{n_2}_K\right)_*(K)\times \ldots \left(\fr^{n_e}_K\right)_*(K),$$
where for any field endomorphism $\rho:K\to K$, $\rho_*(K)$ denotes the $K$-vector space $(K,+)$ with the scalar multiplication given by:
$$a*x:=\rho(a)x.$$
Therefore, if $n_i\neq 0$ for some $i$ and $K$ is not perfect, then we get
$$\dim_K(\mathcal{B}(K))>e,$$
which contradicts the assumption from Item $(5)$.
\end{proof}

\section{$\mathcal{B}$-operators}\label{secoper}
In this section, we introduce our notion of operators on rings and we discuss the algebraic properties of these operators. Afterwards, we define prolongations and prove the necessary results about them, which will be crucial for the model-theoretic considerations of the next section.
Let $(\mathcal{B},\iota, \pi)$ be a fixed coordinate $\ka$-algebra scheme (see Definition \ref{bdef}) such that $(\mathcal{B},+)=\ga^e$.

\subsection{Some algebra of $\mathcal{B}$-operators}
We introduce below the main notion of this paper. We are interested in operators going from a ring to the same ring, but, for technical reasons, we also need to consider operators between two different rings.
\begin{definition}\label{bopdef}
Let $R$ and $T$ be $\ka$-algebras and $f:R\to T$ be a $\ka$-algebra map.
\begin{enumerate}
\item A \emph{$\mathcal{B}$-operator on $R$} is a $\ka$-algebra map $\partial:R\to \mathcal{B}(R)$ such that $\pi_R\circ \partial=\id$.

\item More generally, a \emph{$\mathcal{B}$-operator from $R$ to $T$} (\emph{of $f$}) is a $\ka$-algebra map $\partial:R\to \mathcal{B}(T)$ (such that $\pi_T\circ \partial=f$).

\item For a $\mathcal{B}$-operator $\partial:R \to T$, the \emph{ring of constants} of $\partial$ is defined as:
$$R^{\partial}:=\{r\in R\ |\ \partial(r)=\iota_T(\pi_T(\partial(r)))\}.$$
(Clearly, if $\partial$ is a $\mathcal{B}$-operator of $f$, then we also have:
$$R^{\partial}:=\{r\in R\ |\ \partial(r)=\iota_T(f(r))\}.)$$

\item A field with a $\mathcal{B}$-operator is called a \emph{$\mathcal{B}$-field}. Similarly, we define \emph{$\mathcal{B}$-rings}, \emph{$\mathcal{B}$-field extensions}, etc.
\end{enumerate}
\end{definition}
\begin{remark}\label{mscontext}
Let $R$ and $T$ be $\ka$-algebras.
\begin{enumerate}
\item Assume that $\mathcal{B}=B_{\otimes}$ for some $B$ as in Example \ref{firstex}(1). In this special case, the notion of a $\mathcal{B}$-operator coincides with the notion of a $B$-operator from \cite{BHKK}, which we quickly recall below.

Let $\{b_1,\ldots,b_e\}$ be a fixed $\ka$-basis of $B$ such that $\pi_B(b_1)=1$. For any $\ka$-algebra $R$ and any tuple $\partial=(\partial_1,\ldots,\partial_e)$ where $\partial_i:R\to R$, we say that $\partial$ is a \emph{$B$-operator} if the corresponding map
$$R\ni r\mapsto \partial_1(r)\otimes b_1+\ldots+\partial_e(r)\otimes b_e\in R\otimes_{\ka}B$$
is a $\ka$-algebra map such that $\partial_1=\id$.

\item Since $(\mathcal{B},+)=\ga^e$, any $\mathcal{B}$-operator from $R$ to $T$ corresponds to a certain sequence $\partial=(\partial_1,\ldots,\partial_e)$ of maps $R\to T$. There are polynomials $F_1,\ldots,F_e\in \ka[X_1,Y_1,\ldots,X_e,Y_e]$  such that $\partial$ is a $\mathcal{B}$-operator if and only if for each $i$, the map $\partial_i$ is additive and for each $x,y\in R$, we have:
$$\partial_i(xy)=F_i\left(\partial_1(x),\partial_1(y),\ldots,\partial_e(x),\partial_e(y)\right).$$
From the description above, there is a clear relation between our $\mathcal{B}$-operators and Buium's jet operators from \cite{Bui3}. If $\ka$ is a field, then
(by Example \ref{firstex}(3)) jet operators coincide with $\mathcal{B}$-operators for $e=2$. If $\ka$ is an arbitrary ring, then a jet operator need not be additive, for example $\pi$-derivations from \cite[Example(d)]{Bui3} are not additive.
\end{enumerate}
\end{remark}
\begin{example}\label{bopex}
We fix a $\ka$-algebra $R$ and give a few examples of $\mathcal{B}$-operators on $R$.
\begin{enumerate}
\item There is always the ``zero $\mathcal{B}$-operator'' on $R$ given by the structure homomorphism $\iota_R$.

\item By Remark \ref{mscontext}(1), any $B$-operator is a $\mathcal{B}$-operator for $B:=\mathcal{B}(\ka)$. We mention below two most standard examples of such $\mathcal{B}$-operators.
\begin{enumerate}
\item If $B=\ka^{\times e}$, then $(\id,\partial_2,\ldots,\partial_e)$ is a $B$-operator on $R$ if and only if $\partial_2,\ldots,\partial_e$ are $\ka$-algebra endomorphisms of $R$.

\item If $B=\ka[X]/(X^2)$, then $(\id,\partial)$ is a $B$-operator on $R$ if and only if $\partial$ is a derivation on $R$ vanishing on $\ka$.
\end{enumerate}

\item The notion of a $\mathcal{B}$-operator covers the case of \emph{derivations of Frobenius}, which do not fit to the set-up from \cite{MS2}. Suppose that $\ka=\Ff_p$. An additive map $\partial$ on $R$ is a derivation of Frobenius, if for all $x,y\in R$, we have:
    $$\partial(xy)=x^p\partial(y)+y^p\partial(y).$$
    It is clear that an arbitrary map $\partial:R\to R$ is a derivation of Frobenius if and only if the tuple $(\id,\partial)$
is a $\mathcal{B}$-operator for $\mathcal{B}$ as in Example \ref{firstex}(2).

More generally, if $\mathrm{char}(\ka)=p>0$, then we can consider
$$\mathcal{B}:=\left(\left(\ka[X]/\left(X^2\right)\right)_{\otimes}\right)^{\fr}$$
as in Example \ref{trexam}(1). Then, a tuple $(\id,\partial)$ is a $\mathcal{B}$-operator if and only if $\partial$ is a \emph{derivations of Frobenius over $\ka$} that is $\partial$ is a derivation of Frobenius and moreover for any $a\in \ka$ and $r\in R$, it satisfies:
$$\partial(ax)=a^p\partial(x).$$

\item Generalizing the case of Item $(3)$ above, we can consider ``$B$-operators of Frobenius'' by replacing the ring of dual numbers $\ka[X]/(X^2)$ with any finite $\ka$-algebra $B$ as in Example \ref{firstex}(1) and considering $\mathcal{B}$-operators, for:
    $$\mathcal{B}:=\left(B_{\otimes}\right)^{\fr}.$$
\end{enumerate}
\end{example}


Let us consider now the kernel of $\pi$ as the following ``scheme of ideals'':
$$\ker(\pi)(R):=\ker\left(\pi_R:\mathcal{B}(R)\ra R\right).$$
We state below an important assumption on $\mathcal{B}$, which we will often make. From now on, we focus on the case of positive characteristic, since (by Corollary \ref{charac0}) if $\mathrm{char}(\ka)=0$, then $\mathcal{B}$-operators are the same as $\mathcal{B}(\ka)$-operators, which were analyzed in \cite{MS2}.  We denote the Frobenius morphism on $\mathcal{B}$ by $\fr_{\mathcal{B}}$.
\begin{assumption}\label{ass2}
Let $\mathrm{char}(\ka)=p>0$ and:
$$\fr_{\mathcal{B}}\left(\ker(\pi)\right)=0.$$
\end{assumption}
\begin{remark}\label{assrem}
We discuss here some consequences of Assumption \ref{ass2}.
\begin{enumerate}

\item In the case of $\mathcal{B}=B_{\otimes}$, Assumption \ref{ass2} is equivalent to:
$$\fr_{B}\left(\ker(\pi_B)\right)=0$$
and its meaning was discussed in \cite[Lemma 2.6]{BHKK}, which says that it is equivalent to $B$ being local and such that the nilradical of $B$ coincides with $\ker(\fr_B)$. It was also noticed in \cite[Lemma 2.6]{BHKK} that $B$ is local if and only if $\ker(\pi_B)^e=0$ (the reader is advised to recall our notational conventions from the beginning of Section \ref{secrings}).


\item The condition ``$\ker(\pi)^e=0$'' makes sense for ring schemes as well and it means that the $e$-fold multiplication morphism:
$$m_e:\mathcal{B}^{\times e}\ra \mathcal{B}$$
vanishes on the subscheme $\ker(\pi)^{\times e}$. Since this condition can be checked on any infinite field extending $\ka$, Item $(1)$ above yields that Assumption \ref{ass2} implies the condition $\ker(\pi)^{e}=0$, similarly as in the case of $\mathcal{B}=B_{\otimes}$.

\end{enumerate}
\end{remark}
All the remaining results of this subsection originate from \cite{BHKK}, where the case of $\mathcal{B}=B_{\otimes}$ (in the terminology of this paper) was considered. We attempt to present these results here (in the more general context of an arbitrary $\mathcal{B}$) in a most economical way without aiming for the greatest generality possible. Therefore, at the same time we generalize and simplify some of the results and proofs from \cite{BHKK}.
\begin{lemma}\label{ver27}
Suppose $R,S$ are $\ka$-algebras,  Assumption \ref{ass2} holds for $\mathcal{B}$, and $\partial$ is a $\mathcal{B}$-operator from $R$ to $S$ such that the map
$$\partial_0:=\pi_S\circ \partial:R\ra S$$
is an embedding. Then, we have the following.
\begin{enumerate}
\item If $R$ and $S$ are domains, then $\partial$ extends uniquely to a $\mathcal{B}$-operator on their fields of fractions.

\item If $\partial_0:R\to S$ is an \'{e}tale extension of rings, then $\partial$ extends uniquely to a $\mathcal{B}$-operator on $S$.

\item If $\partial_0:R\to S$ is a formally smooth (or $0$-smooth) extension of rings, then $\partial$ extends to a $\mathcal{B}$-operator on $S$.
\end{enumerate}
\end{lemma}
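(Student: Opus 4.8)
The overall strategy is to exploit the fact that a $\mathcal{B}$-operator $\partial$ is just a $\ka$-algebra section of $\pi_S$ lying over $\partial_0$, so that extending $\partial$ amounts to extending a $\ka$-algebra homomorphism $R \to \mathcal{B}(S)$ along standard ring-theoretic constructions (localization, \'{e}tale base change, formally smooth lifting) while keeping the compatibility $\pi \circ \partial = \partial_0$. The key point making everything work is Assumption \ref{ass2}: since $\fr_{\mathcal{B}}$ kills $\ker(\pi)$, the nilpotent part of $\mathcal{B}(S)$ has bounded nilpotency ($\ker(\pi)^e = 0$ by Remark \ref{assrem}) and, more importantly, the obstruction to lifting factors through a Frobenius-twisted module, which forces uniqueness and makes the smoothness hypotheses applicable.

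First I would treat part $(1)$, which is the engine for the rest. Given domains $R, S$ with fraction fields $\mathrm{Frac}(R), \mathrm{Frac}(S)$, I want to extend the $\ka$-algebra map $\partial : R \to \mathcal{B}(S)$ to $\mathrm{Frac}(R) \to \mathcal{B}(\mathrm{Frac}(S))$. Writing $\partial = (\partial_1, \dots, \partial_e)$ with $\partial_1 = \partial_0$ an embedding, the natural guess is the quotient rule: for $a/b$ with $b \neq 0$, set $\partial(a/b) := \partial(a) \cdot \partial(b)^{-1}$ computed in $\mathcal{B}(\mathrm{Frac}(S))$, where $\partial(b)$ is invertible because $\pi_S(\partial(b)) = \partial_0(b) \neq 0$ is a unit in $\mathrm{Frac}(S)$, and an element of $\mathcal{B}$ is a unit as soon as its image under $\pi$ is (the complementary part sits in the nilpotent ideal $\ker(\pi)$, which is nilpotent by Assumption \ref{ass2}). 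One then checks well-definedness and multiplicativity formally, and uniqueness is immediate since any extension must satisfy the quotient rule. The main obstacle here, and in the whole lemma, is verifying that $\ker(\pi)(\mathrm{Frac}(S))$ is genuinely a nil ideal so that $1 + \ker(\pi)$ consists of units; this is exactly where $\ker(\pi)^e = 0$ is used.

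For part $(2)$, the plan is to use the universal property of \'{e}tale morphisms. An \'{e}tale extension $\partial_0 : R \to S$ is formally \'{e}tale, so $\ka$-algebra maps out of $S$ lift uniquely against nilpotent (more generally, any square-zero) thickenings; I would apply this to the surjection $\mathcal{B}(S) \twoheadrightarrow S$ with kernel $\ker(\pi)(S)$, which is nilpotent by Assumption \ref{ass2}, filtering it into square-zero steps via the powers $\ker(\pi)^i$. The given map $\partial : R \to \mathcal{B}(S)$ together with the identity $S \to S$ then produces, by the unique lifting property, a $\ka$-algebra map $S \to \mathcal{B}(S)$ extending $\partial$ and sectioning $\pi_S$; uniqueness of the lift gives uniqueness of the extended operator. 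Part $(3)$ is the same argument with ``unique lifting'' weakened to ``existence of a lifting'': formal smoothness (or $0$-smoothness) guarantees that maps lift against nilpotent thickenings, again filtering $\ker(\pi)(S)$ by its powers, so $\partial$ extends to a $\mathcal{B}$-operator on $S$, but without the uniqueness clause. In all three parts the crux is reducing to nilpotent thickenings and invoking Assumption \ref{ass2}; once that reduction is in place, the classical lifting theorems for \'{e}tale and formally smooth extensions finish the argument.
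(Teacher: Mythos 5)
Your proposal is correct, and its engine --- reducing everything to the nilpotency $\ker(\pi)^e=0$ supplied by Assumption \ref{ass2} via Remark \ref{assrem}(2), and then applying lifting properties along nilpotent thickenings --- is exactly the mechanism of the paper's proof, which for parts (2) and (3) simply defers to \cite[Lemma 2.7]{BHKK} along the lines you describe. The one place you genuinely diverge is part (1): the paper handles it with the \emph{same} lifting machinery as part (2), observing that the localization $R\to \mathrm{Frac}(R)$ is \'{e}tale (with ``$2$-nilpotent'' replaced by ``$e$-nilpotent'' in the definition of \'{e}taleness) and that $\ker\left(\pi:\mathcal{B}(\mathrm{Frac}(S))\to \mathrm{Frac}(S)\right)^e=0$, so a unique lift completing the relevant square exists; you instead build the extension by hand via the quotient rule, using that any element of $\mathcal{B}(\mathrm{Frac}(S))$ whose image under $\pi$ is a unit is itself a unit because $\ker(\pi)$ is nil. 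The two arguments are equivalent in substance --- your construction unpacks the universal property of localization that underlies the paper's appeal to \'{e}taleness --- with yours being more elementary and self-contained, and the paper's having the advantage of uniformity, since one diagram-lifting statement then disposes of (1), (2) and (3) at once. (Your introductory remark that ``the obstruction to lifting factors through a Frobenius-twisted module'' plays no role in either argument and can be dropped; the nilpotency of $\ker(\pi)$ is all that is used.)
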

\begin{proof}
In the context of $B$-operators, this result appeared as \cite[Lemma 2.7]{BHKK} and the arguments from there work in our case as well. We still give below a proof of Item $(1)$ to make clear how Assumption \ref{ass2} is used.

We have the following commutative diagram:
\begin{equation*}
 \xymatrix{  R \ar[d]_{}  \ar@/^2pc/[rr]^{\partial^{\mathrm{Frac}}} \ar[r]^-{\partial} \ar[rd]^-{\partial_0} &  \mathcal{B}(S)  \ar[d]_{} \ar[r]^{} &   \mathcal{B}(\mathrm{Frac}(S)) \ar[d]_{\pi} \\
\mathrm{Frac}(R)  \ar@/_2pc/[rr]^{\mathrm{Frac}(\partial_0)} &  S \ar[r]^{} & \mathrm{Frac}(S),}
\end{equation*}
where the map $\mathrm{Frac}(\partial_0)$ exists, since $\partial_0$ is an embedding. We need to show that there is a unique map
$$\widetilde{\partial}:\mathrm{Frac}(R)\ra \mathcal{B}(\mathrm{Frac}(S))$$
such that the above diagram expanded by this map is still commutative. We simplify the above diagram to the following one:
\begin{equation*}
 \xymatrix{  R \ar[d]_{}   \ar[r]^-{\partial^{\mathrm{Frac}}}  &   \mathcal{B}(\mathrm{Frac}(S)) \ar[d]_{\pi} \\
\mathrm{Frac}(R) \ar@{-->}[ru]^{\widetilde{\partial}}  \ar[r]^{} & \mathrm{Frac}(S).}
\end{equation*}
By Remark \ref{assrem}(2), we get that $\ker(\pi)^e=0$. Since the map $R\to \mathrm{Frac}(R)$ is \'{e}tale (note that in the definition of  \'{e}tality from e.g. \cite{mat} one can replace ``2-nilpotent'' with ``$e$-nilpotent'' and obtain an equivalent definition) and
$$\ker\left(\pi:\mathcal{B}(\mathrm{Frac}(S))\to \mathrm{Frac}(S)\right)^e=0$$
(this is where we use Assumption \ref{ass2}, or rather its consequence from Remark \ref{assrem}(2)), we get that there is a unique map $\widetilde{\partial}$ completing this last diagram.
\end{proof}
\begin{lemma}\label{ver28}
Suppose that Assumption \ref{ass2} is satisfied and let $\partial:R\to \mathcal{B}(T)$ be a $B$-operator.
Then, we have:
$$R^p\subseteq R^{\partial}.$$
\end{lemma}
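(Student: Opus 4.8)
The plan is to fix an arbitrary $r\in R$, set $a:=\partial(r)\in\mathcal{B}(T)$, and reduce the statement $r^p\in R^{\partial}$ to the single fact that $\partial(r^p)=a^p$ lies in the image of $\iota_T$. Indeed, by Definition \ref{bopdef}(3) we have $r^p\in R^{\partial}$ exactly when $\partial(r^p)=\iota_T\bigl(\pi_T(\partial(r^p))\bigr)$, and since $\pi_T\circ\iota_T=\id$ this holds if and only if $\partial(r^p)\in\im(\iota_T)$. So the whole problem becomes: show that $a^p\in\im(\iota_T)$.

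First I would use that $\partial$ is a $\ka$-algebra homomorphism, so it is multiplicative and hence $\partial(r^p)=\partial(r)^p=a^p$. Next, decompose $a=a_0+a_1$, where $a_0:=\iota_T(\pi_T(a))\in\im(\iota_T)$ and $a_1:=a-a_0$; since $\pi_T\circ\iota_T=\id$ we get $\pi_T(a_1)=\pi_T(a)-\pi_T(a)=0$, i.e.\ $a_1\in\ker(\pi_T)$. Because $\mathcal{B}(T)$ is a $\ka$-algebra and $\ch(\ka)=p>0$, its Frobenius $x\mapsto x^p$ is additive, which gives
$$a^p=(a_0+a_1)^p=a_0^p+a_1^p.$$

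The decisive step is Assumption \ref{ass2}: the identity $\fr_{\mathcal{B}}(\ker(\pi))=0$ is an identity of ring schemes, whose content on $T$-points is precisely that $x^p=0$ for every $x\in\ker(\pi_T)$. Applying this to $a_1\in\ker(\pi_T)$ yields $a_1^p=0$, so $a^p=a_0^p$. Finally $\im(\iota_T)$ is a subring of $\mathcal{B}(T)$ (as $\iota_T$ is a ring homomorphism) and is therefore closed under $p$-th powers; since $a_0\in\im(\iota_T)$ we conclude $\partial(r^p)=a^p=a_0^p\in\im(\iota_T)$, which is exactly $r^p\in R^{\partial}$.

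I do not expect a real obstacle here. The only point needing care is the correct reading of $\fr_{\mathcal{B}}(\ker(\pi))=0$, namely that it is a functorial statement whose evaluation at $T$ produces the vanishing $a_1^p=0$ used above. The mild conceptual content of the argument is that additivity of the Frobenius in characteristic $p$ is what converts the multiplicative relation $\partial(r^p)=\partial(r)^p$ into a decomposition in which the nilpotent $\ker(\pi_T)$-part is separated off and killed by Assumption \ref{ass2}.
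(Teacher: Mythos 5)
Your proof is correct and is essentially the paper's own argument: the paper phrases it as the commutativity of the diagram expressing $\partial\circ\fr_R=\iota_T\circ\fr_T\circ\pi_T\circ\partial$, and your element-wise computation (splitting $\partial(r)$ into its $\im(\iota_T)$-part and $\ker(\pi_T)$-part, using additivity of Frobenius, and killing the nilpotent part via Assumption \ref{ass2}) is precisely the verification that this diagram commutes. Nothing is missing.
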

\begin{proof}
Since $\fr_{\mathcal{B}(T)}(\ker(\pi_T))=0$, the following diagram commutes:
\begin{equation*}
 \xymatrix{  R  \ar[d]_{\partial} \ar[rr]^{\fr_R} &  &  R  \ar[d]^{\partial} \\
  \mathcal{B}(T)  \ar[d]_{\pi_T} \ar[rr]^{\fr_{\mathcal{B}(T)}} &  & \mathcal{B}(T) \\
T\ar[rr]^{\fr_T}  &  &  T \ar[u]^{\iota_T},}
\end{equation*}
which finishes the proof.
\end{proof}
\begin{lemma}\label{ver29}
Let $\ka\subseteq M\subseteq N$ be a tower of fields and $\partial$ be a $\mathcal{B}$-operator from $M$ to $N$ (of the inclusion $M\subseteq N$) such that $N^p\subseteq M^{\partial}$. Then, $\partial$ extends to a $\mathcal{B}$-operator on $N$.
\end{lemma}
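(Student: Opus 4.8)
The plan is to reduce the extension problem to lifting generators along a purely inseparable tower, and then to observe that the resulting constraints are automatically satisfied. First I observe that the hypothesis $N^p\subseteq M^{\partial}$ forces $N^p\subseteq M$, so $N/M$ is purely inseparable of exponent at most one. I may therefore fix a $p$-basis $\left(b_j\right)_{j\in J}$ of $N$ over $M$; since $b_j^p\in N^p\subseteq M$, this provides an $M$-algebra presentation
\[
N\;\cong\;M\bigl[(X_j)_{j\in J}\bigr]\big/\bigl(X_j^p-b_j^p:\,j\in J\bigr),\qquad X_j\mapsto b_j .
\]
Viewing $\mathcal{B}(N)$ as an $M$-algebra through the ring homomorphism $\partial\colon M\to\mathcal{B}(N)$, a $\ka$-algebra map $\widetilde{\partial}\colon N\to\mathcal{B}(N)$ with $\widetilde{\partial}|_M=\partial$ is precisely an $M$-algebra map out of this presentation. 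By the universal property of the quotient of a polynomial algebra, giving such a map amounts to choosing elements $c_j:=\widetilde{\partial}(b_j)\in\mathcal{B}(N)$ subject only to the relations $c_j^{\,p}=\partial\bigl(b_j^p\bigr)$, where the $p$-th power is taken in the ring $\mathcal{B}(N)$ and $\partial(b_j^p)$ denotes the image of $b_j^p\in M$ under the structure map.

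The key point is that these relations hold for the obvious lift $c_j:=\iota_N(b_j)$. Indeed, $\iota_N\colon N\to\mathcal{B}(N)$ is a ring homomorphism between rings of characteristic $p$, hence commutes with the Frobenius, so
\[
c_j^{\,p}=\iota_N(b_j)^{p}=\iota_N\bigl(b_j^p\bigr).
\]
On the other hand $b_j^p\in N^p\subseteq M^{\partial}$, which by the definition of the ring of constants of the inclusion $M\subseteq N$ means exactly $\partial\bigl(b_j^p\bigr)=\iota_N\bigl(b_j^p\bigr)$. Thus $c_j^{\,p}=\partial\bigl(b_j^p\bigr)$, the presentation relations are respected, and we obtain a well-defined $\ka$-algebra map $\widetilde{\partial}\colon N\to\mathcal{B}(N)$ extending $\partial$ and satisfying $\widetilde{\partial}(b_j)=\iota_N(b_j)$ (this works verbatim for infinite $J$).

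It remains to check that $\widetilde{\partial}$ is a $\mathcal{B}$-operator on $N$, i.e.\ that $\pi_N\circ\widetilde{\partial}=\id$. The composite $\pi_N\circ\widetilde{\partial}$ is a $\ka$-algebra endomorphism of $N$; it restricts on $M$ to $\pi_N\circ\partial$, the inclusion, and it sends each $b_j$ to $\pi_N\bigl(\iota_N(b_j)\bigr)=b_j$. Since $N$ is generated over $M$ by the $b_j$, we conclude $\pi_N\circ\widetilde{\partial}=\id$. The step requiring the most care is the translation of the extension problem into the relations $c_j^{\,p}=\partial(b_j^p)$ via the $p$-basis presentation; once this is set up the verification is immediate, and, in contrast with Lemmas \ref{ver27} and \ref{ver28}, Assumption \ref{ass2} is \emph{not} needed here, since $c_j^{\,p}=\partial(b_j^p)$ follows directly from $\iota_N$ being a ring homomorphism together with the hypothesis $N^p\subseteq M^{\partial}$.
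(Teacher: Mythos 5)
Your proof is correct and rests on exactly the same key idea as the paper's: extend $\partial$ by sending each $p$-th root to its image under $\iota_N$, which is legitimate precisely because the hypothesis $N^p\subseteq M^{\partial}$ gives $\partial(b_j^p)=\iota_N(b_j^p)$, while $p$-th powers commute with any ring homomorphism, so the defining relations are respected. The only difference is packaging: the paper adjoins one root $t^{1/p}$ at a time by induction, whereas you lift all generators simultaneously through a $p$-basis presentation of $N$ over $M$ (and you make explicit the check that $\pi_N\circ\widetilde{\partial}=\id$, which the paper leaves implicit).
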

\begin{proof}
Since $N^p\subseteq M$, $N$ is a purely inseparable extension of $M$, and we may assume by induction that  $M=M(t^{1/p})$ for some $t\in M$. Since $t\in N^p$, by our assumption we get that $t\in M^{\partial}$, hence $\partial(t)=\iota_M(t)$ (see the second displayed line in Definition \ref{bopdef}(3)). Since we have:
$$\iota_N(t^{1/p})^p=\iota_N(t)=\iota_M(t)=\partial(t),$$
we may set $\partial_N(t^{1/p}):=\iota_N(t^{1/p})$ to obtain a $\mathcal{B}$-operator on $N$ extending $\partial$.
\end{proof}
The crucial application of Assumption \ref{ass2} is contained in the statement below.
\begin{prop}\label{ver215}
Suppose that Assumption \ref{ass2} holds,  $\ka\subseteq M\subseteq N\subseteq \Omega$ is a tower of fields, and we have the following commuting diagram of $\ka$-algebra maps:
\begin{equation*}
 \xymatrix{  M  \ar[d]_{} \ar[rr]^{\partial}  &  &  \mathcal{B}(N)  \ar[d]^{} \\
N  \ar[rrd]_{} \ar[rr]^{\partial_{\Omega}}  &  &  \mathcal{B}(\Omega)  \ar[d]^{\pi_{\Omega}}\\
 &  &  \Omega,}
\end{equation*}
that is: the $\mathcal{B}$-operator $\partial$ from $M$ to $N$ extends to a $\mathcal{B}$-operator $\partial_{\Omega}$ of the inclusion $N\subseteq \Omega$. Then, $\partial$ extends to a $\mathcal{B}$-operator $\partial_N$ on $N$ as well.
\end{prop}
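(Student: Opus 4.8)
The plan is to locate a large intermediate field $M \subseteq M(N^p) \subseteq N$ on which the given extension $\partial_{\Omega}$ already takes values in $\mathcal{B}(N)$ rather than merely in $\mathcal{B}(\Omega)$, and then to dispose of the remaining purely inseparable extension $N/M(N^p)$ by invoking Lemma \ref{ver29}. Here $M(N^p)$ denotes the subfield of $N$ generated by $M$ together with all $p$-th powers of elements of $N$. Note that one cannot simply take $\partial_N = \partial_{\Omega}$, since a priori $\partial_{\Omega}$ takes some values outside $\mathcal{B}(N)$; the whole point is to show that the portion of its image we need can be pushed back down into $\mathcal{B}(N)$.

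First I would record that $\mathcal{B}(N)$ sits inside $\mathcal{B}(\Omega)$ as a subring: since $(\mathcal{B},+) = \ga^e$ and the inclusion $N \hookrightarrow \Omega$ is injective, functoriality identifies $\mathcal{B}(N)$ with a subring of $\mathcal{B}(\Omega)$. The key step is then to show that $\partial_{\Omega}(M(N^p)) \subseteq \mathcal{B}(N)$. This rests on two observations. On the one hand, $\partial_{\Omega}$ extends $\partial$ by hypothesis, so $\partial_{\Omega}(M) = \partial(M) \subseteq \mathcal{B}(N)$. On the other hand, applying Lemma \ref{ver28} to $\partial_{\Omega}$ (this is exactly where Assumption \ref{ass2} enters) gives $N^p \subseteq N^{\partial_{\Omega}}$, so $\partial_{\Omega}$ is constant on $N^p$, i.e. $\partial_{\Omega}(x) = \iota_{\Omega}(x) \in \mathcal{B}(N)$ for every $x \in N^p$. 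Since $\partial_{\Omega}$ is a $\ka$-algebra homomorphism and $\mathcal{B}(N)$ is a subring, its image on the subring generated by $M \cup N^p$ lands in $\mathcal{B}(N)$; and because inverses are respected ($\partial_{\Omega}(y)^{-1} = \partial_{\Omega}(y^{-1})$ with $y^{-1} \in M(N^p)$), the image of the whole field $M(N^p)$ lands in $\mathcal{B}(N)$ as claimed.

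Consequently $\partial' := \partial_{\Omega}|_{M(N^p)}$ is a $\mathcal{B}$-operator from $M(N^p)$ to $N$ of the inclusion (naturality of $\pi$ shows that $\pi_N \circ \partial'$ is the inclusion $M(N^p) \hookrightarrow N$). The extension $N/M(N^p)$ is purely inseparable, because $N^p \subseteq M(N^p)$, and the constancy established above yields $N^p \subseteq M(N^p)^{\partial'}$. Thus Lemma \ref{ver29} applies and produces a $\mathcal{B}$-operator $\partial_N$ on $N$ extending $\partial'$, and hence extending $\partial$ since $M \subseteq M(N^p)$ and $\partial'|_M = \partial$. The main obstacle is the middle step -- descending the codomain of $\partial_{\Omega}$ from $\Omega$ to $N$ over $M(N^p)$ -- and it is precisely Assumption \ref{ass2}, through the constancy of $\partial_{\Omega}$ on $p$-th powers, that makes this descent possible; once the codomain is controlled, the residual purely inseparable part is handled routinely by Lemma \ref{ver29}.
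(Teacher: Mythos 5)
Your argument is correct in substance, but it takes a genuinely different route from the paper's. The paper first picks an intermediate field $N_0$ with $M\subseteq N_0$ separable and $N_0\subseteq N$ purely inseparable, reduces to $M=N_0$ via Lemma \ref{ver27}(3) (separable extensions are formally smooth), and only then applies Lemma \ref{ver28} and Lemma \ref{ver29}. You bypass the separable/purely-inseparable decomposition and Lemma \ref{ver27}(3) entirely: you apply Lemma \ref{ver28} directly to the given $\partial_{\Omega}$ to get constancy on $N^p$, use this to descend the codomain of $\partial_{\Omega}|_{M(N^p)}$ from $\mathcal{B}(\Omega)$ to $\mathcal{B}(N)$, and then finish with Lemma \ref{ver29} applied to the exponent-one purely inseparable extension $N/M(N^p)$. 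This buys something real: you never need the existence of the decomposition $M\subseteq N_0\subseteq N$ (a nontrivial point for transcendental extensions), and since you only ever work with restrictions of the single given operator $\partial_{\Omega}$, no compatibility questions between different extensions arise. The price is that you must control the codomain by hand, and that is where your one gap sits.

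The gap is the inverse step. You correctly show that the \emph{subring} of $N$ generated by $M\cup N^p$ maps into $\mathcal{B}(N)$, but the passage to the field $M(N^p)$ is not justified by the identity $\partial_{\Omega}(b^{-1})=\partial_{\Omega}(b)^{-1}$: that identity only says $\partial_{\Omega}(b^{-1})$ is the inverse, \emph{computed in} $\mathcal{B}(\Omega)$, of an element of $\mathcal{B}(N)$, and a subring need not contain inverses taken in a larger ring (compare $\mathbb{Z}\subseteq\mathbb{Q}$); note also that the subring $M[N^p]$ is in general strictly smaller than $M(N^p)$, so the issue cannot be dodged. Fortunately the repair is short and uses only material already in play. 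For $b\in M[N^p]$ nonzero, write $\partial_{\Omega}(b)=\iota_N(b)+\nu$ with $\nu\in\ker(\pi_N)$ (this uses $\pi_N(\partial_{\Omega}(b))=b$, by naturality of $\pi$). By Remark \ref{assrem}(2), Assumption \ref{ass2} gives $\ker(\pi_N)^e=0$, and $\iota_N(b)$ is a unit of $\mathcal{B}(N)$, so $\partial_{\Omega}(b)=\iota_N(b)\left(1+\iota_N(b)^{-1}\nu\right)$ is a unit of $\mathcal{B}(N)$ (finite geometric series). Since inverses in $\mathcal{B}(\Omega)$ are unique, $\partial_{\Omega}(b^{-1})=\partial_{\Omega}(b)^{-1}\in\mathcal{B}(N)$. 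With this inserted, your proof is complete.
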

\begin{proof}
There is a subfield $N_0\subseteq N$ such that the field extension $M\subseteq N_0$ is separable (not necessarily algebraic) and the extension $N_0\subseteq N$ is purely inseparable. By Lemma \ref{ver27}(3) (a separable field extension is formally smooth, see \cite[Theorem 26.9]{mat}), without loss of generality we can assume that $M=N_0$. By Lemma \ref{ver28}, we have $N^p\subseteq N^{\partial_{\Omega}}$.
Since $N^p\subseteq M$ and $\partial_{\Omega}$ extends $\partial$, we get that $N^p\subseteq M^{\partial}$. Applying Lemma \ref{ver29}, we obtain the required $\mathcal{B}$-operator $\partial_N$.
\end{proof}
Proceeding exactly as in \cite[Section 2.3]{BHKK} (after replacing the functor $B_{\otimes}$ with the functor $\mathcal{B}$), we obtain the following.
\begin{prop}\label{ver220}
If $K$ is a $\mathcal{B}$-field and $K\subseteq R,K\subseteq S$ are $\mathcal{B}$-ring extensions, then there is a unique $\mathcal{B}$-operator on the ring $R\otimes_KS$ such that the natural maps $R\to R\otimes_KS,S\to R\otimes_KS$ commute with the corresponding $\mathcal{B}$-operators.
\end{prop}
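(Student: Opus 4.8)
The plan is to build the operator on $R\otimes_K S$ by a diagram chase exploiting the functoriality of the functor $\mathcal{B}$, the naturality of $\pi$, and the universal property of $R\otimes_K S$ as the coproduct in the category of (commutative) $K$-algebras. Throughout I write $j_R\colon R\to R\otimes_K S$ and $j_S\colon S\to R\otimes_K S$ for the canonical maps and $\partial_K,\partial_R,\partial_S$ for the given $\mathcal{B}$-operators on $K$, $R$, $S$.

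First, applying $\mathcal{B}$ to $j_R$ and $j_S$ and composing with the given operators, I set
$$\phi_R:=\mathcal{B}(j_R)\circ\partial_R\colon R\to \mathcal{B}(R\otimes_K S),\qquad \phi_S:=\mathcal{B}(j_S)\circ\partial_S\colon S\to \mathcal{B}(R\otimes_K S).$$
These are $\ka$-algebra maps. The key point is that they agree on $K$: because $K\subseteq R$ and $K\subseteq S$ are $\mathcal{B}$-extensions, the restriction of $\partial_R$ to $K$ equals $\mathcal{B}(K\to R)\circ\partial_K$ and likewise for $S$, so functoriality of $\mathcal{B}$ gives $\phi_R|_K=\mathcal{B}(K\to R\otimes_K S)\circ\partial_K=\phi_S|_K$. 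This common map equips $\mathcal{B}(R\otimes_K S)$ with a $K$-algebra structure for which both $\phi_R$ and $\phi_S$ are $K$-algebra homomorphisms.

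Next, I invoke the universal property of the tensor product: since $R\otimes_K S$ is the coproduct of $R$ and $S$ in the category of $K$-algebras, there is a unique $K$-algebra map $\partial\colon R\otimes_K S\to\mathcal{B}(R\otimes_K S)$ with $\partial\circ j_R=\phi_R$ and $\partial\circ j_S=\phi_S$. It then remains to verify that $\partial$ is a $\mathcal{B}$-operator, i.e.\ $\pi_{R\otimes_K S}\circ\partial=\id$. Precomposing with $j_R$ and using the naturality of $\pi$ (a natural transformation from $\mathcal{B}$ to the identity functor $\mathbb{S}_{\ka}$) together with $\pi_R\circ\partial_R=\id$ yields $\pi_{R\otimes_K S}\circ\partial\circ j_R=j_R\circ\pi_R\circ\partial_R=j_R$, and symmetrically for $j_S$. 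Since the images of $j_R$ and $j_S$ generate $R\otimes_K S$ as a ring, and both $\pi_{R\otimes_K S}\circ\partial$ and $\id$ are $\ka$-algebra maps agreeing on these images, they coincide. The same generation argument (equivalently, the uniqueness clause of the universal property) gives uniqueness of $\partial$ among $\mathcal{B}$-operators compatible with $j_R$ and $j_S$.

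I expect the only delicate bookkeeping to be the compatibility check in the second paragraph: one must make sure that the $K$-algebra structure on $\mathcal{B}(R\otimes_K S)$ used in the universal property is exactly the one induced through $\partial_K$ (and not the one induced through $\iota$), and that this is forced precisely by the hypothesis that $R$ and $S$ are $\mathcal{B}$-extensions of $K$. No positivity of the characteristic and no instance of Assumption \ref{ass2} are needed here; the argument is purely formal, matching the reference to \cite[Section 2.3]{BHKK}.
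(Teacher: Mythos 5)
Your argument is correct and is essentially the argument the paper itself relies on: the paper gives no proof but defers to \cite[Section 2.3]{BHKK}, where the operator on $R\otimes_K S$ is obtained by exactly this formal device --- viewing $R\otimes_K S$ as the coproduct of $K$-algebras, equipping $\mathcal{B}(R\otimes_K S)$ with the $K$-algebra structure coming through $\partial_K$, and checking $\pi\circ\partial=\id$ and uniqueness on the generating images of $j_R$ and $j_S$. Your observation that no Assumption \ref{ass2} and no hypothesis on the characteristic are needed also matches the paper, which states the proposition without these assumptions.
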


\subsection{Prolongations with respect to $\mathcal{B}$-operators}\label{secprol}
Our aim here is to construct left-adjoint functors to some functors related to our fixed coordinate $\ka$-algebra scheme $\mathcal{B}$ (regarded as an endofunctor on the category $\mathrm{Alg}_{\ka}$). We can not use the classical Weil restriction (as it was done in \cite{MS2}), since for some field extensions $\ka\subseteq K$, $\mathcal{B}(K)$ need not be a finite $K$-algebra and the criterion from \cite[Section 7.6, Theorem 4]{neron} is not applicable. Therefore, we need to construct these left-adjoint functors ``by hand''. Actually, the standard construction of the Weil restriction functor works in our more general context as well.

Let us fix a field extension $\ka\subseteq K$ and a $\mathcal{B}$-operator $\partial$ on $K$. For any affine scheme $V$ over $K$,
we want to define its prolongation $\tau^{\partial}(V)$. The defining property of $\tau^{\partial}(V)$ is that for any $K$-algebra $R$, we should have a natural bijection:
$$\tau^{\partial}(V)(R)\longleftrightarrow V\left(\mathcal{B}(R)\right),$$
where $\mathcal{B}(R)$ has the $K$-algebra structure given by the composition of ${\partial}$ with the map $\mathcal{B}(K)\to \mathcal{B}(R)$. Since we are interested in affine varieties only, we are in fact looking for a left-adjoint functor to a specialization of the functor $\mathcal{B}$.

Let us describe the aforementioned specialization of $\mathcal{B}$ in a more detailed way. For any $K$-algebra $R$, we set $\mathcal{B}^{\partial}(R):=\mathcal{B}(R)$ as rings. To define a $K$-algebra structure on $\mathcal{B}^{\partial}(R)$, we consider the composition of the following two maps:
\begin{equation*}
 \xymatrix{K\ar[rr]^-{\partial} &  & \mathcal{B}(K) \ar[rr]^-{\mathcal{B}(\rho)} &  & \mathcal{B}(R),}
\end{equation*}
where $\rho:K\to R$ is the structure map. It is easy to see that for any $K$-algebra map $f:R\to S$, the induced map $\mathcal{B}(f):\mathcal{B}(R)\to \mathcal{B}(S)$ is a $\mathcal{B}(K)$-algebra map. Hence, it is also a $K$-algebra map $\mathcal{B}^{\partial}(R)\to \mathcal{B}^{\partial}(S)$ and we get a functor:
$$\mathcal{B}^{\partial}:\Alg_K \ra \Alg_K.$$
\begin{theorem}\label{prolexist}
The functor $\mathcal{B}^{\partial}$ has a left-adjoint functor.
\end{theorem}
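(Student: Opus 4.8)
The plan is to construct the left-adjoint functor explicitly, following the standard Weil restriction recipe, and then verify the required universal property. To do this I would fix a $K$-algebra $R$ and describe the value $\tau^{\partial}(R)$ of the left-adjoint on $R$ as a $K$-algebra presented by generators and relations. Concretely, recall that by Proposition \ref{additive} we have $(\mathcal{B},+)=\ga^e$, so for any $K$-algebra $A$ an element of $\mathcal{B}(A)$ is a tuple $(a_1,\dots,a_e)\in A^e$, and the ring scheme multiplication on $\mathcal{B}$ is given by polynomials $F_1,\dots,F_e\in \ka[X_1,Y_1,\dots,X_e,Y_e]$ as in Remark \ref{mscontext}(2). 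The key observation is that a $\ka$-algebra map $R\to \mathcal{B}^{\partial}(A)$ is the same as a $\ka$-algebra map $R\to \mathcal{B}(A)$ which is $K$-linear with respect to the $K$-structure on $\mathcal{B}(A)$ coming from $\partial$ composed with $\mathcal{B}(\rho)$.

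First I would build, for the free $K$-algebra $R=K[x_i:i\in I]$, a candidate adjoint: introduce for each generator $x_i$ a block of $e$ new variables $x_i^{(1)},\dots,x_i^{(e)}$ (encoding the $e$ coordinates of the image of $x_i$ in $\mathcal{B}$), take the polynomial ring over $K$ on all these variables, and impose the relations forced by requiring the assignment $x_i\mapsto (x_i^{(1)},\dots,x_i^{(e)})$ to extend to a $\mathcal{B}$-operator of the structure map — these relations come from the multiplicativity conditions expressed through $F_1,\dots,F_e$ together with the constants encoding $\partial$ on $K$. The first coordinate functions are pinned down by the condition $\pi\circ(\cdot)$ being the structure map, so one sets $x_i^{(1)}=x_i$ after identifying $R$ inside the presentation. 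For a general $K$-algebra $R=K[x_i]/(g_j)$, I would additionally impose the relations obtained by applying the $\mathcal{B}$-operator formulas to each $g_j$, i.e. the $e$ coordinates of the image of $g_j$ must vanish. This yields a $K$-algebra $\tau^{\partial}(R)$, and functoriality in $R$ is routine.

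Next I would verify the adjunction bijection
\begin{equation*}
\Hom_{\Alg_K}\!\left(\tau^{\partial}(R),A\right)\;\cong\;\Hom_{\Alg_K}\!\left(R,\mathcal{B}^{\partial}(A)\right)
\end{equation*}
naturally in $R$ and $A$. The forward direction sends a $K$-algebra map $h:\tau^{\partial}(R)\to A$ to the map $R\to \mathcal{B}(A)$ whose coordinate functions are $h(x_i^{(1)}),\dots,h(x_i^{(e)})$; one checks that the relations imposed in $\tau^{\partial}(R)$ are exactly what is needed for this tuple to define a $\ka$-algebra map into $\mathcal{B}^{\partial}(A)$ and for it to be $K$-linear relative to the $\partial$-twisted structure. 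Conversely, a map $R\to \mathcal{B}^{\partial}(A)$ determines values for all the generator-coordinates, hence a map out of the presentation $\tau^{\partial}(R)$, and one checks it respects the relations. Naturality and mutual inverseness are then formal verifications.

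The main obstacle I expect is not the abstract adjunction bookkeeping but keeping the $K$-algebra structures straight: because the $K$-action on $\mathcal{B}^{\partial}(A)$ is twisted by $\partial$ (and hence, in positive characteristic under the classification of Theorem \ref{classop}, involves powers of Frobenius), the relations built into $\tau^{\partial}(R)$ must encode how scalars from $K$ act through $\partial$, not merely through the naive diagonal inclusion. Verifying that the explicit relations genuinely cut out the correct functor of points — equivalently, that the natural bijection $\tau^{\partial}(R)(A)\leftrightarrow R(\mathcal{B}(A))$ holds with $\mathcal{B}(A)$ carrying the $\partial$-twisted $K$-structure — is the delicate point, and it is precisely here that the $\mathcal{B}$-operator axioms ($\pi$-compatibility and multiplicativity via $F_1,\dots,F_e$) are used. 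Once this is pinned down for free algebras and shown to descend to quotients, the existence of the left adjoint follows.
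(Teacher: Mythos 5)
Your proposal is correct and follows essentially the same route as the paper: build $\tau^{\partial}$ on a presentation $R=K[X]/I$ by taking the polynomial ring on $e$ copies of the variables, canonically extending $\partial$ to a $\mathcal{B}$-operator $\bar{\partial}=(\partial_1,\ldots,\partial_e)$ from $K[X]$ to $K[\bar{X}]$ (with the twisted $K$-structure coming from $\partial$ on constants), imposing the relations $\partial_i(g_j)$, and checking the bijection $\Psi\leftrightarrow\bar{\Psi}$ on homomorphisms. The only cosmetic differences are that the paper imposes $\partial_i(f)$ for all $f\in I$ rather than just for the generators (equivalent, since every monomial of each multiplication polynomial $F_i$ contains a factor from the second argument), and that at the free-algebra level no relations are actually needed, as the extension of $x_i\mapsto(x_i^{(1)},\ldots,x_i^{(e)})$ exists automatically by freeness.
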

\begin{proof}
We will construct a functor
$$\tau^{\partial}:\Alg_K \ra \Alg_K$$
such that for any $K$-algebras $R,T$, there is a natural bijection:
$$\Hom_{\Alg_K}\left(R,\mathcal{B}^{\partial}(T)\right)\longleftrightarrow \Hom_{\Alg_K}\left(\tau^{\partial}(R),T\right).$$
For a (possibly infinite) tuple of variables $X$, let $\bar{X}:=(X_1,\ldots,X_e)$ denote a new tuple of variables, where each $X_i$ has the same length as $X$ and we make the identification $X=X_1$. Firstly, we will extend $\partial$ to a $\mathcal{B}$-operator $\bar{\partial}$ from $K[X]$ to $K[\bar{X}]$. Since we have a natural bijection:
$$\mathcal{B}\left(K\left[\bar{X}\right]\right)\longleftrightarrow K\left[\bar{X}\right]^{\times e},$$
we can consider $\bar{X}$ as an element of $\mathcal{B}\left(K[\bar{X}]\right)$ and we define $\bar{\partial}(X):=\bar{X}$. Together with $\partial$ on $K$, this determines a $K$-algebra homomorphism
$$\bar{\partial}:K[X]\ra \mathcal{B}\left(K\left[\bar{X}\right]\right),$$
which is a $\mathcal{B}$-operator from $K[X]$ to $K[\bar{X}]$, so it can be identified with a tuple $(\partial_1,\ldots,\partial_e)$, where each $\partial_i$ is a map from $K[X]$ to $K[\bar{X}]$ (see Remark \ref{mscontext}(2)).

We can assume that $R=K[X]/I$ for an appropriate $X$ as above and an ideal $I$ of $K[X]$. We define our adjoint functor by setting:
$$K[X]/I\mapsto K\left[\bar{X}\right]/\bar{I},\ \ \ \ \ \ \bar{I}:=\left(\partial_1(I)\cup \ldots\cup \partial_e(I)\right).$$
We will check that this construction gives the desired adjunction. That is, we need to find the following natural bijection:
$$\Hom_{\Alg_K}\left(K[X]/I,\mathcal{B}^{\partial}(T)\right)\longleftrightarrow \Hom_{\Alg_K}\left(K\left[\bar{X}\right]/\bar{I},T\right).$$
Since we have a $K$-scheme identification $\mathcal{B}^{\partial}=\Aa^e_K$, we get that the $K$-algebra maps $\Psi:K[X]\to \mathcal{B}^{\partial}(T)$ are in a natural bijection with the $K$-algebra maps $\bar{\Psi}:K[\bar{X}]\to T$, where $\bar{\Psi}(X_i)=\Psi_i(X)$ (after the identification $\Psi=(\Psi_1,\ldots,\Psi_e)$, where $\Psi_i:K[X]\to T$). It is easy to see (by checking the commutativity on $X$) that we have the following commutative diagram:
\begin{equation*}
 \xymatrix{  K[X]  \ar[d]_{\bar{\partial}} \ar[rr]^{\Psi}   &  & \mathcal{B}^{\partial}(T)  \\
\mathcal{B}^{\partial}(K\left[\bar{X}\right])   \ar[rru]_{\mathcal{B}^{\partial}\left(\bar{\Psi}\right)},  &  &  }
\end{equation*}
which implies that for all $f\in K[X]$, we have:
$$\Psi(f)=0\ \ \ \ \ \Longleftrightarrow \ \ \ \ \ \ \forall i\ \ \bar{\Psi}\left( \partial_i(f) \right)=0.$$
In particular, we get that:
$$I\subseteq \ker(\Psi)\ \ \ \ \ \Longleftrightarrow \ \ \ \ \ \ \bar{I}\subseteq \ker\left(\bar{\Psi}\right).$$
Hence, the bijection $\Psi\leftrightarrow \bar{\Psi}$ extends to our adjointness bijection.
\end{proof}
\begin{remark}
For the above construction, we do not need all the data from the definition of a coordinate $\ka$-algebra scheme. It is enough to assume (restricting to the category of $K$-algebras) that $\mathcal{B}$ is a ring scheme over $K$ whose underlying scheme is $\Aa^e_K$.
\end{remark}
For a $K$-algebra $R$, we describe now a natural homomorphism of $K$-algebras
$$\pi^{\partial}:R\ra \tau^{\partial}(R),$$
whose properties will be important in the sequel. The construction of this homomorphism uses the morphism $\pi$ from the coordinate $\ka$-algebra scheme data.
Consider the adjointness bijection:
$$\Hom_{\Alg_K}\left(\tau^{\partial}(R),\tau^{\partial}(R)\right)\ra \Hom_{\Alg_K}\left(R,\mathcal{B}^{\partial}(\tau^{\partial}(R))\right),\ \ \ \ \ \ \ \ \
f\mapsto f^{\sharp}.$$
Then, $\pi^{\partial}$ is defined as the composition of the following maps:
\begin{equation*}
 \xymatrix{R\ar[rr]^-{\id^{\sharp}} &  & \mathcal{B}^{\partial}(\tau^{\partial}(R)) \ar[rr]^-{\pi_{\tau^{\partial}(R)}} &  & \tau^{\partial}(R).}
\end{equation*}
\begin{definition}
Let $V=\spec(R)$ be an affine $K$-scheme.
\begin{enumerate}
\item We define the \emph{$\partial$-prolongation} of $V$ as
$$\tau^{\partial}V:=\spec\left(\tau^{\partial}R\right).$$

\item We have a natural morphism
$$\pi^{\partial}:\tau^{\partial}V\ra V,$$
which was described above on the level of $K$-algebras.

\item  For any $K$-algebra $T$ and any $a\in V(T)$, we denote by $\tau^{\partial}_aV$ the scheme over $T$, which is the fiber of the morphism $\pi^{\partial}:\tau^{\partial}V\to V$ over $a$.
\end{enumerate}
\end{definition}
We would like to point out below an interpretation of rational points of prolongations in terms of $\mathcal{B}$-operators.
\begin{remark}\label{nrem}
If $R$ is a $K$-algebra and $V$ is an affine $K$-scheme, then (by adjointness) the set of $R$-rational points $\tau^{\partial}V(R)$ is in a natural bijection with the set of $\mathcal{B}$-operators from $K[V]$ to $R$ extending $\partial$. More precisely, for any such rational point $a\in \tau^{\partial}V(R)$, we get the corresponding rational point $\pi^{\partial}(a)\in V(R)$ and then $a$ corresponds to a $\mathcal{B}$-operator (extending $\partial$) from $K[V]$ to $R$ of the $\ka$-algebra map $K[V]\to R$ corresponding to $\pi^{\partial}(a)$. Moreover, if $b\in V(R)$, then the fiber $\tau^{\partial}_bV(R)$ (considered as a subset of $\tau^{\partial}V(R)$) corresponds to $\mathcal{B}$-operators $\widetilde{\partial}:K[V]\to \mathcal{B}(R)$ such that $\pi_R\circ \widetilde{\partial}=b$, where $b$ is considered as $K$-algebra homomorphism $K[V]\to R$.
\end{remark}
In particular,  we get the following.
\begin{remark}\label{ver213}
There is a natural (in $V$) map (not a morphism!):
$$\partial_V:V(K)\ra \tau^{\partial}V(K)=V\left(\mathcal{B}^{\partial}(K)\right)$$
coming from composing the map $x:K[V]\to K$ with $\partial$ (equivalently, $\partial_V=V(\partial)$).
\end{remark}
\begin{lemma}\label{ver214}
Suppose that $V$ and $W$ are affine $K$-schemes and $W\subseteq \tau^{\partial}(V)$. Then, we get a natural $\mathcal{B}$-operator
$$\partial^W_V:K[V]\ra \mathcal{B}(K[W])$$
of the corresponding map $K[V]\to K[W]$ (see Remark \ref{nrem}) extending $\partial$.
\end{lemma}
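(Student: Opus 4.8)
The statement is essentially a diagram-chase using the adjunction from Theorem \ref{prolexist}. I want to produce a $\mathcal{B}$-operator $\partial^W_V : K[V] \to \mathcal{B}(K[W])$ extending $\partial$, given that $W \subseteq \tau^{\partial}(V)$ as affine $K$-schemes. The key observation is that the inclusion $W \subseteq \tau^{\partial}(V)$ corresponds, on coordinate rings, to a surjection (or at least a $K$-algebra map) $\tau^{\partial}(K[V]) \to K[W]$. Through the adjunction bijection
$$\Hom_{\Alg_K}\left(\tau^{\partial}(K[V]), K[W]\right) \longleftrightarrow \Hom_{\Alg_K}\left(K[V], \mathcal{B}^{\partial}(K[W])\right),$$
this map corresponds to a $K$-algebra homomorphism $K[V] \to \mathcal{B}^{\partial}(K[W]) = \mathcal{B}(K[W])$, and by the interpretation of rational points of prolongations given just before Remark \ref{ver213}, such a homomorphism is exactly a $\mathcal{B}$-operator from $K[V]$ to $K[W]$ extending $\partial$.

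\textbf{Steps.} First I would take the inclusion $\iota_W : W \hookrightarrow \tau^{\partial}(V)$ and pass to coordinate rings, obtaining a $K$-algebra homomorphism $\iota_W^* : \tau^{\partial}(K[V]) \to K[W]$. Second, I would apply the adjunction isomorphism from the proof of Theorem \ref{prolexist} with $R = K[V]$ and $T = K[W]$ to transport $\iota_W^*$ into an element
$$\partial^W_V \in \Hom_{\Alg_K}\left(K[V], \mathcal{B}^{\partial}(K[W])\right) = \Hom_{\Alg_K}\left(K[V], \mathcal{B}(K[W])\right).$$
Third, I must verify that $\partial^W_V$ is genuinely a $\mathcal{B}$-operator, i.e.\ that $\pi_{K[W]} \circ \partial^W_V$ is the structure map $K[V] \to K[W]$ coming from $K \to K[W]$ composed appropriately (the condition $\pi \circ \partial = f$ from Definition \ref{bopdef}(2)); this should follow from the definition of $\pi^{\partial}$ and the naturality of the adjunction, since the $K$-algebra structure on $\mathcal{B}^{\partial}(K[W])$ is built from $\partial$ precisely so that the adjoint of any map lands in the right fiber. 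Finally, that $\partial^W_V$ extends $\partial$ (meaning its restriction along the structure map $K \to K[V]$ agrees with $\partial : K \to \mathcal{B}(K)$ followed by $\mathcal{B}(K) \to \mathcal{B}(K[W])$) is exactly the statement that $\partial^W_V$ is a $K$-algebra map into $\mathcal{B}^{\partial}(K[W])$, whose $K$-algebra structure is defined via $\partial$.

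\textbf{Main obstacle.} The only genuinely nontrivial point is confirming that the adjoint map satisfies $\pi_{K[W]} \circ \partial^W_V = (\text{structure map})$, so that it qualifies as a $\mathcal{B}$-operator rather than merely a $\ka$-algebra map into $\mathcal{B}(K[W])$. I expect this to reduce to unwinding the construction of $\pi^{\partial}$ (defined via $(\id_{\tau^{\partial}(R)})^{\sharp}$ composed with $\pi_{\tau^{\partial}(R)}$) and using functoriality of the adjunction in the target variable: the map $\iota_W^*$ factors through $\pi^{\partial}$ in a way that forces the projection condition. Naturality in $V$ is then automatic from naturality of the adjunction bijection in its first argument. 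This step is bookkeeping about the unit/counit of the adjunction rather than a substantive difficulty.
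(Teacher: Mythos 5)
Your proposal is correct and is essentially the paper's own proof: the paper likewise reads the inclusion $W\subseteq \tau^{\partial}(V)$ as a $K[W]$-rational point of $\tau^{\partial}(V)$ and invokes the adjointness interpretation (stated just before Remark \ref{ver213}) identifying $\tau^{\partial}V(R)$ with $\mathcal{B}$-operators from $K[V]$ to $R$ extending $\partial$. Your extra verifications (that the adjoint lands in $\mathcal{B}^{\partial}(K[W])$, which forces the extension property, and the bookkeeping with $\pi^{\partial}$) are just an unfolding of that same correspondence, which the paper leaves implicit.
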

\begin{proof}
The inclusion $W\subseteq \tau^{\partial}(V)$ gives a rational point $\partial^W_V\in \tau^{\partial}(V)(K[W])$, which corresponds to the desired $\mathcal{B}$-operator extending $\partial$ as it was observed above.
\end{proof}
We will show now generalizations of two results from \cite{BHKK} regarding rational points of fibers of prolongations under Assumption \ref{ass2}. We will need only the second result. However, this second result may look a bit mysterious, hence we prefer to include the first one as well, whose intuitive meaning is much clearer.

By a \emph{$K$-variety}, we mean an affine $K$-scheme, which is of finite type and $K$-irreducible (not necessarily absolutely irreducible!).
\begin{cor}\label{ver216}
We work under Assumption \ref{ass2} and assume moreover that:
\begin{itemize}
\item $\ka\subseteq K\subseteq \Omega$ is a tower of fields;

\item $\partial$ is a $\mathcal{B}$-operator on $K$;

\item $W$ is a $K$-variety;

\item there is $c\in \tau^{\partial}W(\Omega)$ such that
$$b:=\pi^{\partial}(c)\in W(\Omega)$$
is a generic point of $W$ over $K$.
\end{itemize}
Then $\tau^{\partial}_bW(K(b))\neq \emptyset$.
\end{cor}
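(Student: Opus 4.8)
The plan is to transport the whole problem to the operator side using the dictionary between rational points of prolongations and $\mathcal{B}$-operators recorded just before Remark \ref{ver213}, and then to invoke the descent Proposition \ref{ver215}. First I would translate the hypotheses. The point $c\in\tau^{\partial}W(\Omega)$ corresponds to a $\mathcal{B}$-operator $\partial_c\colon K[W]\to\mathcal{B}(\Omega)$ extending $\partial$, and the condition $b=\pi^{\partial}(c)$ means precisely that $\pi_{\Omega}\circ\partial_c=b$ as a $K$-algebra map $K[W]\to\Omega$. Since $W$ is a $K$-variety, $K[W]$ is a domain, and since $b$ is a generic point of $W$ over $K$, the map $b$ is an \emph{embedding} of $K[W]$ into $\Omega$, whose image generates the subfield $K(b)=:L$ of $\Omega$.

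Next I would move to fraction fields. Applying Lemma \ref{ver27}(1) to $\partial_c$ (with $R=K[W]$, $S=\Omega$, and $\partial_0=b$ the embedding), I obtain a $\mathcal{B}$-operator $\partial_c^{\mathrm{Frac}}\colon K(W)\to\mathcal{B}(\Omega)$ extending $\partial_c$, whose $\pi$-part is the induced field embedding $K(W)\to\Omega$ with image $L$. Identifying $K(W)$ with $L$ through this embedding, $\partial_c^{\mathrm{Frac}}$ becomes a $\mathcal{B}$-operator of the inclusion $L\subseteq\Omega$ extending $\partial$ on $K$. At this point I am exactly in the situation of Proposition \ref{ver215} for the tower $\ka\subseteq K\subseteq L\subseteq\Omega$: the $\mathcal{B}$-operator $\partial$ (viewed from $K$ to $L$) extends to the $\mathcal{B}$-operator $\partial_c^{\mathrm{Frac}}$ of the inclusion $L\subseteq\Omega$.

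Proposition \ref{ver215} then yields a $\mathcal{B}$-operator $\partial_L$ \emph{on} $L=K(b)$ extending $\partial$. Restricting $\partial_L$ along the embedding $b\colon K[W]\hookrightarrow L$ gives a $\mathcal{B}$-operator
$$\widetilde{\partial}:=\partial_L\circ b\colon K[W]\to\mathcal{B}(K(b))$$
extending $\partial$, whose $\pi$-part is $\pi_{K(b)}\circ\widetilde{\partial}=\pi_L\circ\partial_L\circ b=b$. By the fibre version of the dictionary (the operator $\widetilde{\partial}$ satisfies $\pi_{K(b)}\circ\widetilde{\partial}=b$), $\widetilde{\partial}$ is precisely a $K(b)$-rational point of $\tau^{\partial}_bW$, so $\tau^{\partial}_bW(K(b))\neq\emptyset$.

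I expect the one substantive step to be the appeal to Proposition \ref{ver215}, i.e. the descent of the operator from the large field $\Omega$ down to $K(b)$; this is exactly where Assumption \ref{ass2} is indispensable, entering through $\ker(\pi)^e=0$ and the purely inseparable argument of Lemmas \ref{ver28} and \ref{ver29}. Everything else is formal once the dictionary is in place; the only points needing a little care are verifying that $b$ is a genuine embedding (so that Lemma \ref{ver27}(1) applies) and that the $\pi$-part of the restricted operator is exactly $b$, which is what pins the constructed point to the fibre over $b$ rather than to some other fibre of $\pi^{\partial}$.
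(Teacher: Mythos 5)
Your proof is correct and follows essentially the same route as the paper: translate $c$ into a $\mathcal{B}$-operator on $K[W]$ expanding $b$, pass to the fraction field $L=K(W)$ via Lemma \ref{ver27}(1), and then descend with Proposition \ref{ver215} (for $M=K$, $N=L=K(b)$) to obtain an operator on $K(b)$ whose restriction along $b$ gives the desired point of $\tau^{\partial}_bW(K(b))$. Your write-up is in fact slightly more explicit than the paper's at the final step, where you verify that the $\pi$-part of the restricted operator equals $b$, pinning the point to the correct fibre.
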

\begin{proof}
We consider $b\in W(\Omega)$ as a $K$-algebra homomorphism $b:K[W]\to \Omega$ and $c\in \tau^{\partial}W(\Omega)$ as a $B$-operator from $K[W]$ to $B$ of $b$, so we get the following commutative diagram resembling the diagram from the assumptions of Proposition \ref{ver215}:
\begin{equation*}
 \xymatrix{  K  \ar[d]_{} \ar[rr]^{\partial}  &  &  \mathcal{B}(K)  \ar[d]^{} \\
K[W]  \ar[rrd]_{b} \ar[rr]^{c}  &  &  \mathcal{B}(\Omega)  \ar[d]^{\pi_{\Omega}}\\
 &  &  \Omega.}
\end{equation*}
Let $L:=K(W)$ be the field of fractions of $K[W]$. By Lemma \ref{ver27}(1), the $\mathcal{B}$-operator $c$ extends to $L$.
Since $b$ is a generic point of $W$ over $K$, the corresponding map $b:K[W]\to \Omega$ is one-to-one and it extends to $L$. If we regard this map $L\to \Omega$ as an inclusion and consider $\partial$ as a $\mathcal{B}$-operator from $K$ to $L$, then we are in the situation from Proposition \ref{ver215} (for $M=K$ and $N=L=K(b)$) and we get the corresponding $\mathcal{B}$-operator on $K(b)$ extending $\partial$, which is an element of $\tau^{\partial}_bW(K(b))$.
\end{proof}
Following \cite{BHKK}, for any pair of affine $K$-schemes $(V,W)$ such that $W\subseteq \tau^{\partial}(V)$, we define the $K$-scheme $E$ as the equalizer of the following two morphisms:
$$\tau^{\partial}\left(\pi_V^{\partial}\circ i\right),i \circ \pi_W^{\partial}:\tau^{\partial}(W)\ra \tau^{\partial}(V),$$
where $i$ is the inclusion morphism $W\subseteq \tau^{\partial}(V)$. We also denote by $\pi_E$ the following composition morphism:
\begin{equation*}
 \xymatrix{E\ar[r]^{} &  \tau^{\partial}(W) \ar[r]^{\ \ \pi_{W}^{\partial}} &   W.}
\end{equation*}
The above definition of the $K$-scheme $E$ may look strange and not very well motivated, but it is crucial for the geometric axioms given in the next section. We describe below the scheme $E$ in terms of its rational points (and the corresponding $\mathcal{B}$-operators) and this description should clarify the purpose of introducing $E$.
\begin{remark}\label{eclar}
Let us take a rational point $\widetilde{\partial}\in \tau^{\partial}(W)(\Omega)$ ($K\subseteq \Omega$ is a field extension). As explained above, $\widetilde{\partial}$ corresponds to the following $\mathcal{B}$-operator:
$$\widetilde{\partial}:K[W]\ra \mathcal{B}(\Omega).$$
This $\mathcal{B}$-operator yields \emph{two} $\mathcal{B}$-operators from $K[V]$ to $\Omega$ in the following way.
\begin{itemize}
\item The composition morphism $W\to \tau^{\partial}(V)\to V$ gives a $K$-algebra map $K[V]\to K[W]$ and we define
$$\widetilde{\partial}_V:K[V]\ra \mathcal{B}^{\partial}(\Omega)$$
as $\widetilde{\partial}$ composed with the above map.

\item Let us set as usual:
$$\widetilde{\partial}_0:K[W]\ra \Omega,\ \ \ \ \ \ \ \ \ \ \widetilde{\partial}_0:=\pi\circ \widetilde{\partial}.$$
Then, we define
$$\widetilde{\partial}^V:K[V]\ra \mathcal{B}^{\partial}(\Omega)$$
as the following composition:
\begin{equation*}
 \xymatrix{K[V]\ar[rr]^-{\partial^W_V} &  & \mathcal{B}^{\partial}(K[W]) \ar[rr]^-{\mathcal{B}^{\partial}(\widetilde{\partial}_0)} &  & \mathcal{B}^{\partial}(\Omega),}
\end{equation*}
where the $\mathcal{B}$-operator $\partial^W_V$ was described in Lemma \ref{ver214}.
\end{itemize}
Then, we have the following description:
$$E(\Omega)=\left\{\widetilde{\partial}\in \tau^{\partial}(W)(\Omega)\ |\ \widetilde{\partial}^V=\widetilde{\partial}_V\right\}.$$
\end{remark}

\begin{cor}\label{ver219}
We work under Assumption \ref{ass2} and assume moreover that:
\begin{itemize}
\item $\ka\subseteq K\subseteq \Omega$ is a tower of fields;

\item $\partial$ is a $\mathcal{B}$-operator on $K$;

\item $V$ and $W$ are $K$-varieties;

\item $W\subseteq \tau^{\partial}(V)$;

\item the composition morphism $W\to V$ is dominant;

\item there is $c\in E(\Omega)$ such that
$$b:=\pi_E(c)\in W(\Omega)$$
is a generic point in $W$ over $K$.
\end{itemize}
Then $\pi_E^{-1}(b)(K(b))\neq \emptyset$.
\end{cor}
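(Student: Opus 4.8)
The plan is to recast the desired $K(b)$-point of the fiber $\pi_E^{-1}(b)$ as the extension of a single $\mathcal{B}$-operator across the field extension $K(V)\subseteq K(W)$, and then to produce that extension by a direct application of Proposition \ref{ver215}, exactly as in the proof of Corollary \ref{ver216} but over a larger base field. First I would unwind, via Remark \ref{eclar}, what it means for a point $d\in\tau^{\partial}(W)(K(b))$ lying over the tautological generic point $b_L\colon K[W]\hookrightarrow K(W)=K(b)$ to belong to $E$. Writing $g\colon K[V]\to K[W]$ for the comorphism of the dominant composition $W\to V$ and recalling from Lemma \ref{ver214} that $\pi\circ\partial^W_V=g$, the two operators attached to $d$ are $d_V=d\circ g$ and $d^V=\mathcal{B}(b_L)\circ\partial^W_V$. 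A short naturality check shows they have the same $\pi$-part $b_L\circ g$, so the membership condition $d^V=d_V$ says precisely that the restriction of $d$ to the subring $g(K[V])\cong K[V]$ coincides with the canonical operator $\partial^W_V$ pushed into $\mathcal{B}(K(b))$.

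With this reformulation, the natural move is to apply Proposition \ref{ver215} to the tower $K(V)\subseteq K(W)=K(b)\subseteq\Omega$. Since $W\to V$ is dominant, $g$ is injective and $K(V)\subseteq K(W)$ is a genuine field extension. The operator $\partial^W_V\colon K[V]\to\mathcal{B}(K[W])$, composed with the inclusion into $\mathcal{B}(K(b))$, has injective $\pi$-part $b_L\circ g$, so by Lemma \ref{ver27}(1) it extends uniquely to a $\mathcal{B}$-operator $\partial'$ on $K(V)$ valued in $K(b)$; this is the operator to be extended. The $\Omega$-valued extension required by Proposition \ref{ver215} is supplied by $c$ itself: extending $c$ to $\widetilde{c}\colon K(W)\to\mathcal{B}(\Omega)$ via Lemma \ref{ver27}(1) and writing $j\colon K(b)\hookrightarrow\Omega$, the hypothesis $c\in E(\Omega)$, i.e. $c^V=c_V$, together with the factorization $\mathcal{B}(b)=\mathcal{B}(j)\circ\mathcal{B}(b_L)$, shows that $\widetilde{c}$ restricts on $K(V)$ to $\mathcal{B}(j)\circ\partial'$. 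Thus $\widetilde{c}$ is a $\mathcal{B}$-operator of the inclusion $K(b)\subseteq\Omega$ extending $\partial'$, and Proposition \ref{ver215} yields a $\mathcal{B}$-operator $\partial'_N$ on $K(b)$ extending $\partial'$. Its restriction $d:=\partial'_N|_{K[W]}$ is the sought point: it extends $\partial$ (inherited from $\partial^W_V$ on the subfield $K(V)\supseteq K$), it lies over $b_L$ since $\pi\circ\partial'_N=\id$, and it lies in $E$ because $\partial'_N$ extends $\partial'$, whence $d\circ g$ equals $\partial^W_V$ included into $\mathcal{B}(K(b))$, i.e. $d_V=d^V$. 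As $b$ is generic with $K(b)=K(W)$, the point $b_L$ is the $K(b)$-rational avatar of $b$, so $d\in\pi_E^{-1}(b)(K(b))$.

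The one delicate point, and the reason a naive comparison of $d$ with $c$ over $\Omega$ does not work, is that the extension produced by Proposition \ref{ver215} need not be compatible with $\widetilde{c}$ on purely inseparable elements: the values forced by Lemma \ref{ver29} can differ from those of $\widetilde{c}$ by a nonzero element of $\ker(\pi)$ killed by Frobenius, so one cannot simply identify $\mathcal{B}(j)\circ d$ with $c$ on all of $K[W]$. Choosing $M=K(V)$ rather than $M=K$ is exactly what sidesteps this obstacle, because then the $E$-condition is not extracted from a comparison with $c$ over $\Omega$ but is built into the conclusion \emph{``$\partial'_N$ extends $\partial'$''} of Proposition \ref{ver215}; the only inseparable extension still at issue is that of $\partial'$ from $K(V)$ to $K(W)$, and its values on $K[V]\subseteq K[W]$ are pinned down by $\partial'$ itself.
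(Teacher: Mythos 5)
Your proof is correct and follows essentially the same route as the paper's: unwinding $c\in E(\Omega)$ via Remark \ref{eclar} and then applying Proposition \ref{ver215} with $M=K(V)$ and $N=K(W)$, exactly as in the proof of Corollary \ref{ver216}. Your write-up simply makes explicit the details (the extensions via Lemma \ref{ver27}(1) and the verification of the $E$-condition) that the paper leaves to the reader.
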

\begin{proof}
Let us denote $\widetilde{\partial}:=c$ to fit into the notation from Remark \ref{eclar}. By Remark  \ref{eclar}, there is the following commutative diagram:
\begin{equation*}
 \xymatrix{  K[V]  \ar[d]_{} \ar[rr]^{\partial^W_V}   &  &  \mathcal{B}(K[W])  \ar[d]^{} \\
K[W]  \ar[rrd]_{b} \ar[rr]^{\widetilde{\partial}}  &  &  \mathcal{B}(\Omega)  \ar[d]^{\pi_{\Omega}}\\
 &  &  \Omega.}
\end{equation*}
We are now in a similar situation as in the proof of Corollary \ref{ver216} and we can finish in a similar way, by using
 Proposition \ref{ver215} for $M=K(V)$ and $N=K(W)$.
\end{proof}

\section{Model theory of $\mathcal{B}$-operators}\label{secmt}
Let $(\mathcal{B},\iota, \pi)$ be the fixed coordinate $\ka$-algebra scheme from the beginning of Section \ref{secoper}. In this section, we prove some results about the model theory of $\mathcal{B}$-fields. The proof of the main existence result (Theorem \ref{mainthm}) goes in an analogous way (after having the preparatory results of  Section \ref{secoper}) to the arguments from \cite{BHKK} and we will be brief. More care is needed for the statement and the proof of the result comparing model companions of the theories $\mathcal{B}$-F and $B_{\otimes}$-F (Theorem \ref{supmain}). We discuss some negative results and the related questions in Section \ref{secneg}. In Section \ref{secstable}, we show that the new theories we obtain are stable and we describe a language in which these theories have quantifier elimination.

\subsection{First order set-up for $\mathcal{B}$-operators}
In this subsection, we  briefly discuss a first order language, which is appropriate for a model-theoretic analysis of $\mathcal{B}$-fields.

Let $L_{\ka}$ be the language of $\ka$-algebras, that is: there are two binary function symbols and a unary function symbol for each element of $\ka$. The language $L_{\mathcal{B}}$ is the language $L_{\ka}$ expanded by $e-1$ unary function symbols (note that $(\mathcal{B},+)=\ga^e$). By Remark \ref{mscontext}(2), each $\mathcal{B}$-operator on a $\ka$-algebra $R$ naturally gives $R$ an $L_{\mathcal{B}}$-structure. Remark \ref{mscontext}(2) also implies that there is an $L_{\mathcal{B}}$-theory $\mathcal{B}$-F, whose class of models coincides with the class of $\mathcal{B}$-fields.
\begin{remark}
The notation ``$B$-DF'' was used in \cite{BHKK}, where the ``D'' stood for ``Differential''. This was perhaps not a very good choice, and, following the referee's suggestion, we decided to remove this ``D'' from the notation of this paper. To prevent possible confusions, when mentioning the theories which were denoted by ``$B$-DF'' in \cite{BHKK}, we use the notation ``$B_{\otimes}$-F'' (rather than a simpler one ``B-F'').
\end{remark}

\begin{remark}\label{biint}
We comment here on some interpretability issues, which are related to the appropriate ring schemes isomorphisms.

 Let $(\mathcal{B},\iota,\pi)$ and $(\mathcal{B}',\iota',\pi')$ be coordinate $\ka$-algebra schemes and suppose that $\Psi:\mathcal{B}\to \mathcal{B}'$ is a ring scheme isomorphism ``over $\pi$'', that is such that $\pi=\pi'\circ \Psi$. Then, for any $\ka$-algebra $R$, $\Psi_R$ yields a bijections (given by polynomials over $\ka$) between the set of $\mathcal{B}$-operators on $R$ and the set of $\mathcal{B}'$-operators on $R$. Therefore, the theories $\mathcal{B}$-F and $\mathcal{B}'$-F are bi-interpretable.

For example, if we have
  $$\mathcal{B}=\left(\mathbb{S}_{\ka}\right)^{\times e},$$
  then we get by Example \ref{trexam}(2) (it generalizes from $e=2$ to any positive integer $e$) that the theory $\mathcal{B}$-F (coinciding with the theory of $\ka$-algebras with $e-1$ endomorphisms) is bi-interpretable with the theory $\mathcal{B}^{\fr^n}$-F for all $n>0$.
\end{remark}

\subsection{Positive results}\label{secpos}
We fix a tower of fields $\ka\subseteq K\subset \Omega$ and a $\mathcal{B}$-operator $\partial$ on $K$. We assume that $\Omega$ is a big algebraically closed field. From now on, if we say ``$\mathcal{B}$-operator from $R$ to $S$'' it means that $S$ is an extension of $R$ and we are considering a $\mathcal{B}$-operator of the inclusion $R\subseteq S$.
We also fix the following data:
\begin{itemize}
\item $n>0$, $a\in \Omega^{\times n}$ and $a'=(a,\bar{a})\in \Omega^{\times ne}$;

\item $V=\locus_K(a)$;

\item $W=\locus_K(a')$.
\end{itemize}
We state below a general fact, which immediately follows from Lemma \ref{ver214}.
\begin{lemma}\label{easylemma2}
The following are equivalent.
\begin{enumerate}
\item There is a $\mathcal{B}$-operator $\partial':K[a]\to \mathcal{B}(K[a'])$  such that $\partial'(a)=a'$ and $\partial'$ extends $\partial$.

\item $W\subseteq \tau^{\partial}(V)$.
\end{enumerate}
\end{lemma}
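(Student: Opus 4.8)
The plan is to unwind the adjunction that defines $\tau^{\partial}$ and observe that both conditions are literally describing the same $\mathcal{B}$-operator, just packaged differently. The key point is the interpretation of rational points of prolongations that was recorded right after the construction of $\tau^{\partial}$: for a $K$-algebra $R$, the set $\tau^{\partial}V(R)$ is in natural bijection with the set of $\mathcal{B}$-operators $K[V]\to \mathcal{B}(R)$ extending $\partial$, and under this bijection the fiber over a point $b\in V(R)$ corresponds to those operators whose projection $\pi_R\circ\widetilde{\partial}$ equals $b$. So the whole proof is really a matter of choosing the right $R$ and the right point.

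For the implication $(2)\Rightarrow(1)$, I would apply Lemma \ref{ver214} directly with this $V$ and with $W$ in the role of the subscheme sitting inside $\tau^{\partial}(V)$. That lemma produces a natural $\mathcal{B}$-operator $\partial^W_V:K[V]\to \mathcal{B}(K[W])$ extending $\partial$; the content is then just to check that this operator sends the generic point $a$ of $V$ to $a'$. Concretely, the inclusion $W\subseteq\tau^{\partial}(V)$ is by definition a point of $\tau^{\partial}(V)(K[W])$, and $a'\in W\subseteq\Omega^{\times ne}$ is exactly the data recording where the coordinates of $a$ go under the associated operator, since $W=\locus_K(a')$ and $a'=(a,\bar a)$ carries $a$ in its first block. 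So $\partial^W_V(a)=a'$ falls out of tracing through the adjunction on the distinguished generators.

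For the converse $(1)\Rightarrow(2)$, I would run the same correspondence backwards. Given a $\mathcal{B}$-operator $\partial':K[a]\to\mathcal{B}(K[a'])$ extending $\partial$ with $\partial'(a)=a'$, the adjointness bijection from Theorem \ref{prolexist} turns $\partial'$ into a $K$-algebra homomorphism $\tau^{\partial}(K[V])\to K[W]$, equivalently a morphism $\spec(K[W])=W\to\tau^{\partial}(V)$. One then checks that this morphism is a closed immersion realizing $W$ as a subscheme of $\tau^{\partial}(V)$: because $a'$ generates $K[a']=K[W]$ over $K$ and is the image of the generic point, the comorphism is surjective, so the associated map of schemes is the inclusion of a closed $K$-subvariety. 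Here the identification $W=\locus_K(a')$ is what guarantees that the image is precisely $W$ rather than some larger subscheme.

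I do not expect a serious obstacle here, since the statement is essentially a reformulation of the defining adjoint property of $\tau^{\partial}$ together with the rational-point dictionary already established in the excerpt; the authors themselves signal this by saying it ``follows immediately from Lemma \ref{ver214}.'' The only point demanding a little care is bookkeeping the generators: one must make sure that the tuple $a'=(a,\bar a)$, the coordinates $\bar X=(X_1,\dots,X_e)$ used to build $\tau^{\partial}$, and the requirement $\partial'(a)=a'$ all line up so that ``the operator sending $a$ to $a'$'' and ``the inclusion $W\subseteq\tau^{\partial}(V)$'' name the same object. Once the indexing conventions from Remark \ref{mscontext}(2) and the construction in Theorem \ref{prolexist} are matched, both directions are immediate.
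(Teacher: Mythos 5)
Your proof is correct and follows exactly the route the paper intends: the paper omits the argument, remarking only that the lemma ``follows immediately from Lemma \ref{ver214}'', and your write-up supplies precisely that — direction $(2)\Rightarrow(1)$ via Lemma \ref{ver214} and the rational-point dictionary, and direction $(1)\Rightarrow(2)$ by running the adjunction from Theorem \ref{prolexist} backwards, with the kernel of the induced surjection $K[\bar X]\to K[a']$ identified as $I(a')$ so that the closed subscheme is exactly $W=\locus_K(a')$.
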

We assume now the existence of $\partial'$ as in Lemma \ref{easylemma2}. Therefore, $W\subseteq \tau^{\partial}(V)$ and there is a corresponding $K$-scheme $E$ (see Remark \ref{eclar} and the definition of $E$ before it). The next result is an important criterion about extending $\mathcal{B}$-operators. It is crucial for our main result about the existence of a model companion of the theory of $\mathcal{B}$-fields under Assumption \ref{ass2}. It follows formally from the previous results exactly as in the proof of \cite[Proposition 3.6]{BHKK}, so we skip its proof.
\begin{prop}\label{kerprol}
If $\mathcal{B}$ satisfies Assumption \ref{ass2}, then the following are equivalent.
\begin{enumerate}
\item The morphism $\pi_E:E\to W$ is dominant.

\item The $\mathcal{B}$-operator $\partial':K[a]\to \mathcal{B}(K[a'])$ can be extended to a $\mathcal{B}$-operator on the field $K(a')$.

\item The $\mathcal{B}$-operator $\partial':K[a]\to \mathcal{B}(K[a'])$ can be extended to a $\mathcal{B}$-operator on a field extension of $K(a')$.

\item The $\mathcal{B}$-operator $\partial':K[a]\to \mathcal{B}(K[a'])$ can be extended to a $\mathcal{B}$-operator from $K[a']$ to $\Omega$.
\end{enumerate}
\end{prop}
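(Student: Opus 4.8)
The plan is to prove the equivalence of the four conditions via the cycle $(1)\Rightarrow (2)\Rightarrow (3)\Rightarrow (4)\Rightarrow (1)$, relying on the rational-point interpretation of the scheme $E$ from Remark \ref{eclar}, on Corollary \ref{ver219}, and on the extension lemmas of Section \ref{secoper}. The implications $(2)\Rightarrow (3)$ and $(4)\Rightarrow (3)$ (or the routing $(3)\Rightarrow (4)$) are essentially formal: extending an operator to $K(a')$ obviously yields an extension to \emph{some} field extension, and by Lemma \ref{ver27}(1) a $\mathcal{B}$-operator on a domain extends to its fraction field, so the various ``field extension'' formulations are interchangeable. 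Thus the real content lies in the passage between the geometric statement (1) and the algebraic extendability statements.

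For $(1)\Rightarrow (2)$, I would argue as follows. Dominance of $\pi_E:E\to W$ over the field $K$ means that the generic point $b$ of $W$ lifts to a point $c\in E(\Omega)$ with $\pi_E(c)=b$ generic in $W$. This is exactly the hypothesis of Corollary \ref{ver219}: we have $W\subseteq \tau^{\partial}(V)$, the composition $W\to V$ is dominant (since $b$ is generic in $W$ and maps to the generic point $a$ of $V$, as $W=\locus_K(a')$ sits over $V=\locus_K(a)$), and we have produced the required point of $E(\Omega)$ over a generic $b$. Corollary \ref{ver219} then gives a rational point of $\pi_E^{-1}(b)$ over $K(b)$, which by the description in Remark \ref{eclar} is precisely a $\mathcal{B}$-operator on $K(b)=K(a')$ compatible with $\partial^W_V$, and unwinding the equalizer condition $\widetilde{\partial}^V=\widetilde{\partial}_V$ shows this operator restricts to $\partial'$ on $K[a]$ and extends $\partial$. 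This yields the $\mathcal{B}$-operator on $K(a')$ required in (2).

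For the remaining implication $(3)\Rightarrow (1)$ (closing the loop after $(2)\Rightarrow (3)\Rightarrow (4)$, with $(4)\Rightarrow (3)$ again via fraction fields), I would reverse the dictionary. Suppose $\partial'$ extends to a $\mathcal{B}$-operator on some field $\Omega\supseteq K(a')$; taking $b\in W(\Omega)$ to be the image of the generic point and reading the extended operator as a rational point $\widetilde{\partial}\in\tau^{\partial}(W)(\Omega)$, the compatibility of the extension with $\partial'=\partial^W_V$ forces the equalizer identity $\widetilde{\partial}^V=\widetilde{\partial}_V$, so $\widetilde{\partial}\in E(\Omega)$ with $\pi_E(\widetilde{\partial})=b$ generic in $W$. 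The existence of such an $\Omega$-point over the generic point of $W$ is exactly dominance of $\pi_E$, giving (1).

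The main obstacle I anticipate is bookkeeping the two distinct $\mathcal{B}$-operators $\widetilde{\partial}_V$ and $\widetilde{\partial}^V$ attached to a point of $\tau^\partial(W)$ and verifying that membership in the equalizer $E$ corresponds on the nose to ``the operator on $K[V]$ obtained by restriction agrees with $\partial^W_V$ pushed forward'' — i.e., that the scheme-theoretic equalizer condition translates faithfully into the algebraic compatibility needed to invoke Proposition \ref{ver215}. Since this is precisely the translation already carried out in Remark \ref{eclar} and Corollary \ref{ver219}, and since the whole argument mirrors \cite[Proposition 3.6]{BHKK} after substituting the functor $\mathcal{B}$ for $B_{\otimes}$ and invoking Assumption \ref{ass2} only through Corollary \ref{ver219}, the proof is formal and I would simply cite that this matching holds, rather than recompute it.
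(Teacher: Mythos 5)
Your architecture for the equivalence of (1), (2), (3) is sound and matches what the paper intends (the paper itself skips the proof, citing that it follows formally from the preceding results as in \cite[Proposition 3.6]{BHKK}): (1)$\Rightarrow$(2) is exactly an application of Corollary \ref{ver219} (dominance of $\pi_E$ lifts the generic point $b$ of $W$ to a point of $E(\Omega)$; the corollary then gives a $K(b)$-point of the fiber, which via Remark \ref{eclar} is a $\mathcal{B}$-operator from $K[a']$ to $K(a')$ extending $\partial'$, and Lemma \ref{ver27}(1) extends it to all of $K(a')$); (3)$\Rightarrow$(1) is the correct dictionary reversal; (2)$\Rightarrow$(3) is trivial. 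One small gloss in (1)$\Rightarrow$(2): the fiber point is a priori only defined on $K[a']$ with values in $\mathcal{B}(K(a'))$, so the final appeal to Lemma \ref{ver27}(1) is needed and should be said explicitly.

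The genuine gap is your treatment of condition (4). You claim $(4)\Rightarrow(3)$ is ``essentially formal\ldots via fraction fields'', but Lemma \ref{ver27}(1) cannot deliver this: applied to a $\mathcal{B}$-operator $d\colon K[a']\to\mathcal{B}(\Omega)$ of the inclusion, it only produces a $\mathcal{B}$-operator \emph{from} $K(a')$ \emph{to} $\Omega$, i.e.\ a map $K(a')\to\mathcal{B}(\Omega)$ over the inclusion $K(a')\subseteq\Omega$. Condition (3) asks for an object of a different type: a map $L\to\mathcal{B}(L)$ defined on \emph{all} of a field $L\supseteq K(a')$ and split by $\pi_L$. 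Converting the former into the latter is precisely the content of Proposition \ref{ver215} (applied with $M=K(a)$, $N=K(a')$), and this is exactly where Assumption \ref{ass2} is consumed; it is not formal. Indeed, the example at the end of Section \ref{secneg} exhibits, for a $\mathcal{B}$ violating Assumption \ref{ass2}, an operator $K[x^{1/2}]\to\mathcal{B}(\Omega)$ of the inclusion that extends to no $\mathcal{B}$-operator on $K(x^{1/2})$ --- so the implication you call formal is false without Assumption \ref{ass2} and cannot be obtained by bookkeeping alone. The repair is cheap, and two options exist: either invoke Proposition \ref{ver215} to get $(4)\Rightarrow(2)$, or observe that your own dictionary argument for $(3)\Rightarrow(1)$ applies verbatim to (4), since it only ever uses the restriction of the operator to $K[W]$: that restriction is a point of $E(\Omega)$ lying over the generic point of $W$ (the equalizer identity $\widetilde{\partial}^V=\widetilde{\partial}_V$ is verified by the same computation), giving $(4)\Rightarrow(1)$ directly; together with the trivial $(2)\Rightarrow(4)$ (restrict to $K[a']$ and compose with $\mathcal{B}(K(a'))\to\mathcal{B}(\Omega)$), this closes all the equivalences.
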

Our axioms for a model companion of the theory of $\mathcal{B}$-fields have exactly the same form as the corresponding axioms from \cite{BHKK}.
\smallskip
\\
\textbf{Axioms for $\mathcal{B}-\cf$}
\\
The structure $(K,\partial)$ is a $\mathcal{B}$-field such that for each pair $(V,W)$ of $K$-varieties, IF
\begin{itemize}
\item $W\subseteq \tau^{\partial}(V)$,

\item $W$ projects generically on $V$,

\item $E$ projects generically on $W$;
\end{itemize}
THEN there is $x\in V(K)$ such that $\partial_V(x)\in W(K)$.
\smallskip
\\
It is standard now to show the following result giving the existence of the model companion of the theory of $\mathcal{B}$-fields under Assumption \ref{ass2} (see e.g. the proofs of \cite[Theorem 3.8]{BHKK}, \cite[Theorem 2.1]{K2}, and \cite[Theorem 2.17]{BK}).
\begin{theorem}\label{mainthm}
Suppose that $(K,\partial)$ is a $\mathcal{B}$-field and $\mathcal{B}$ satisfies Assumption \ref{ass2}. Then, the following are equivalent.
\begin{enumerate}
\item $(K,\partial)$ is existentially closed.

\item $(K,\partial)\models \mathcal{B}-\cf$.
\end{enumerate}
\end{theorem}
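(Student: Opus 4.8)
The plan is to establish the equivalence between existential closedness and satisfying the geometric axioms $\mathcal{B}-\dcf$, following the standard pattern used for such model companion results (as in \cite{BHKK}, \cite{K2}, \cite{BK}). The core idea is that the geometric axioms are a first-order scheme expressing precisely the condition that any ``finitely generated'' consistent configuration of the operator can be realized in the model itself. The main technical input has already been assembled in Section \ref{secprol}, especially Proposition \ref{kerprol}, which translates the dominance of $\pi_E:E\to W$ into the extendability of a $\mathcal{B}$-operator; this is exactly what lets the geometric hypotheses on $(V,W)$ capture solvability of systems of $\mathcal{B}$-operator equations.

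For the direction $(1)\Rightarrow(2)$, I would take $(K,\partial)$ existentially closed and verify it satisfies each axiom. So suppose $V,W$ are $K$-varieties with $W\subseteq \tau^{\partial}(V)$, with $W$ projecting generically onto $V$ and $E$ projecting generically onto $W$. First I would pass to a large extension: choosing a generic point $b$ of $W$ over $K$ together with a lift $c\in E(\Omega)$, Corollary \ref{ver219} (whose hypotheses are met by the genericity and the dominance assumptions) produces a $\mathcal{B}$-operator on $K(b)$ extending $\partial$. This yields a $\mathcal{B}$-field extension $(K(b),\partial')$ of $(K,\partial)$ in which the desired point $x$ (namely $a$, the image of $b$ under the projection to $V$) exists with $\partial_V(x)\in W$. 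Since the statement ``there is $x\in V(K)$ with $\partial_V(x)\in W(K)$'' is expressible by an existential $L_{\mathcal{B}}$-formula with parameters from $K$ (using that $\tau^{\partial}V$ and $E$ are defined by polynomial data over $K$), existential closedness transfers the solution back down to $K$ itself, verifying the axiom.

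For the converse $(2)\Rightarrow(1)$, I would assume $(K,\partial)\models \mathcal{B}-\dcf$ and show $(K,\partial)$ is existentially closed. Let $(K,\partial)\subseteq (L,\partial_L)$ be a $\mathcal{B}$-field extension and suppose some quantifier-free $L_{\mathcal{B}}$-formula with parameters in $K$ has a solution in $L$; I may assume the solution generates a finitely generated extension. The point generating this solution, together with its images under the coordinate maps $\partial_1,\dots,\partial_e$ of the operator, determines a tuple $a'=(a,\bar a)$ and hence varieties $V=\locus_K(a)$ and $W=\locus_K(a')$. By Lemma \ref{easylemma2} the inclusion $W\subseteq\tau^{\partial}(V)$ holds, $W$ projects generically onto $V$ by construction, and since $\partial_L$ genuinely extends $\partial$ to $K(a')$, Proposition \ref{kerprol} (in the direction $(2)\Rightarrow(1)$) guarantees that $\pi_E$ is dominant, i.e.\ $E$ projects generically onto $W$. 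Thus the geometric hypotheses of an axiom are satisfied, so the axiom furnishes $x\in V(K)$ with $\partial_V(x)\in W(K)$, which realizes the original formula already in $K$.

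The main obstacle, or rather the point requiring the most care, is the bookkeeping that reduces an arbitrary existential statement over $(K,\partial)$ to the specific geometric data $(V,W)$ and the scheme $E$: one must check that the defining polynomial conditions of $\tau^{\partial}V$ and of $E$ are themselves quantifier-free definable over $K$ in $L_{\mathcal{B}}$ (so that the axiom scheme is first-order and the transfer of solutions is legitimate), and that Assumption \ref{ass2} is genuinely used, via Proposition \ref{kerprol} and Corollary \ref{ver219}, to pass between extendability to a field extension and extendability within $K(a')$ itself. Because all of these ingredients have been prepared in Section \ref{secprol}, the argument is essentially formal, which is why I would present it briefly and refer to the analogous proofs in \cite{BHKK}, \cite{K2}, and \cite{BK} for the routine details.
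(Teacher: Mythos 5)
Your proposal is correct and takes essentially the same approach as the paper, which itself only sketches this proof by citing the standard arguments (e.g.\ \cite[Theorem 3.8]{BHKK}): the direction $(1)\Rightarrow(2)$ via Corollary \ref{ver219} plus existential closedness, and $(2)\Rightarrow(1)$ via Lemma \ref{easylemma2} and Proposition \ref{kerprol}. Two cosmetic repairs: in the converse direction the witness tuple must be closed under \emph{iterated} applications of the operators up to the depth of the given formula (one application does not suffice in general), and the implication of Proposition \ref{kerprol} you actually invoke is $(3)\Rightarrow(1)$ (or $(4)\Rightarrow(1)$), since $K(a')$ need not be closed under $\partial_L$ --- both are routine.
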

We would like to make another comment regarding the terminology.
\begin{remark}
When we say that ``$\mathcal{B}-\mathrm{CF}$ exists'', we mean that the theory $\mathcal{B}-\mathrm{F}$ has a model companion. In particular, for a finite $\ka$-algebra $B$ saying that ``$B_{\otimes}-\mathrm{CF}$ exists'' has the same meaning as the phrase ``$B-\mathrm{DCF}$ exists'' from \cite{BHKK}.
\end{remark}
Before stating our main theorem about companionability of the theory of $\mathcal{B}$-fields, we need an easy lemma clarifying its assumptions.
 \begin{lemma}\label{compeq}
 Let $\ka\subseteq K,\ka\subseteq L$ be field extensions such that $K$ and $L$ are perfect. Then, the theory $\mathcal{B}(K)_{\otimes}-\mathrm{CF}$ exists if and only if the theory $\mathcal{B}(L)_{\otimes}-\mathrm{CF}$ exists.
 \end{lemma}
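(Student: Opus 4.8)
The plan is to reduce the statement to the algebraic classification result (Theorem~\ref{classop}) and then invoke the bi-interpretability philosophy from Remark~\ref{biint}. The theory $\mathcal{B}(K)\text{-DCF}$ exists precisely when the theory of fields equipped with a $\mathcal{B}(K)_\otimes$-operator (equivalently, a $B$-operator for $B=\mathcal{B}(K)$ in the sense of \cite{BHKK}) has a model companion. So the content of the lemma is that, for perfect $K$ and $L$ extending $\ka$, the finite $\ka$-algebras $\mathcal{B}(K)$ and $\mathcal{B}(L)$ give ``the same'' model-companionability behaviour. The natural bridge is Corollary~\ref{perext}, which tells us how $\mathcal{B}(K)$ and $\mathcal{B}(L)$ are related to the single finite $\ka$-algebra $\mathcal{B}(\ka)$.

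First I would observe that by symmetry it suffices to prove one implication, say that existence of $\mathcal{B}(K)\text{-DCF}$ implies existence of $\mathcal{B}(L)\text{-DCF}$. The key algebraic input is Corollary~\ref{perext}: since $\ka\subseteq K$ and $\ka\subseteq L$ are extensions of perfect fields, we have $\ka$-algebra isomorphisms
\begin{equation*}
\mathcal{B}(K)\cong \mathcal{B}(\ka)\otimes_{\ka}K,\qquad \mathcal{B}(L)\cong \mathcal{B}(\ka)\otimes_{\ka}L.
\end{equation*}
Thus both finite algebras are base changes of the \emph{same} finite $\ka$-algebra $B:=\mathcal{B}(\ka)$. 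In particular $\dim_K\mathcal{B}(K)=\dim_\ka B=\dim_L\mathcal{B}(L)=e$ (using Theorem~\ref{dime}), so the two theories live in languages with the same number of operator symbols, and more importantly the structure constants of the multiplication on $\mathcal{B}(K)$ relative to a basis coming from a fixed $\ka$-basis of $B$ are literally the structure constants of $B$ (the tensor product does not alter them). This means the class of $\mathcal{B}(K)$-operators and the class of $\mathcal{B}(L)$-operators are defined by the same polynomial identities $F_1,\dots,F_e$ over $\ka$ in the sense of Remark~\ref{mscontext}(2).

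Once this is in place, the real point is that the existence of a model companion for a ``$B$-DCF''-type theory depends only on the finite $\ka$-algebra $B$ together with the defining identities, and not on which perfect overfield we compute $\mathcal{B}(\ka)$ over. Concretely, the companionability criterion one wants to apply is the one underlying Theorem~\ref{mainthm}: the geometric axioms make sense, and the model companion exists, exactly when Assumption~\ref{ass2} holds for the relevant ring scheme. By Remark~\ref{assrem}(1), Assumption~\ref{ass2} for $B_\otimes$ is the purely algebraic condition that $B$ is local with $\nil(B)=\ker(\fr_B)$, and this condition is visibly preserved under base change along a perfect (indeed separable) field extension, since localness and the nilradical are detected by the same structure constants. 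Hence Assumption~\ref{ass2} holds for $\mathcal{B}(K)_\otimes$ if and only if it holds for $\mathcal{B}(L)_\otimes$, and in that case both model companions exist by Theorem~\ref{mainthm}.

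The step I expect to be the main obstacle is making the phrase ``depends only on $B$ and the identities'' fully rigorous, i.e. confirming that the companionability criterion is genuinely invariant under replacing $\ka$ by a perfect overfield in a way that matches the two sides. The subtlety is that the geometric axiom scheme for $\mathcal{B}(K)\text{-DCF}$ is formulated over $K$-varieties, while $\mathcal{B}(L)\text{-DCF}$ uses $L$-varieties, so one must argue that the \emph{existence} of the companion is insensitive to this change of base field. The cleanest route is to avoid axiom-by-axiom comparison altogether and instead note that companionability is equivalent to Assumption~\ref{ass2} (via Theorem~\ref{mainthm}, whose hypothesis is exactly Assumption~\ref{ass2}), and then to verify the stability of Assumption~\ref{ass2} under perfect base change using the local/nilradical reformulation of Remark~\ref{assrem}(1). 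I would therefore structure the proof as: (1) identify $\mathcal{B}(K)$ and $\mathcal{B}(L)$ as base changes of $B=\mathcal{B}(\ka)$ via Corollary~\ref{perext}; (2) observe that Assumption~\ref{ass2}, in its algebraic form from Remark~\ref{assrem}(1), is equivalent for $B\otimes_\ka K$ and $B\otimes_\ka L$; and (3) conclude by Theorem~\ref{mainthm} that one companion exists if and only if the other does.
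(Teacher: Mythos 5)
Your proposal has two genuine gaps. The first is in your step (1): you apply Corollary \ref{perext} to the extensions $\ka\subseteq K$ and $\ka\subseteq L$, but that corollary requires an extension of \emph{perfect} fields, and Lemma \ref{compeq} does not assume that $\ka$ is perfect --- only $K$ and $L$ are. This is not a pedantic point: the whole reason this lemma is stated the way it is (see the remark following Theorem \ref{supmain}) is to handle non-perfect $\ka$, in which case $\mathcal{B}(\ka)$ need not even be a finite $\ka$-algebra, so the object $B:=\mathcal{B}(\ka)$ and the ``common structure constants'' your argument is built on may simply not exist in the required sense (Theorem \ref{dime} and Theorem \ref{classop} also need $\ka$ perfect). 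The paper avoids this by choosing a common perfect overfield $M$ containing (copies of) both $K$ and $L$, and applying Corollary \ref{perext} to the extensions $K\subseteq M$ and $L\subseteq M$, which \emph{are} extensions of perfect fields; this yields $\mathcal{B}(K)\otimes_K M\cong\mathcal{B}(M)\cong\mathcal{B}(L)\otimes_L M$, and the comparison is then made through $M$ rather than through $\ka$.

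The second gap is your claim that the model companion exists ``exactly when Assumption \ref{ass2} holds,'' with Theorem \ref{mainthm} cited as the equivalence. Theorem \ref{mainthm} gives only the sufficiency of Assumption \ref{ass2}; its converse is false. The correct criterion --- the one the paper's proof invokes, from \cite[Corollary 3.9]{BHKK} together with \cite[Lemma 2.6]{BHKK} --- is a disjunction: for a finite algebra $B$ as in Example \ref{firstex}(1), the theory $B$-DCF exists if and only if $\fr_B(\ker(\pi_B))=0$ \emph{or} $B$ is separable (\'etale) over the base field. Your criterion misses the separable disjunct entirely: for instance, $B=K\times K$ (whose $B$-operators are endomorphisms, as in Example \ref{bopex}(2)(a)) is not local, so Assumption \ref{ass2} fails, yet $B$-DCF exists --- this is precisely the situation handled in Case 2 of the proof of Theorem \ref{supmain}. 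With your claimed equivalence, your argument would either reach a wrong conclusion or simply fail to establish the lemma in those cases. The repair is to replace ``Assumption \ref{ass2}'' by the full BHKK disjunction and then verify that \emph{both} disjuncts are stable (and reflected) under base change along field extensions; your observation about the stability of the condition $\fr_B(\ker(\pi_B))=0$ remains correct and is exactly half of what is needed, the other half being the standard fact that separability of a finite algebra is insensitive to field base change.
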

\begin{proof}
We take a perfect field $M$ such that (up to an isomorphism) $K$ and $L$ are subfields of $M$. By Corollary \ref{perext}, we have:
$$\mathcal{B}(K)\otimes_KM\cong \mathcal{B}(M) \cong\mathcal{B}(L)\otimes_LM.$$
Using \cite[Corollary 3.9]{BHKK} and  \cite[Lemma 2.6]{BHKK}, we get that for any finite $\ka$-algebra $B$ as in Example \ref{firstex}(1), the theory $B_{\otimes}$-CF exists if and only if $\fr_B(\ker(\pi_B))=0$ or $B$ is separable (or \'{e}tale) over $\ka$. It is easy to check that the condition ``$\fr_B(\ker(\pi_B))=0$''
is stable under base extension, and it is well-known that the same holds for the separability condition (see e.g. \cite[Section 6.2]{Water}).
\end{proof}
\begin{theorem}\label{supmain}
Assume there is a perfect field $K$ extending $\ka$ such that the theory of $\mathcal{B}(K)_{\otimes}$-fields has a model companion. Then, the theory of $\mathcal{B}$-fields is companionable.
\end{theorem}
\begin{proof}
Let us denote $B:=\mathcal{B}(\ka^{\alg})$. By our assumption, the theory of $B$-fields is companionable. As in the proof of Lemma \ref{compeq}, we consider  two cases.
\smallskip
\\
\textbf{Case 1} $\fr_B(\ker(\pi_B))=0$
\\
Since it is enough to check the condition ``$\fr_{\mathcal{B}}\left(\ker(\pi)\right)=0$'' on rational points over an algebraically closed field, we get that Assumption \ref{ass2} holds. Therefore, we obtain the result by Theorem \ref{mainthm}.
\smallskip
\\
\textbf{Case 2} \emph{$B$ is separable over $\ka^{\alg}$.}
\\
We have that:
$$B\cong \left(\ka^{\alg}\right)^{\times e}.$$
By Example \ref{twistex}(2), we get the following isomorphism of ``ring schemes over $\pi$'':
$$\mathcal{B}_{\ka^{\alg}}\cong \left(\mathbb{S}_{\ka^{\alg}}\right)^{\times e}.$$
Using Remark \ref{biint}, we can conclude as in the proof of \cite[Corollary 3.9]{BHKK}.
\end{proof}
\begin{remark}
\begin{enumerate}
\item It is tempting to try to replace the assumption of Theorem \ref{supmain} with the condition: ``$\mathcal{B}(\ka)_{\otimes}$-CF exists''. However, if $\ka$ is not perfect, then the $\ka$-algebra $\mathcal{B}(\ka)$ need not be finite, and in such a case we do not even have any notion of a $\mathcal{B}(\ka)$-operator, so there is no corresponding theory, for which the existence of a model companion could be considered.

\item By Theorem \ref{classop}, if $\ka$ is perfect, then each $\ka$-algebra scheme $\mathcal{B}$ comes from a ``Frobenius transport'' of a $\ka$-algebra scheme $B_{\otimes}$. Thus ``companionability is coded in $B$'', that is: if $B_{\otimes}$-CF exists, then $\bdcf$ exists as well.

\item It is natural to ask whether Theorem \ref{supmain} can be strengthened to an ``if and only if'' statement. Such issues will be considered in the next subsection.
\end{enumerate}
\end{remark}

\subsection{Negative results and related questions}\label{secneg}
In this part of the paper, we discuss the following question.
\begin{question}\label{mainq}
Does the opposite implication to the one from Theorem \ref{supmain} hold?
Namely, if $\bdcf$ exists, then is it true that $\mathcal{B}(K)_{\otimes}$-CF exists for some (equivalently: \emph{any} by Lemma \ref{compeq}) field extension $\ka\subseteq K$ such that $K$ is perfect?
\end{question}
For simplicity, let as assume in this subsection that the base field $\ka$ is algebraically closed. Since the answer to Question \ref{mainq} is positive in the case of characteristic $0$, we assume that $\mathrm{char}(\ka)=p>0$. We consider the contrapositive of the statement from Question \ref{mainq}, that is: we assume that $\mathcal{B}(\ka)$-CF does not exist and we will discuss whether the theory $\bdcf$ exists.

Let us denote $B:=\mathcal{B}(\ka)$. By \cite[Corollary 3.9]{BHKK}, we have the following two mutually exclusive cases.
\begin{enumerate}
\item[\textbf{Case 1}] There is an isomorphism of $\ka$-algebras:
$$B\cong B_1\times \ldots \times B_n,$$
where $n>1$, each $B_i$ is a local $\ka$-algebra, and there is $i$ such that $B_i$ is not a finite separable field extension (in our case, equivalently: $B_i\neq \ka$);

\item[\textbf{Case 2}] The $\ka$-algebra $B$ is local and $\mathrm{Nil}(B)\neq \ker(\fr_B)$.
\end{enumerate}
 We quickly recall below the non-existence arguments from \cite{MS2} and \cite{BHKK}.
\begin{remark}\label{remprin}
It is quite clear that coordinate $\ka$-algebra schemes (considered as endofunctors on the category $\mathrm{Alg}_{\ka}$) can be composed and the resulting functors are coordinate $\ka$-algebra schemes as well. For $m>0$, we denote by $\mathcal{B}^{(m)}$ the composition of $\mathcal{B}$ with itself $m$ times. If $\partial$ is a $\mathcal{B}$-operator on a $\ka$-algebra $R$, then $\partial^{(m)}$ denotes the following $\mathcal{B}^{(m)}$-operator on $R$:
$$\partial^{(m)}:R\ra \mathcal{B}^{(m)}(R),\ \ \ \ \ \ \ \ \partial^{(m)}:=\mathcal{B}^{(m-1)}(\partial)\circ \ldots \circ \mathcal{B}(\partial)\circ \partial.$$
The proof of \cite[Proposition 7.2]{MS2} (the Case 2 situation for $\mathcal{B}=B_{\otimes}$, but without assuming that $B$ is local) is based on the following principle.

Suppose that for each $m>0$, there are $\mathcal{B}$-fields $F_m$ and $x_m\in F_m$ such that:
\begin{enumerate}
\item for all $i<m$, we have:
$$\partial^{(i)}(x_m)\in \left(\mathcal{B}^{(i)}\left(F_m^{\alg}\right)\right)^p;$$

\item we also have:
$$\partial^{(m)}(x_m)\notin \left(\mathcal{B}^{(m)}\left(F_m^{\alg}\right)\right)^p.$$
\end{enumerate}
Then, the theory $\bdcf$ does not exist.
\end{remark}
Item $(1)$ from Remark \ref{remprin} says that the elements $x_m$ are ``closer and closer'' to having a $\mathcal{B}$-field extension containing $x_m^{1/p}$, but Item $(2)$ implies that such an extension does not exist. Hence, having Items $(1)$ and $(2)$ from Remark \ref{remprin}, it is easy to construct a partial type which is finitely satisfiable and not satisfiable in any existentially closed $\mathcal{B}$-field. This argument also works in the situation from Case 2, but it was not explicitly used in \cite{BHKK}, where the case of $\mathcal{B}=B_{\otimes}$ was considered. We will see below that it works in our more general case as well.
\begin{prop}
Suppose that $B$ satisfies the condition from Case $(1)$ above. Then, the theory $\bdcf$ does not exist.
\end{prop}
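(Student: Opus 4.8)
The plan is to reduce the Case 1 situation to a non-existence argument via the principle from Remark \ref{remprin}. The key structural fact is that in Case 1 the $\ka$-algebra $B=\mathcal{B}(\ka)$ decomposes as $B\cong B_1\times\ldots\times B_n$ with $n>1$, and some factor $B_i$ is a local $\ka$-algebra which is not a field. Up to reindexing I may assume $B_1$ is not a field, so $B_1$ contains a nonzero nilpotent element, equivalently $\ker(\fr_{B_1})\cap\nil(B_1)\neq 0$ (or at least $B_1$ has a nonzero element killed by the projection to $\ka$). The presence of more than one factor means $B$ has a nontrivial idempotent $\varepsilon$, i.e. $\varepsilon^2=\varepsilon$, $\varepsilon\neq 0,1$, and the presence of a non-field factor gives a nonzero nilpotent $\nu$ with $\varepsilon\nu=\nu$. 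Translating back to the scheme level via Theorem \ref{classop}, which describes $\mathcal{B}$ as a Frobenius transport of $B_\otimes$, the $\mathcal{B}$-operator structure on any $\ka$-algebra $R$ is given by a tuple $\partial=(\partial_1,\ldots,\partial_e)$ whose multiplicative constraints are governed by the algebra structure of $B$.

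The strategy is then to exhibit, for each $m>0$, a $\mathcal{B}$-field $F_m$ and an element $x_m\in F_m$ witnessing Items (1) and (2) of Remark \ref{remprin}. The construction should follow the Case 1 argument of \cite[Proposition 7.2]{MS2} closely. The idempotent $\varepsilon$ splits the operator into components, and the standard trick is to build $F_m$ as a purely transcendental (or iterated) extension of $\ka$ carrying an operator for which the ``idempotent component'' of $x_m$ behaves like an endomorphism-flavoured coordinate while the ``nilpotent component'' accumulates an obstruction to taking $p$-th roots. Concretely, I would arrange that applying $\partial^{(i)}$ for $i<m$ lands the value in the subgroup of $p$-th powers $\left(\mathcal{B}^{(i)}(F_m^{\alg})\right)^p$ — this is the ``getting closer'' part — while at level $m$ the nilpotent direction forces $\partial^{(m)}(x_m)$ out of $\left(\mathcal{B}^{(m)}(F_m^{\alg})\right)^p$. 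The existence of the idempotent is what lets me isolate a coordinate on which the operator acts like an automorphism, so that iterating does not prematurely destroy the $p$-th power membership, and the existence of the genuine nilpotent is what eventually produces the failure at stage $m$.

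Having produced the families $\{(F_m,x_m)\}_{m>0}$, I would invoke Remark \ref{remprin} directly: Items (1) and (2) hold by construction, so the principle stated there immediately yields that $\bdcf$ does not exist. The compactness-type argument behind the principle — that one assembles a partial type asserting the existence of $x^{1/p}$ compatible with the operator at all finite levels, which is finitely satisfiable by the $F_m$ but realized in no existentially closed $\mathcal{B}$-field — is already packaged into Remark \ref{remprin}, so I need only supply the witnesses. The main obstacle I anticipate is the careful bookkeeping needed to verify Item (1) for all $i<m$ simultaneously while keeping Item (2) at level $m$; this requires tracking how the composed functor $\mathcal{B}^{(i)}$ interacts with $p$-th powers, and in our more general (Frobenius-transported) setting one must confirm that the transport by powers of Frobenius from Theorem \ref{classop} does not interfere with the $p$-th power membership computations. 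Since the transport maps are built from $\fr^{n_j}$, which send $p$-th powers to $p$-th powers, I expect this interaction to be benign, but it is the step where the passage from the $B_\otimes$ case of \cite{MS2} to an arbitrary $\mathcal{B}$ genuinely needs to be checked rather than quoted.
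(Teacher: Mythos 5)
Your overall strategy is the paper's: produce, for each $m>0$, a $\mathcal{B}$-field $F_m$ and an element $x_m\in F_m$ satisfying Items $(1)$ and $(2)$ of Remark \ref{remprin}, then invoke that remark. You also correctly identify the two structural ingredients --- the nontrivial idempotent coming from $n>1$ and the nonzero nilpotent coming from a non-field factor --- and the role each must play. However, your text never actually constructs the witnesses: phrases like ``I would arrange that applying $\partial^{(i)}$ for $i<m$ lands the value in the subgroup of $p$-th powers \ldots{} while at level $m$ the nilpotent direction forces $\partial^{(m)}(x_m)$ out'' restate Items $(1)$ and $(2)$ rather than verify them, and the ``bookkeeping'' you defer at the end is precisely the mathematical content of the proof. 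As it stands, this is a proof plan, not a proof; moreover, since the whole point of the proposition is that $\mathcal{B}$ is a general coordinate $\ka$-algebra scheme rather than some $B_\otimes$, an appeal to following \cite{MS2} ``closely'' cannot substitute for exhibiting the construction in this wider setting.

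The missing idea is a concrete chain construction. The paper writes $(\mathcal{B},+)=\ga^n\times\ga^{e-n}$, where $\ga^n$ corresponds to the residue fields of the local factors, and observes the key fact $\ga^n\subseteq \mathrm{Im}\left(\fr_{\mathcal{B}}\right)$: over an algebraically closed field, any element supported on these ``\'{e}tale'' coordinates is a $p$-th power, while every $p$-th power vanishes on the nilpotent coordinates. Then (reducing for simplicity to $n=2$, $e=3$, $\pi(x,y,z)=x$) it takes $F_m:=\Ff_p(X_1,\ldots,X_m)$ with $\partial(X_i):=(X_i,X_{i+1},0)$ for $i<m$, $\partial(X_m):=(X_m,0,1)$, and $x_m:=X_1$. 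The shift along the chain keeps every iterate $\partial^{(i)}(X_1)$ with $i<m$ supported on (iterated) \'{e}tale coordinates, which gives Item $(1)$; at step $m$ the entry $1$ appears on a nilpotent coordinate, which gives Item $(2)$. Note in particular that the nilpotent coordinate must remain \emph{zero} until exactly step $m$: your phrase that the nilpotent component ``accumulates an obstruction'' would, taken literally, destroy Item $(1)$. Finally, your closing worry about the Frobenius transport from Theorem \ref{classop} is legitimate, but the paper's argument sidesteps it by working directly with $\mathrm{Im}\left(\fr_{\mathcal{B}}\right)$ inside $\mathcal{B}$ itself, rather than transporting $p$-th power computations back and forth to $B_\otimes$.
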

\begin{proof}
We can represent the group scheme $(\mathcal{B},+)$ as:
$$(\mathcal{B},+)=\ga^n\times \ga^{e-n},$$
where $\ga^n$ corresponds to the embeddings of $\ka$ into each $B_1,\ldots,B_n$. By the assumptions from Case 1, we get that $1<n<e$. Then, we clearly have:
$$\ga^n\subseteq \mathrm{Im}\left(\fr_{\mathcal{B}}\right).$$
For simplicity, we assume that $n=2$, $e=3$, and $\pi(x,y,z)=x$. We set the following for each $i,m$ such that $0<i<m$:
$$F_m:=\ka(X_1,\ldots,X_m),\ \ \ \ \ \partial(X_i)=\left(X_i,X_{i+1},0\right),\ \ \ \ \ \ \partial(X_m)=(X_m,0,1).$$
It is easy to check that for $x_m:=X_1$, we get the tuples $(F_m,x_m)$ satisfying Items $(1)$ and $(2)$ from Remark \ref{remprin}, hence the theory $\bdcf$ does not exist.
\end{proof}
Using the result above, we can assume that we are in the Case 2 situation, that is: $B$ is local and $\mathrm{Nil}(B)\neq \ker(\fr_B)$. By Theorem \ref{classop}, $\mathcal{B}$ comes from $B_{\otimes}$ using the transport (see Example \ref{twistex}) by $(\id,\fr^{n_2},\ldots,\fr^{n_e})$ for some $n_2,\ldots,n_e\in \Nn$.
By very similar methods as in the proof of \cite[Proposition 7.2]{MS2}, we can show the following (we skip the proof).
\begin{prop}
Suppose that there is $n\in \Nn$ such that we have:
$$\mathcal{B}\cong \left(B_{\otimes}\right)^{\fr^n}$$
(that is: $n_2=\ldots =n_e=n$). Then, the  theory $\bdcf$ does not exist.
\end{prop}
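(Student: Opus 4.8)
The plan is to invoke the non-existence principle recorded in Remark \ref{remprin}: it suffices to produce, for every $m>0$, a $\mathcal{B}$-field $F_m$ and an element $x_m\in F_m$ for which $\partial^{(i)}(x_m)\in (\mathcal{B}^{(i)}(F_m^{\alg}))^p$ holds for all $i<m$ but fails for $i=m$. Since $B$-DCF does not exist in the present Case 2 situation, such witnesses already exist for the untwisted scheme $B_{\otimes}$ by the proof of \cite[Proposition 7.2]{MS2}; the whole point is to transport that construction across the uniform Frobenius twist $\mathcal{B}=(B_{\otimes})^{\fr^n}$.

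The key structural observation, which I would isolate first, is that a \emph{uniform} twist changes nothing on the kernel of $\pi$. Indeed, inspecting the multiplication formulas of Example \ref{twistex} with $n_2=\dots=n_e=n$, one sees that the Frobenius power $\fr^n$ enters only through the action of the first coordinate $\iota(\ka)$: setting $x_1=y_1=0$, the product of two elements of $\ker(\pi)$ is computed by the \emph{same} bilinear formula (the structure constants of $B$) as in $B_{\otimes}$, with no extra Frobenius, since the cross exponents are $p^{n-n}=1$. Consequently the restriction of the ring-Frobenius $\fr_{\mathcal{B}}$ to $\ker(\pi)$ agrees with the restriction of $\fr_B$ to $\ker(\pi_B)$, so the Case 2 hypothesis $\fr_B(\ker(\pi_B))\neq 0$ passes verbatim to $\mathcal{B}$: there is $b\in \ker(\pi)$ with $b^{\ast p}=\fr_{\mathcal{B}}(b)\neq 0$. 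This surviving Frobenius direction $b$ is exactly the obstruction that drives the argument of \cite[Proposition 7.2]{MS2}.

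With $b$ in hand I would build the witnesses as in MS2: take $F_m$ to be a rational function field over $\Ff_p$ in variables arranged in a chain, and define the $\mathcal{B}$-operator by a ``Frobenius-delayed shift'' along the direction $b$, capped at the $m$-th variable by a constant value in the $b$-direction. Condition $(1)$ then records that the first $m-1$ iterates of $\partial$ applied to $x_m$ involve only data already pushed into $p$-th powers, while condition $(2)$ records that at level $m$ the surviving component $b^{\ast p}\neq 0$ forces $\partial^{(m)}(x_m)$ out of $(\mathcal{B}^{(m)}(F_m^{\alg}))^p$. The verification is the same bookkeeping as in MS2 once the structural observation of the previous paragraph lets us replace $B_{\otimes}$-multiplication by $\mathcal{B}$-multiplication on $\ker(\pi)$.

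I expect the main obstacle to be precisely this bookkeeping through the iterated compositions $\mathcal{B}^{(i)}$: one must track how the $p^n$-Frobenius coming from the twist propagates through each composition and interacts with membership in the $p$-th powers, and check that uniformity $n_2=\dots=n_e=n$ is what keeps the exponents aligned at every level. For a non-uniform twist the cross exponents $p^{n_j-n_k}$ reappear, as in Example \ref{twistex}(1), and this alignment breaks, which is why the proposition is stated only for the uniform case.
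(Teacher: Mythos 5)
Your proposal follows essentially the same route as the paper: the paper in fact gives no proof at all, stating only that the result follows ``by very similar methods as in the proof of \cite[Proposition 7.2]{MS2}'' --- i.e., by producing witnesses for the non-existence principle of Remark \ref{remprin} --- which is precisely your plan. Your key structural observation, that the uniform twist leaves the multiplication on $\ker(\pi)$ unchanged (so that the Frobenius endomorphism of $\mathcal{B}$, and hence of each iterate $\mathcal{B}^{(i)}$, agrees with the untwisted one, and the MS2 witnesses transport to genuine $\mathcal{B}$-operators), is correct and is exactly the point that makes the paper's one-line justification legitimate.
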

By the last result, we are left with the situation from Case 2, where moreover not all the $n_i$'s are equal to each other. However, even the simplest example of such a situation is hard to tackle as we will see below.

Let us take $\mathcal{B}$ from Example \ref{twistex}(1) for $m=1$ and $n=p=2$. Then, we have the following formulas for $\fr_{\mathcal{B}}$ and
$\fr_{\mathcal{B}^{(2)}}$:
$$(x,y,z)^p=\left(x^p,0,y^{p^2}\right),$$
$$\left((x_1,x_2,x_3),(y_1,y_2,y_3),(z_1,z_2,z_3)\right)^p=\left(\left(x_1^p,0,x_2^{p^2}\right),(0,0,0),\left(y_1^{p^2},0,0\right)\right).$$
The next lemma shows that it is impossible in this case to find $\mathcal{B}$-fields $F_m$ as in Remark \ref{remprin}. Actually, the desired construction of a non-satisfiable and finitely satisfiable (in any existentially closed $\mathcal{B}$-field) partial type already fails after the second step!
\begin{lemma}\label{counter}
Let $(K,\partial)$ be a $\mathcal{B}$-field ($\mathcal{B}$ is as above) and $x\in K$ be such that:
$$\partial(x)\in \left(\mathcal{B}\left(K^{\alg}\right)\right)^p,\ \ \
\partial^{(2)}(x)\in \left(\mathcal{B}^{(2)}\left(K^{\alg}\right)\right)^p.$$
Then, for any $i>0$ we have that:
$$\partial^{(i)}(x)\in \left(\mathcal{B}^{(i)}\left(K^{\alg}\right)\right)^p.$$
\end{lemma}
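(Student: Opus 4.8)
The plan is to translate both hypotheses into explicit vanishing conditions on the two component maps $\partial_2,\partial_3$ of $\partial$ (recall $\partial(y)=(y,\partial_2 y,\partial_3 y)$ since $\pi$ is the first projection) and then run a two-layer induction. First I would exploit that $K^{\alg}$ is perfect: since $\fr_{\mathcal{B}}(a,b,c)=(a^p,0,b^{p^2})$, every triple $(a,0,b)$ with vanishing middle coordinate is a $p$-th power over $K^{\alg}$, so $\left(\mathcal{B}(K^{\alg})\right)^p$ is \emph{exactly} the set of triples whose middle coordinate is $0$. Hence the first hypothesis $\partial(x)\in\left(\mathcal{B}(K^{\alg})\right)^p$ is simply $\partial_2 x=0$. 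Unwinding $\partial^{(2)}(x)=\bigl(\partial(x),\partial(\partial_2 x),\partial(\partial_3 x)\bigr)$ against the displayed formula for $\fr_{\mathcal{B}^{(2)}}$, and using perfectness again, the second hypothesis $\partial^{(2)}(x)\in\left(\mathcal{B}^{(2)}(K^{\alg})\right)^p$ reduces (after discarding the conditions already forced by $\partial_2 x=0$) to the two further equalities $\partial_2\partial_3 x=0$ and $\partial_3^2 x=0$. So the true content of the hypotheses is the three vanishing conditions $\partial_2 x=0$, $\partial_2\partial_3 x=0$, $\partial_3^2 x=0$, and the key gain from working over $K^{\alg}$ is that ``being a $p$-th power'' has become ``certain coordinates vanish''.

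Next I would record the recursive shape of both the Frobenius images and the iterate. Using $\fr_{\mathcal{B}^{(i)}}(X,Y,Z)=\bigl(\fr_{\mathcal{B}^{(i-1)}}(X),0,\fr^2_{\mathcal{B}^{(i-1)}}(Y)\bigr)$ (this is $\fr_{\mathcal{B}}$ computed in the ring $\mathcal{B}^{(i-1)}(K^{\alg})$), one gets over the algebraically closed $K^{\alg}$ the clean descriptions
\[
(X,Y,Z)\in P_i \iff X\in P_{i-1},\ Y=0,\ Z\in Q_{i-1},
\]
\[
(X,Y,Z)\in Q_i \iff X\in Q_{i-1},\ Y=0,\ Z=0,
\]
where $P_i:=\left(\mathcal{B}^{(i)}(K^{\alg})\right)^p$, $Q_i$ denotes the image of $\fr^2_{\mathcal{B}^{(i)}}$, and $P_0=Q_0=K^{\alg}$. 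On the operator side, the iterate $\partial^{(i)}=\mathcal{B}(\partial^{(i-1)})\circ\partial$ unwinds coordinatewise as
\[
\partial^{(i)}(x)=\bigl(\partial^{(i-1)}(x),\,\partial^{(i-1)}(\partial_2 x),\,\partial^{(i-1)}(\partial_3 x)\bigr),
\]
which is the decomposition matching the triple $(X,Y,Z)$ above.

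Finally I would close with the induction. An auxiliary claim comes first: $\partial^{(i)}(\partial_3 x)\in Q_i$ for all $i\ge 0$. The base case $i=0$ is trivial since $Q_0=K^{\alg}$, and the $Q$-recursion reduces the inductive step to $\partial^{(i-1)}(\partial_3 x)\in Q_{i-1}$ together with $\partial^{(i-1)}(\partial_2\partial_3 x)=0$ and $\partial^{(i-1)}(\partial_3^2 x)=0$; the last two hold because $\partial_2\partial_3 x=0$ and $\partial_3^2 x=0$ and every $\partial^{(j)}$, being additive, sends $0$ to $0$. The main claim $\partial^{(i)}(x)\in P_i$ then follows by induction on $i$: the $P$-recursion reduces it to $\partial^{(i-1)}(x)\in P_{i-1}$ (the inductive hypothesis), $\partial^{(i-1)}(\partial_2 x)=0$ (automatic from $\partial_2 x=0$), and $\partial^{(i-1)}(\partial_3 x)\in Q_{i-1}$ (the auxiliary claim). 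I expect the only delicate point to be the bookkeeping in the middle step, namely computing $\im(\fr_{\mathcal{B}^{(i)}})$ and $\im(\fr^2_{\mathcal{B}^{(i)}})$ correctly and aligning their coordinatewise vanishing with the recursive decomposition of $\partial^{(i)}(x)$; once the perfectness reduction is in place, the rest is a routine nested induction.
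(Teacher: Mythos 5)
Your proof is correct and follows essentially the same route as the paper's: both reduce the two hypotheses, using perfectness of $K^{\alg}$ to identify Frobenius images with coordinate-vanishing conditions, to $\partial_2 x=0$, $\partial_2\partial_3 x=0$, $\partial_3^2 x=0$ (i.e., $z:=\partial_3 x\in K^{\partial}$). The only difference is that the paper stops there with ``which finishes the proof'', whereas you explicitly carry out the concluding nested induction via the recursive descriptions of the images of $\fr_{\mathcal{B}^{(i)}}$ and $\fr^2_{\mathcal{B}^{(i)}}$ --- a detail the paper leaves to the reader, and which you verify correctly.
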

\begin{proof}
Let $\partial(x)=(x,y,z)$. Then we have:
$$\partial^{(2)}(x)=\left(\partial(x),\partial(y),\partial(z)\right).$$
By our assumption on $\partial(x)$, we get that $y=0$. So, we have:
$$\partial^{(2)}(x)=((x,0,z),(0,0,0),\partial(z)).$$
By our assumption on $\partial^{(2)}(x)$, we get that $\partial(z)=(z,0,0)$. Hence $z\in K^{\partial}$, which finishes the proof.
\end{proof}
The above lemma also shows that $\mathcal{B}$ from Example \ref{twistex}(1) is ``close to satisfy Assumption \ref{ass2}'', since we have the following.
\begin{itemize}
\item If $\mathcal{B}$ satisfies Assumption \ref{ass2}, then for any $\ka$-algebra $R$ we have:
$$\mathcal{B}(R)^p=\iota_R\left(R^p\right).$$
Hence, if $(K,\partial)$ is a $\mathcal{B}$-field, $x\in K$, and $\partial(x)\in \mathcal{B}(K)^p$, then $x\in K^{\partial}$; in particular: for all $i>0$ we have that:
$$\partial^{(i)}(x)\in \left(\mathcal{B}^{(i)}\left(K^{\alg}\right)\right)^p.$$
Thus, a strong form of Lemma \ref{counter} holds for $\mathcal{B}$ satisfying Assumption \ref{ass2}.

\item If $x,\mathcal{B}$ are as in Lemma \ref{counter}, then it need not be true that $x\in K^{\partial}$ but ``$x$ is a constant modulo a constant'', that is: $\partial(x)=(x,0,z)$, where $z\in K^{\partial}$.
\end{itemize}
Unfortunately, the example below shows that $\mathcal{B}$ from Example \ref{twistex}(1) is not ``close enough to satisfy Assumption \ref{ass2}'', since the conclusion of Corollary \ref{ver216} does not hold for this choice of $\mathcal{B}$.
\begin{example}
This example is a modified version of \cite[Example 2.7]{BHKK}. We use here the interpretation of rational points of prolongations as certain $\mathcal{B}$-operators, which was pointed out above Remark \ref{ver213}.

We define $K,\partial,W,c,b$ as in the set-up of Corollary \ref{ver216}. Let $K:=\ka(x,y)$, where $x,y\in \Omega$ are algebraically independent over $\ka$.
We set $\partial(x)=(x,0,y)$ and $\partial(y)$ in an arbitrary way. We also set (recall that $p=2$):
$$W:=\spec(K[x^{1/2}]),\ \ \ \ c:K[x^{1/2}]\to \mathcal{B}(\Omega),\ \ \  x^{1/2}\mapsto \left(x^{1/2}, y^{1/4}, 0\right),$$
so $K(b)=K(x^{1/2})$. Then, we have that $\tau^{\partial}_bW(K(b))=\emptyset$, since $\partial$ can not be extended to a $\mathcal{B}$-operator on $K(b)$ (a possible value of such an operator on $x^{1/2}$ must have $y^{1/4}$ on the second coordinate).
\end{example}
Hence, both the existence proof and the non-existence proof break down for $\mathcal{B}$ from Example \ref{twistex}(1).

\subsection{Quantifier elimination and stability}\label{secstable}
In this subsection, we quickly discuss how the arguments regarding stability from \cite{BHKK} and \cite{K2} generalize smoothly to the case of $\mathcal{B}$-fields. Throughout this section we assume that $\mathcal{B}$ satisfies Assumption \ref{ass2} and thus by Theorem \ref{mainthm} the theory $\mathcal{B}-\cf$ exists.

We fix a monster model $\left( \mathfrak{C}, \partial \right)\models \bdcf$. Let $L^\lambda_\mathcal{B}$ denote the language of $\mathcal{B}$-fields extended by function symbols for $\lambda$-functions (for the definition of $\lambda$-functions, see e.g. \cite[1.9]{Cha02}).

\begin{theorem}\label{qe}
The theory $\bdcf$ has quantifier elimination in the language $L^\lambda_\mathcal{B}$.
\end{theorem}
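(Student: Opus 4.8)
The plan is to prove quantifier elimination by a standard back-and-forth argument in the language $L^\lambda_\mathcal{B}$, exploiting the fact that $\lambda$-functions allow every substructure to be taken to be a perfect $\mathcal{B}$-field. The key model-theoretic reduction is the well-known criterion: to establish quantifier elimination for $\bdcf$ in $L^\lambda_\mathcal{B}$, it suffices to show that whenever $(\mathfrak{C}_1,\partial_1)$ and $(\mathfrak{C}_2,\partial_2)$ are two sufficiently saturated models of $\bdcf$, $A$ is a common $L^\lambda_\mathcal{B}$-substructure, and $c\in \mathfrak{C}_1$, then there is a partial $L^\lambda_\mathcal{B}$-elementary map extending $\id_A$ and sending $c$ into $\mathfrak{C}_2$. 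Since we work inside a single monster model, the cleaner formulation is: any $L^\lambda_\mathcal{B}$-isomorphism between small $L^\lambda_\mathcal{B}$-substructures of $\mathfrak{C}$ extends to an automorphism, equivalently is elementary.

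First I would analyze what an $L^\lambda_\mathcal{B}$-substructure of $(\mathfrak{C},\partial)$ is. Because the $\lambda$-functions are present, any such substructure $A$ is closed under $p$-th roots within separable closures in the appropriate sense, so its underlying field is perfect; because $A$ is closed under the $e-1$ operator symbols, $\partial$ restricts to a $\mathcal{B}$-operator on $A$. By Lemma \ref{ver28}, Assumption \ref{ass2} forces $A^p\subseteq A^\partial$, and combined with perfectness this gives good control: $A$ is a perfect $\mathcal{B}$-field and the $\mathcal{B}$-operator is determined on $A$ by its values on a separating transcendence basis. I would then take an abstract $L^\lambda_\mathcal{B}$-isomorphism $f\colon A\to A'$ between two such substructures and a point $c$, and aim to extend $f$ to include $c$ in its domain.

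The extension step is where the geometric axioms and the prolongation machinery from Section \ref{secprol} enter. Set $a$ to be a tuple generating the field-theoretic data of $c$ over $A$, let $V=\locus_A(a)$, and let $a'=(a,\bar a)$ record $\partial(a)$ so that $W=\locus_A(a')\subseteq \tau^\partial(V)$ by Lemma \ref{easylemma2}. The map $f$ transports this configuration to the $A'$ side. Using that $(\mathfrak{C}_2,\partial_2)$ is existentially closed (Theorem \ref{mainthm}) together with the geometric axioms for $\bdcf$, and checking the dominance hypotheses on $E\to W$ via Proposition \ref{kerprol} (whose equivalence between dominance of $\pi_E$ and the existence of a field extension carrying the extended $\mathcal{B}$-operator is exactly what makes the configuration realizable), I would realize a copy of $c$ over $A'$ with the matching $\mathcal{B}$-operator type. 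The realization is guaranteed by Corollary \ref{ver219}, which supplies a $K(b)$-rational point of $\pi_E^{-1}(b)$ precisely when the generic extension exists, and by Proposition \ref{ver215} to descend from an extension over $\Omega$ to one over the desired field. This produces the required extension of $f$.

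The hard part will be verifying that the substructure $A$ is genuinely captured by the field-and-prolongation data, i.e. that the presence of the $\lambda$-functions makes $A$ perfect and hence lets Lemma \ref{ver27}, Lemma \ref{ver29}, and Proposition \ref{ver215} apply without separability obstructions; this is the step where Assumption \ref{ass2} is indispensable, since it is what guarantees (via $\ker(\pi)^e=0$ and Lemma \ref{ver28}) that $\mathcal{B}$-operators extend uniquely along étale and separable maps and descend correctly through the tower $K\subseteq K(a')\subseteq\Omega$. Once this is in place, the back-and-forth closes formally, exactly as in the corresponding arguments of \cite{BHKK} and \cite{K2}, and the theorem follows; I would accordingly present the field-theoretic bookkeeping in detail and refer to those sources for the purely formal back-and-forth conclusion.
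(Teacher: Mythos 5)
There is a genuine gap, and it sits exactly where you locate the ``hard part'' of your argument. You claim that closure under the $\lambda$-functions forces the underlying field of an $L^\lambda_\mathcal{B}$-substructure $A$ to be \emph{perfect}. This is false: closure under $\lambda$-functions guarantees that the extension $A\subseteq \mathfrak{C}$ is \emph{separable}, not that $A$ is perfect (indeed $\mathfrak{C}$ itself is a substructure, and by Lemma \ref{imperf} it is separably closed of infinite imperfection degree, hence very far from perfect). Worse, your claim is self-defeating in the presence of Assumption \ref{ass2}: by Lemma \ref{ver28} we have $A^p\subseteq A^{\partial}$, so if $A$ were perfect then $A=A^p\subseteq A^{\partial}$, i.e.\ the operator on every substructure would be the trivial one $\iota\circ\partial_0$. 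Had that been the case, there would be nothing to amalgamate and the theorem would be vacuous; the fact that you invoke perfectness ``combined with $A^p\subseteq A^\partial$'' as giving ``good control'' shows the error propagates into the core of your extension step. The correct statement, which the paper's proof uses, is that $L^\lambda_\mathcal{B}$-substructures are $\mathcal{B}$-subfields $A$ with $A\subseteq\mathfrak{C}$ separable, and it is this separability (not perfectness) that feeds into Lemma \ref{ver27}.

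Beyond this, your route is also structurally misdirected. The paper does not run a back-and-forth through the geometric axioms at all: it observes that $\bdcf$, being a model companion, is model complete, so quantifier elimination reduces to the \emph{amalgamation property} for $\mathcal{B}$-fields regarded as $L^\lambda_\mathcal{B}$-structures. That amalgamation is pure algebra: given $K\subseteq K_1,K_2$ with both extensions separable, Lemma \ref{ver27}(2) (unique lifting along \'{e}tale extensions) reduces to the linearly disjoint case, and then Proposition \ref{ver220} together with Lemma \ref{ver27}(1) puts a unique compatible $\mathcal{B}$-field structure on $\left(K_1\otimes_K K_2\right)_0$. Your detour through Proposition \ref{kerprol}, Corollary \ref{ver219} and the axioms of $\bdcf$ does not avoid this: in the one-point extension step you must realize the quantifier-free type of $c$ over the image of $A$, and the axioms only produce \emph{some} point of $V$ whose image under $\partial_V$ lies in $W$, not a generic one; to get the generic realization you would again need to amalgamate $A\langle c\rangle$ with the target model over $A$ and then use existential closedness plus saturation. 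So amalgamation is the irreducible content either way, and your proposal neither proves it nor can substitute for it with the prolongation machinery.
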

\begin{proof}
The theory $\mathcal{B}-\cf$ is the model companion of the theory of $\mathcal{B}$-fields, in particular it is model complete. Thus, it is enough to prove that $\mathcal{B}-\cf$ has the amalgamation property in the language $L^\lambda_\mathcal{B}$, which we show as in \cite{K2}.

We fix two extensions $K\subseteq K_1, K_2$ of $\mathcal{B}$-fields, regarded as $L^\lambda_\mathcal{B}$ structures (in particular, these extensions are separable). Since separable algebraic extensions are \'{e}tale, Lemma \ref{ver27}(2) assures that $\mathcal{B}$-operators on fields lift uniquely to any separable algebraic extensions, hence we may assume that $K_1$ and $K_2$ are linearly disjoint over $K$. Therefore we have:
$$K_1 K_2\cong \mathrm{Frac}\left( K_1 \otimes_K K_2 \right)$$
and the extensions $K_1, K_2 \subseteq K_1 K_2$ are separable. Using Lemma \ref{ver27}(1) and Lemma \ref{ver220}, we get that $K_1 K_2\cong\mathrm{Frac}\left( K_1 \otimes_K K_2 \right)$ has a unique $\mathcal{B}$-field structure extending that of $K_1$ and $K_2$, hence $K_1 K_2$ is an amalgam of the $\mathcal{B}$-fields $K_1$ and $K_2$ in the language $L^\lambda_\mathcal{B}$.
\end{proof}

\begin{remark}
For $\mathcal{B}=B_\otimes$, adding just the inverse of the Frobenius map suffices to get quantifier elimination for $\mathcal{B}-\cf$, which was proven in \cite{BHKK}. During the review process of this paper, the first author showed quantifier elimination results regarding the language with the inverse of the Frobenius map, which can be applied to the theories $\mathcal{B}-\cf$ (see \cite[Theorem 2.8]{Gog1})
\end{remark}

\begin{lemma}\label{imperf}
The field $\mathfrak{C}$ is separably closed. If $\ka$ is perfect, then $\mathfrak{C}$ has infinite imperfection degree.
\end{lemma}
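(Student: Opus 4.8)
The plan is to prove the two assertions separately, both by exploiting that $\mathfrak{C}$ is existentially closed as a $\mathcal{B}$-field (by Theorem \ref{mainthm}) together with the extension lemmas from Section \ref{secoper}. For separable closedness, I would argue by contradiction: suppose $f \in \mathfrak{C}[X]$ is a separable polynomial without a root in $\mathfrak{C}$. Then $\mathfrak{C} \subseteq \mathfrak{C}(\alpha)$ is a finite separable, hence \'{e}tale, extension for any root $\alpha$ of $f$. By Lemma \ref{ver27}(2) the operator $\partial$ extends \emph{uniquely} to a $\mathcal{B}$-operator on $\mathfrak{C}(\alpha)$, producing a proper $\mathcal{B}$-field extension of $\mathfrak{C}$ in which $f$ acquires a root. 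Since $\mathfrak{C}$ is existentially closed, the existential statement ``$\exists x\ f(x)=0$'' must already hold in $\mathfrak{C}$, a contradiction. Thus every separable polynomial splits and $\mathfrak{C}$ is separably closed.

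For the second assertion, assume $\ka$ is perfect and suppose toward a contradiction that $\mathfrak{C}$ has finite imperfection degree, i.e. $[\mathfrak{C}:\mathfrak{C}^p]=p^d$ for some $d<\infty$. The strategy is to build a $\mathcal{B}$-field extension of $\mathfrak{C}$ that is not embeddable back, contradicting existential closedness; concretely I would exhibit an element over which $\partial$ extends but whose $p$-th root cannot be adjoined inside $\mathfrak{C}$. The key tool is Lemma \ref{ver29}: if $\ka \subseteq M \subseteq N$ is a tower with $N^p \subseteq M^{\partial}$, then $\partial$ extends to $N$. I would take a fresh transcendental $t$ over $\mathfrak{C}$, set the operator to make $t$ a constant (using the zero $\mathcal{B}$-operator structure from Example \ref{bopex}(1) on the relevant coordinates), and then adjoin $t^{1/p}$. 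Since $t \in \mathfrak{C}(t)^{\partial}$ by construction, Lemma \ref{ver29} supplies a $\mathcal{B}$-operator on $\mathfrak{C}(t^{1/p})$. This yields a $\mathcal{B}$-field extension whose imperfection degree has strictly grown, and iterating (or passing to an elementary extension and using existential closedness) forces $\mathfrak{C}$ to already realize infinitely many $p$-independent elements, i.e. to have infinite imperfection degree.

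The delicate point I expect to be the main obstacle is the second part: separable closedness alone does \emph{not} bound the imperfection degree, so I must genuinely produce $p$-independent transcendentals and verify that the extension of $\partial$ I construct is compatible with $\mathfrak{C}$ being existentially closed. The subtlety is that existential closedness is phrased via the geometric axioms for $\mathcal{B}\text{-}\dcf$, so rather than directly embedding an abstract extension I would reformulate the statement ``$t$ has a $p$-th root'' as a suitable pair of varieties $(V,W)$ with $W \subseteq \tau^{\partial}(V)$ and check that the dominance hypotheses in the axioms are met, precisely so that the required point already exists in $\mathfrak{C}$. The perfectness of $\ka$ matters here because it guarantees (via the classification, Theorem \ref{classop}, and Theorem \ref{dime}) that $\mathcal{B}(K)$ behaves well along perfect subfields and that the constant field computations do not introduce unexpected inseparability defects. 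Once the extension lemma is packaged into the geometric axiom framework, the infinitude of the imperfection degree follows by producing, for each $n$, an $n$-tuple of $p$-independent elements as above.
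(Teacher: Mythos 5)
Your proof of the first assertion is correct and is exactly the paper's argument: separable algebraic extensions are \'{e}tale, so Lemma \ref{ver27}(2) lifts $\partial$ uniquely to such an extension, and existential closedness (via Theorem \ref{mainthm}) pulls the root back into $\mathfrak{C}$.

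The second assertion is where your proposal has a genuine gap, and it is not a technicality. The extension you build is fine: one can indeed extend $\partial$ to $\mathfrak{C}(t)$ making $t$ a constant and then, by Lemmas \ref{ver28} and \ref{ver29}, to $\mathfrak{C}(t^{1/p})$, obtaining a $\mathcal{B}$-field extension of strictly larger imperfection degree. What fails is the transfer back: existential closedness only pulls \emph{existential} formulas (with parameters in $\mathfrak{C}$) from extensions down to $\mathfrak{C}$, whereas ``there exist $n$ elements linearly independent over the $p$-th powers'' is an $\exists\forall$ statement --- $p$-independence of a tuple is the \emph{negation} of an existential condition, and it is not even preserved upward under field extensions (elements $p$-independent in a small extension can become $p$-th powers inside $\mathfrak{C}$). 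Your argument scheme, applied verbatim to the class of pure fields, would ``prove'' that algebraically closed fields have infinite imperfection degree, which is false since they are perfect; so some further idea is indispensable. Recasting ``$t$ has a $p$-th root'' through a pair $(V,W)$ and the geometric axioms does not supply that idea, because the axioms also only produce points, i.e.\ positive existential information, while what you need is an existentially expressible \emph{certificate} that given elements of $\mathfrak{C}$ admit no $p$-dependence relation. Note also that making $t$ a constant goes in exactly the wrong direction: by Lemma \ref{ver28}, constants are precisely the elements that can consistently acquire $p$-th roots in $\mathcal{B}$-field extensions.

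That missing certificate is what the paper's proof extracts from the operator itself, using the classification (this is where perfectness of $\ka$ is genuinely used). Take $n_2,\dotsc,n_e$ as in Theorem \ref{classop}. If all $n_i=0$, then $\mathcal{B}$ is of the form $B_{\otimes}$ and the statement is \cite[Remark 4.12]{BHKK}. Otherwise, say $n_2>0$; setting $q=p^{n_2}$ and $\delta=\partial_2$, Lemma \ref{ver28} together with Theorem \ref{classop} yield the semilinearity rule $\delta(\alpha a)=\alpha^{q}\delta(a)$ for $\alpha\in\mathfrak{C}^p$ and $a\in\mathfrak{C}$, and one repeats the proof of \cite[Theorem 2.2 (i)]{K2}: existential closedness gives surjectivity-type (existential) properties of $\delta$, which a dimension count over $\mathfrak{C}^p$ shows to be incompatible with $[\mathfrak{C}:\mathfrak{C}^p]$ being finite. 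In both cases the operator witnesses independence over $\mathfrak{C}^p$ in an existentially certified way; this is the idea absent from your proposal.
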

\begin{proof}
Separable closedness follows from Lemma \ref{ver27}(2) and the fact that separable algebraic extensions are \'{e}tale.

Let us assume that $\ka$ is perfect and take $n_2, \dotsc , n_e$ as in Theorem \ref{classop}. If all $n_i=0$, then the second assertion follows from \cite[Remark 4.12]{BHKK}. Otherwise, we may proceed as in \cite{K2}. Without loss of generality, we assume  that $n_2>0$ and we set $q= p^{n_2}$ and $\delta = \partial_2$. Then, we have the same notation as in the proof of \cite[Theorem 2.2 (i)]{K2}. In order to repeat this proof, we just need to know that
$$\delta\left( \alpha a \right) = \alpha^{ q}\delta\left( a \right)$$
for any $\alpha\in\mathfrak{C}^p$ and $a\in\mathfrak{C}$, which follows from Lemma \ref{ver28} and Theorem \ref{classop}.
\end{proof}
\begin{remark}
It is not clear for us what happens in the case when the base field $\ka$ is not perfect. Using Lemma \ref{imperf}, we can show that $\mathfrak{C}$ still has infinite imperfection degree in the case when $\mathfrak{C}$ contains $\ka^{\mathrm{ins}}$, the perfect closure of $\ka$, and additionally when the $\mathcal{B}$-operator on $\mathfrak{C}$ is ``the zero $\mathcal{B}$-operator'' (as in Example \ref{bopex}(1)) after restricting it to $\ka^{\mathrm{ins}}$. We believe that $\mathfrak{C}$ always has infinite imperfection degree, but we do not know how to show it.
\end{remark}
We denote the forking independence in $\mathfrak{C}$, considered as a separably closed field, by $\ind^{\scf}$. Similarly for types, algebraic closure, definable closure and groups of automorphisms, e.g. we use the notation $\acl^{\scf}$. On the other hand, $\acl^{\bdcf}$ denotes the algebraic closure computed in the $\mathcal{B}$-field $(\mathfrak{C},\partial)$. We skip the proof of the next result, since the proof of \cite[Lemma 4.14]{BHKK} can be repeated verbatim.
\begin{lemma}\label{acl}
For any small subset $A$ of $\mathfrak{C}$, we have:
$$\acl^{\bdcf}(A)=\acl^{\scf}(\langle A\rangle_\mathcal{B}),$$
where $\langle A\rangle_\mathcal{B}$ denotes the $\mathcal{B}$-subfield of $\mathfrak{C}$ generated by $A$.
\end{lemma}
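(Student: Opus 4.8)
The plan is to prove the two inclusions separately, writing $K:=\langle A\rangle_\mathcal{B}$ and $L:=\acl^{\scf}(K)$ throughout, and to reduce the nontrivial direction to the statement that a field element transcendental over $L$ has infinitely many $\mathcal{B}$-conjugates over $A$.

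For the inclusion $\acl^{\scf}(K)\subseteq \acl^{\bdcf}(A)$, I would first observe that $K\subseteq \dcl^{\bdcf}(A)$, since $\langle A\rangle_\mathcal{B}$ is obtained from $A$ by applying the field operations and the $e-1$ unary symbols of $L_{\mathcal{B}}$ (by Remark \ref{mscontext}(2)), all of which belong to the language. Now an element $c\in \acl^{\scf}(K)$ is field-algebraic over $K$ (a transcendental has infinitely many $\scf$-conjugates in $\mathfrak{C}$, so cannot lie in $\acl^{\scf}(K)$), hence $c$ is a root of some nonzero $f\in K[X]$. Any automorphism of the $\mathcal{B}$-field $\mathfrak{C}$ fixing $A$ pointwise fixes $K$ pointwise and therefore permutes the finitely many roots of $f$ lying in $\mathfrak{C}$; thus the orbit of $c$ is finite and $c\in \acl^{\bdcf}(A)$.

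For the reverse inclusion, I would first record that $L=\acl^{\scf}(K)$ is relatively algebraically closed in $\mathfrak{C}$: it is closed under separable algebraic extensions by definition, and it is closed under the (unique) $p$-th roots that exist in $\mathfrak{C}$, because such a root is fixed by every element of $\aut(\mathfrak{C}^{\scf}/L)$ and so lies in $\dcl^{\scf}(L)\subseteq L$. By Lemma \ref{ver27}(2) the $\mathcal{B}$-operator on $K$ lifts uniquely along the separable algebraic extension $K\subseteq L$, so $L$ is canonically a $\mathcal{B}$-subfield of $\mathfrak{C}$, and $L\subseteq \acl^{\bdcf}(A)$ by the first inclusion. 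It then suffices to show that every $c\in \mathfrak{C}\setminus L$ lies outside $\acl^{\bdcf}(A)$. By the previous remark such a $c$ is transcendental over $L$. Using Lemma \ref{ver27}(1) I can extend $\partial$ from $L$ to a $\mathcal{B}$-operator on $L(c)$ (defining it on the polynomial ring $L[c]$ by a free choice of the operator-coordinates of $c$, then passing to the fraction field). Taking countably many algebraically independent copies $c_1,c_2,\dotsc$ over $L$ and iterating Lemma \ref{ver220}, I obtain a single $\mathcal{B}$-field $L(c_1,c_2,\dotsc)$ together with $\mathcal{B}$-field isomorphisms over $L$ sending $c\mapsto c_i$; these should be $L^\lambda_\mathcal{B}$-isomorphisms once the $\lambda$-function (i.e. $p$-basis) data is matched, which is possible because all the $c_i$ are generic transcendentals. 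Since $\mathfrak{C}$ is existentially closed (Theorem \ref{mainthm}), this $\mathcal{B}$-field embeds over $L$ into $\mathfrak{C}$, and by quantifier elimination (Theorem \ref{qe}) the images of the $c_i$ are distinct realizations of $\tp^{\bdcf}(c/L)$. Hence $c$ has infinitely many $\mathcal{B}$-conjugates over $A$, so $c\notin \acl^{\bdcf}(A)$, as required.

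The main obstacle is the last step of the reverse inclusion: guaranteeing that the abstract $\mathcal{B}$-field extension carrying the copies $c_i$ embeds into $\mathfrak{C}$ as an $L^\lambda_\mathcal{B}$-structure, so that quantifier elimination genuinely transports $\bdcf$-types. This is precisely where the interaction between the operator and the $\lambda$-functions must be controlled, using separability of the relevant extensions together with the constraint $R^p\subseteq R^{\partial}$ from Lemma \ref{ver28} and the shape of $\Theta$ from Theorem \ref{classop} (as in the proof of Lemma \ref{imperf}); keeping the $p$-basis structure compatible under the transport is the delicate point, and it is exactly the ingredient that the cited proof of \cite[Lemma 4.14]{BHKK} supplies.
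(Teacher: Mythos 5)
Your first inclusion is fine, and your overall plan for the reverse one (reduce to an element transcendental over $L$, manufacture many copies by tensor-product amalgamation via Proposition \ref{ver220}, embed them using existential closedness, and transport types by quantifier elimination) is indeed the plan of the proof the paper relies on --- the paper gives no argument itself, deferring to \cite[Lemma 4.14]{BHKK}. But your execution has two genuine gaps. The first is the step ``$K\subseteq L$ is separable algebraic, so $L$ is a $\mathcal{B}$-subfield by Lemma \ref{ver27}(2)''. If $\langle A\rangle_\mathcal{B}$ is read literally as the smallest $\partial$-closed subfield, this is false ($L$ contains the $p$-th roots of elements of $K$ that exist in $\mathfrak{C}$, and these are purely inseparable over $K$), and in fact the lemma itself is then false: for derivations of Frobenius (Example \ref{firstex}(2)) take $c,u$ algebraically independent over $\Ff_p$ with $\partial_2(c)=u$, $\partial_2(u)=0$ (such a $\mathcal{B}$-field exists and embeds into $\mathfrak{C}$), and let $A=\{a\}$ with $a:=c^p$. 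Then $\partial_2(a)=0$, so $\langle a\rangle_\mathcal{B}=\Ff_p(a)$; but $c$ is the \emph{unique} $p$-th root of $a$ in $\mathfrak{C}$, so $c\in\dcl^{\bdcf}(a)$ and hence $u=\partial_2(c)\in\dcl^{\bdcf}(a)$, while $u$ is transcendental over $\Ff_p(a)$ and so $u\notin\acl^{\scf}(\langle a\rangle_\mathcal{B})$. The statement is true --- and your step repairable --- only when $\langle A\rangle_\mathcal{B}$ is the $L^\lambda_\mathcal{B}$-substructure generated by $A$, i.e.\ closed under the $\lambda$-functions as well as the operator (this is the setting of \cite{BHKK}); then $\mathfrak{C}/K$ is separable, so every element of $\mathfrak{C}$ algebraic over $K$ is separably algebraic over $K$, the extension $K\subseteq L$ is separable algebraic, and Lemma \ref{ver27}(2) applies. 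Your write-up conceals exactly the point where the $\lambda$-functions enter the statement.

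The second gap is that your witnesses realize the wrong type. You extend $\partial$ to $L[c]$ by a \emph{free} choice of the operator-coordinates of $c$ and amalgamate copies of this free extension. By quantifier elimination, however, $\tp^{\bdcf}(c/L)$ is determined by the $L^\lambda_\mathcal{B}$-isomorphism type over $L$ of the substructure of $\mathfrak{C}$ generated by $L\cup\{c\}$, which records the \emph{actual} values $\partial(c),\partial^{(2)}(c),\dotsc$ and their algebraic relations; these are not free in general. If, say, $c$ is a transcendental constant ($\partial_2(c)=0$; such elements exist in any model), then your copies $c_i$ have $\partial_2(c_i)$ transcendental over $L(c_i)$, so $\tp^{\bdcf}(c_i/L)\neq\tp^{\bdcf}(c/L)$, and your construction produces no realization of $\tp^{\bdcf}(c/L)$ beyond $c$ itself --- so nothing follows about $c\notin\acl^{\bdcf}(A)$. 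The repair is to amalgamate copies of the actual generated substructure $M=\langle L\cup\{c\}\rangle$: since $c$ is transcendental over $L$, $L$ is relatively algebraically closed in $\mathfrak{C}$, and $\mathfrak{C}/L$ is separable, the extension $M/L$ is regular, so the iterated tensor products of Proposition \ref{ver220} are domains and the copies of $c$ are pairwise distinct; the remaining $\lambda$-compatibility of the embedding into $\mathfrak{C}$, which you rightly flag as the delicate point, is precisely what the lemmas of \cite{BHKK} that the paper invokes are for.
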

We proceed now towards a description of the forking independence in the theory $\bdcf$, which corresponds exactly to the description from \cite{BHKK}.
We define the following ternary relation on small subsets $A$, $B$, $C$ of $\mathfrak{C}$:
$$A\ind^{\bdcf}_C B\qquad\iff\qquad\acl^{\bdcf}(A)\ind^{\scf}_{\acl^{\bdcf}(C)}\acl^{\bdcf}(B).$$
We point out that the ternary relation $\ind^{\bdcf}$ defined above satisfies the well-known properties of forking in stable theories, which are listed above \cite[Lemma 4.5]{BHKK}.
The proofs of the first 6 properties (invariance, symmetry, monotonicity and transitivity, existence, local character, and finite character) are the same as in \cite{BHKK} (using Lemma \ref{acl}). We need to apply some minor changes to the argument from \cite{BHKK} for the proof of the last property (uniqueness over a model), since the language used in this paper differs from the language used in \cite{BHKK}.
\begin{lemma}
Any complete type over a model of $\bdcf$ is stationary.
\end{lemma}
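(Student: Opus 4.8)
The plan is to establish the only outstanding independence property, \emph{uniqueness over a model}: a complete type $p\in S(M)$ (in the language $L^\lambda_\mathcal{B}$) over a model $M\models\bdcf$ should have a unique nonforking extension to any $C\supseteq M$. Combined with the six properties already recorded, this yields stationarity. I would take $C$ to be $\acl^{\bdcf}$-closed and restate the goal concretely: whenever $a,a'$ realize $p$ and satisfy $a\ind^{\bdcf}_M C$ and $a'\ind^{\bdcf}_M C$, one has $\tp(a/C)=\tp(a'/C)$. By quantifier elimination (Theorem \ref{qe}) together with Lemma \ref{acl}, a type over a set is determined by the $L^\lambda_\mathcal{B}$-isomorphism type of the $\acl^{\bdcf}$-closure it generates, so it suffices to build a $\mathcal{B}$-field isomorphism over $C$ carrying $a$ to $a'$.

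First I would set $A=\acl^{\bdcf}(Ma)$ and $A'=\acl^{\bdcf}(Ma')$. Since $\tp(a/M)=\tp(a'/M)$, there is a $\mathcal{B}$-field isomorphism $\phi\colon A\to A'$ fixing $M$ pointwise with $\phi(a)=a'$. Unwinding the definition of $\ind^{\bdcf}$, the hypotheses become $A\ind^{\scf}_M C$ and $A'\ind^{\scf}_M C$ inside the separably closed field $\mathfrak{C}$ (note $M=\acl^{\bdcf}(M)$, as $M\preceq\mathfrak{C}$ is a model). Now $M$ is a model of $\scf$ in the $\lambda$-language, and $\scf$ of the imperfection degree of $\mathfrak{C}$ is stable with stationary types over models; hence the field isomorphism underlying $\phi$ extends to an $\scf$-elementary field isomorphism $\Phi$ over $C$ between $\acl^{\scf}(AC)=\acl^{\bdcf}(MaC)$ and $\acl^{\scf}(A'C)=\acl^{\bdcf}(Ma'C)$, fixing $C$.

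The key step is then to promote $\Phi$ from a field isomorphism to a $\mathcal{B}$-field isomorphism, and here I would invoke uniqueness of $\mathcal{B}$-operators on independent amalgams. By $\scf$-independence, $A$ and $C$ are linearly disjoint over $M$, so Proposition \ref{ver220} equips the relevant quotient of $A\otimes_M C$ with a unique $\mathcal{B}$-operator extending those of $A$ and $C$; passing to its fraction field and then to $\acl^{\scf}(AC)$ via Lemma \ref{ver27} (the separable, i.e. étale, steps) and Lemma \ref{ver29} together with Lemma \ref{ver28} (the purely inseparable steps, available since Assumption \ref{ass2} gives $R^p\subseteq R^{\partial}$), the $\mathcal{B}$-operator on $\acl^{\bdcf}(MaC)$ is the \emph{unique} one restricting to $\partial$ on $A$ and on $C$. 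The same analysis applies on the $a'$-side. Since $\Phi$ restricts to the $\mathcal{B}$-isomorphism $\phi$ on $A$ and to the identity on $C$, both $\Phi\circ\partial$ and $\partial\circ\Phi$ are $\mathcal{B}$-operators on $\acl^{\bdcf}(MaC)$ agreeing on $A$ and on $C$; uniqueness forces them to coincide, so $\Phi$ commutes with $\partial$. Thus $\Phi$ is a $\mathcal{B}$-field isomorphism over $C$ sending $a$ to $a'$, and $\tp(a/C)=\tp(a'/C)$ follows.

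I expect the main obstacle to be precisely this final gluing: checking that the field-theoretic nonforking extension handed to us by stationarity in $\scf$ is automatically compatible with the operator, which rests entirely on the uniqueness of $\mathcal{B}$-operators on the separable and purely inseparable closure of the free amalgam. The delicate points to verify are that $\acl^{\bdcf}(MaC)$ is genuinely generated from $A$ and $C$ by a chain of extensions to which Lemmas \ref{ver27}--\ref{ver29} apply, and that Assumption \ref{ass2} remains in force so that the inseparable part carries a canonical $\mathcal{B}$-operator at all.
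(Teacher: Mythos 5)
Your strategy is essentially workable and, at its core, parallel to the paper's: reduce stationarity to producing a $\mathcal{B}$-field isomorphism over the enlarged base carrying $a$ to $a'$, then conclude by quantifier elimination (Theorem \ref{qe}). The paper builds that isomorphism directly on the compositum (arguing as in \cite{BHKK}, via the uniqueness in Proposition \ref{ver220} and Lemma \ref{ver27}(1)) and extends it by QE, whereas you first extract a field isomorphism $\Phi$ from stationarity in $\scf$ and then try to promote it to a $\mathcal{B}$-isomorphism by a uniqueness argument. The gap lies in that promotion step. You claim that the $\mathcal{B}$-operator on $\acl^{\bdcf}(MaC)$ restricting to $\partial$ on $A$ and on $C$ is unique, and for the ``purely inseparable steps'' you cite Lemma \ref{ver29} together with Lemma \ref{ver28}. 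But Lemma \ref{ver29} is an existence statement only, and uniqueness genuinely fails across purely inseparable extensions: if $t\in F^{\partial}$ and $t^{1/p}\notin F$, then for \emph{every} $v\in\ker\left(\pi_{F(t^{1/p})}\right)$ the assignment $t^{1/p}\mapsto\iota(t^{1/p})+v$ defines a $\mathcal{B}$-operator on $F(t^{1/p})\cong F[X]/(X^p-t)$ extending $\partial|_F$, since under Assumption \ref{ass2} we have $\left(\iota(t^{1/p})+v\right)^p=\iota(t)+v^p=\iota(t)$ and $\pi\left(\iota(t^{1/p})+v\right)=t^{1/p}$. (For $e\geq 2$ this yields many distinct extensions; it is the familiar fact that derivations extend non-uniquely to purely inseparable extensions.) So if reaching $\acl^{\scf}(AC)$ from $A$ and $C$ involved any inseparable step, your conclusion that $\Phi$ commutes with $\partial$ would not follow.

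The missing ingredient --- and the one the paper actually invokes --- is that no inseparable step occurs at all: by the description of forking in $\scf$ and \cite[Lemma 4.13]{BHKK}, the independence $A\ind^{\scf}_M C$ of the $\acl^{\bdcf}$-closed sets $A$ and $C$ forces the compositum $AC$ to be separable in $\mathfrak{C}$, i.e.\ closed under the $\lambda$-functions. Granting this, $\acl^{\scf}(AC)$ is just the separable algebraic closure of $AC$ inside $\mathfrak{C}$, and uniqueness does hold along every step you need: on $A\otimes_M C$ by Proposition \ref{ver220}, on its fraction field $AC$ by Lemma \ref{ver27}(1), and on separable algebraic extensions by Lemma \ref{ver27}(2). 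You did flag as delicate the question of whether $\acl^{\bdcf}(MaC)$ is ``generated from $A$ and $C$ by a chain of extensions to which Lemmas \ref{ver27}--\ref{ver29} apply,'' but the tools you propose there cannot close that point: what is needed is not an inseparable-extension lemma but the separability of the compositum, which comes from the $\scf$-independence itself. With that fact inserted, your argument goes through as a legitimate (slightly more roundabout) variant of the paper's proof.
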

\begin{proof}
Let us take a $\mathcal{B}$-elementary subfield $K$ of $\mathfrak{C}$, an elementary extension of $B$-fields $K\preccurlyeq M$ (inside $\mathfrak{C}$), and $a,b\in \mathfrak{C}$. We assume that:
$$\tp^\mathcal{B}\left(a/K\right)=\tp^\mathcal{B}(b/K),\ \ \ \ \ \ \ \ \ \  a\ind^{\bdcf}_K M ,\ \ \ \ \ \ \ \ \ \  b\ind^{\bdcf}_K M.$$
Our aim is to show that $\tp^\mathcal{B}(a/M)=\tp^\mathcal{B}(b/M)$. For the reader's convenience, we repeat here the corresponding argument from
the proof of \cite[Lemma 4.16]{BHKK}.

Let $f\in\aut^{\bdcf}(\mathfrak{C}/K)$ be such that $f(a)=b$. Let us define:
$$K_a:=\dcl^{\bdcf}(Ka),\ \ \ \ \ K_b:=\dcl^{\bdcf}(Kb).$$
Then $f$ can be considered as a $\mathcal{B}$-isomorphism between $(K_a,\partial)$ and $(K_b,\partial)$. The field extension $K\subseteq M$ is regular, so we get that the tensor products
$K_a\otimes_K M$ and $K_b\otimes_K M$ are domains and the map
$$f|_{K_a}\otimes\id_M:K_a\otimes_K M\to K_b\otimes_K M$$
is a $\mathcal{B}$-isomorphism, which extends to the isomorphism $\widetilde{f}$ of the fields of fractions. The regularity of $K\subseteq M$ and the algebraic disjointness of $K_a$ with $M$ over $K$ gives the linear disjointness of $K_a$ and $M$ over $K$, hence there exists a $\mathcal{B}$-isomorphism $f_a$ from $K_aM$ to the field of fractions of $K_a\otimes_K M$ taking $a$ to $a\otimes 1$.
Similarly, there exists a $\mathcal{B}$-isomorphism $f_b$ from $K_bM$ to the field of fractions of $K_b\otimes_K M$ taking $b$ to $b\otimes 1$.
Composing the $\mathcal{B}$-isomorphisms $f_a$, $\widetilde{f}$, and $f_b$ gives us a $\mathcal{B}$-isomorphism $h:K_aM\to K_bM$ such that $h(a)=b$.

The fields $K, K_a, K_b$, and $M$ are definably closed subsets of $\mathfrak{C}$, so in particular they are closed under $\lambda$-functions, thus $K\subseteq K_a\subseteq \mathfrak{C}$, $K\subseteq K_b\subseteq \mathfrak{C}$, and $K\subseteq M\subseteq \mathfrak{C}$ are separable extensions. Using the description of forking independence in $\scf$, the definition of $\ind^{\bdcf}$, and \cite[Lemma 4.13]{BHKK}, we conclude that the extensions $K_aM\subseteq\mathfrak{C}$ and $K_bM\subseteq\mathfrak{C}$ are separable. Therefore, $K_aM$ and $K_bM$ are $L_{\lambda}^\mathcal{B}$-substructures of $\mathfrak{C}$. Since the theory $\bdcf$ has quantifier elimination in the language $L^\lambda_\mathcal{B}$ (see Theorem \ref{qe}), the isomorphism $h$ is elementary and there exists $\hat{h}\in\aut^{\bdcf}(\mathfrak{C}/K)$ such that $\hat{h}|_{K_aM}=h$, hence $\tp^\mathcal{B}(a/M)=\tp^\mathcal{B}(b/M)$.
\end{proof}
Using the above properties, we get the following final result in the same way as in  \cite{BHKK}.
\begin{theorem}
The theory $\mathcal{B}-\cf$ is stable, not superstable and the relation $\ind^{\bdcf}$ coincides with the forking independence.
\end{theorem}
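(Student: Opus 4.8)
The plan is to obtain stability together with the identification of $\ind^{\bdcf}$ with forking from a single abstract criterion, and then to derive non-superstability from the geometry of the underlying field.

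First I would assemble the properties of $\ind^{\bdcf}$ already verified above --- invariance, symmetry, monotonicity and transitivity, existence, local character, finite character, together with stationarity over models established in the preceding lemma --- and feed them into the standard converse characterization of forking: any complete theory carrying an $\aut$-invariant ternary independence relation with exactly these properties is stable, and that relation necessarily coincides with non-forking independence (this is the route taken in \cite{BHKK}). Applying this criterion to $\ind^{\bdcf}$ immediately yields both that $\bdcf$ is stable and that $\ind^{\bdcf}$ is forking independence, with no further computation.

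For the failure of superstability I would pass to the reduct of $\bdcf$ to the pure ring language. By Lemma \ref{imperf}, the monster model $\mathfrak{C}$ is separably closed of infinite imperfection degree, so this reduct is a model of the theory of separably closed fields of characteristic $p$ and infinite imperfection degree, the prototypical stable, non-superstable theory. Non-superstability then transfers upward by type counting: for any parameter set $A$, every $\scf$-type over $A$ is the restriction of some $\bdcf$-type, so $|S^{\bdcf}(A)|\geqslant |S^{\scf}(A)|$; choosing $A$ in a suitably large cardinal $\lambda$ with $\lambda^{\aleph_0}>\lambda$ witnessing instability of the reduct forces $\bdcf$ to be unstable in $\lambda$ as well, hence not superstable. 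Combined with the previous paragraph, this settles all three assertions.

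The heavy lifting has all been done in the preceding lemmas, so the only real content here is the correct invocation of the abstract criterion; the subtlest point is that it must be applied in a form valid for merely stable (not necessarily superstable) theories, and that Lemma \ref{imperf} --- the infinite imperfection degree --- is precisely the feature that both underlies the local-character computation and drives the failure of superstability through the $\scf$ reduct. An alternative, more hands-on route to non-superstability would exhibit an explicit infinite forking chain: using Lemma \ref{ver28} one chooses a $p$-independent sequence among the $p$-th powers, which are constants of $\partial$, so that by Lemma \ref{acl} their $\bdcf$-independence reduces to $\scf$-independence and the classical $\scf$ forking chain lifts verbatim; I would keep this in reserve in case the type-counting transfer needs to be made effective.
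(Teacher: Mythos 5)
Your proposal is correct and follows essentially the same route as the paper, which simply invokes the previously verified properties of $\ind^{\bdcf}$ (via the standard abstract characterization of forking in stable theories, as in \cite{BHKK}) together with Lemma \ref{imperf} for non-superstability; your type-counting transfer through the $\scf$ reduct is exactly the argument hidden in that citation. The only cosmetic remark is that your phrase ``choosing $A$ in a suitably large cardinal $\lambda$'' should read ``choosing $A$ of size $\lambda$ for suitably large $\lambda$ with $\lambda^{\aleph_0}>\lambda$,'' but the substance is right.
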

\begin{proof}
Stability and the description of the forking relation follows from \cite[Fact 2.1.4]{Kimsim} and the properties of the relation $\ind^{\bdcf}$ shown above. Moreover, the models of $\mathcal{B}-\cf$ are not perfect by Lemma \ref{ver28}, since otherwise the corresponding $\mathcal{B}$-operators are ``the zero $\mathcal{B}$-operators'' (as in Example \ref{bopex}(1)) and they can not yield existentially closed $\mathcal{B}$-fields. Since an infinite superstable field is necessarily perfect (even algebraically closed), we get that the theory $\mathcal{B}-\cf$ is not superstable.
\end{proof}
As it was mentioned in \cite[Section 5.1]{BHKK}, finer model-theoretic results (as Zilber's trichotomy) are unknown even in the ``simplest'' case of the theory DCF$_p$, so they seem to be out of reach in the context of $\mathcal{B}$-operators.

\bibliographystyle{plain}
\bibliography{harvard}

\end{document}